\definecolor{mypink1}{rgb}{0.858, 0.188, 0.478}
\definecolor{mypink2}{RGB}{219, 48, 122}
\definecolor{mypink3}{cmyk}{0, 0.7808, 0.4429, 0.1412}
\definecolor{mygray}{gray}{0.6}
\definecolor{venetianred}{rgb}{0.78, 0.03, 0.08}
\definecolor{sapphire}{rgb}{0.03, 0.15, 0.4}
\definecolor{utahcrimson}{rgb}{0.83, 0.0, 0.25}
\definecolor{trueblue}{rgb}{0.0, 0.45, 0.81}
\definecolor{carminered}{rgb}{1.0, 0.0, 0.22}
\definecolor{cobalt}{rgb}{0.0, 0.28, 0.67}
\definecolor{cornflowerblue}{rgb}{0.39, 0.58, 0.93}
\numberwithin{equation}{section}
\newtheorem{theorem}{Theorem}[section]
\newtheorem{Definition}{Definition}[section]
\newtheorem{Remark}{Remark}[section]
\newtheorem{proposition}{Proposition}[section]
\newtheorem{Lemma}{Lemma}[section]
\newtheorem{Corollary}{Corollary}[section]
\newcommand{\R}{\mathbb R}
\newcommand{\N}{\mathbb N} 
\begin{document}
\title[The fractional Hermite  heat equation ]{Heat  equations associated to harmonic oscillator with exponential nonlinearity}
\author[D. G.  Bhimani]{Divyang G. Bhimani}
\address[D. G.  Bhimani]{Department of Mathematics, Indian Institute of Science Education and Research, Dr. Homi Bhabha Road, Pune 411008, India}
\email{\textcolor{blue}{\it divyang.bhimani@iiserpune.ac.in}}
\author[M.  Majdoub]{Mohamed Majdoub}
\address[M. Majdoub]{Department of Mathematics, College of Science, Imam Abdulrahman Bin Faisal University, P. O. Box 1982, Dammam, Saudi Arabia \& Basic and Applied Scientific Research Center, Imam Abdulrahman Bin Faisal University, P.O. Box 1982, 31441, Dammam, Saudi Arabia.}

\email{\textcolor{blue}{\it mmajdoub@iau.edu.sa}}
\email{\textcolor{blue}{\it med.majdoub@gmail.com}}

\author[R. Manna]{Ramesh Manna}
\address[R. Manna]{School of Mathematical Sciences, National Institute of Science Education and Research, Bhubaneswar, An OCC of Homi Bhabha National Institute, Jatni 752050, India.}
\email{\textcolor{blue}{\it rameshmanna@niser.ac.in}}

\subjclass[2020]{35K05, 35K55, 35A01, 42B35}
\keywords{Nonlinear parabolic equations, harmonic potential, exponential nonlinearity, local existence, global existence, nonexistence.}

 \begin{abstract} We investigate the Cauchy problem for a heat equation involving a fractional harmonic oscillator and an exponential nonlinearity:
 $$
 \partial_tu + (-\Delta +\varrho|x|^2)^{\beta}u=f(u), \quad (x,t)\in \R^d\times (0,\infty),
 $$
 where $\varrho\geq 0,~\beta>0$ and $f:\R \to \R$ exhibits exponential growth at infinity, with $f(0)=0$.
 We establish local well-posedness within the appropriate Orlicz spaces. Through the examination of small initial data in suitable Orlicz spaces, we obtain the existence of global weak-mild solutions. Additionally, precise decay estimates are presented for large time, indicating that the decay rate is influenced by the nonlinearity's behavior near the origin. Moreover, we highlight that the existence of local nonnegative classical solutions is no longer guaranteed when certain nonnegative initial data is considered within the appropriate Orlicz space.
  \end{abstract}

\maketitle
\section{Introduction}
\label{id}
 
This paper is concerned with the heat equation associated to fractional harmonic potential with  exponential type nonlinearity:
\begin{equation}
\label{hexp}
\left\{
\begin{array}{rcl} \partial_tu + (-\Delta +\varrho|x|^2)^{\beta}u&=&f(u), \quad (x,t)\in \R^d\times (0,\infty), \\
u(x,0)&=&u_0(x),
\end{array}
\right. 
\end{equation}
where $\varrho\geq 0,~\beta>0$ and $f:\R \to \R$ exhibit an exponential growth at infinity with $f(0)=0$. Note that the cases $\varrho = 0$ and $\beta=1$ in \eqref{hexp} correspond to the standard nonlinear heat (NLH) equation. It is worth mentioning that there has been a large amount of research on the NLH equation, and the monographs \cite{Henry, Hu, Pao, QS} cover a very comprehensive overview of the most established results on the subject. See also \cite{weissler1980local, brezis1996nonlinear, haraux1982non, Mohamed2021, majdoub2018ade, Ioku2018, majdoub2018picm, fino2020cpaa} and the references therein. It is worth highlighting that, to the best of our knowledge, \cite{RT} stands as the pioneering paper where discussions on heat equations with exponential nonlinearities were initiated.

 Recently, Bhimani et al. established in \cite[Theorem 1.2]{bhimani2022heat} a well-posedness result for \eqref{hexp} in Lebesgue spaces, specifically when the nonlinearity takes the form $f(u)=u|u|^{\gamma-1}$ (a polynomial type nonlinearity). In addition,further investigations have been conducted in this area, as evidenced by \cite[Theorems 1.2 and 1.4]{bhimani2021Adv} and \cite[Theorem 1.1]{cordero2021PsDO}. In this paper, we extend the study by focusing on the Cauchy problem for \eqref{hexp} with exponential nonlinearities. Such nonlinearities play a crucial role in various physical models concerning self-trapped beams in plasma, as highlighted  in \cite{Lam77}. {It is worth noting that significant research has been conducted on exponential nonlinearities, particularly for the $2D$ energy-critical NLS equation \cite{CIMM, IMMN} and the $2D$ energy-critical NLW equation \cite{CPAM2006, APDE2011, Duke2009}. Additionally, in \cite{NO-NLS}, the Cauchy problem for the NLS equation with exponential nonlinearity was thoroughly analyzed. The authors demonstrated the existence and uniqueness of global $H^{d/2}$-solutions for small initial data.}

As highlighted in \cite{Mohamed2021, majdoub2018ade, majdoub2018picm}, the well-posedness of \eqref{hexp} depends critically on two main factors: the choice of the initial data space and the behavior of the nonlinearity \( f \).

To conduct our analysis, we impose specific assumptions on the nonlinearity \( f \).
We assume that $f(0) = 0$ and that $f$ satisfies either condition \eqref{nflw} or condition \eqref{nfgw} below 
\begin{equation}\label{nflw}
 |f(u)-f(v)| \leq C  |u-v| \left( e^{\lambda |u|^p} +e^{\lambda |v|^p} \right),
\end{equation}
\begin{equation} \label{nfgw}
  |f(u)-f(v)| \leq C |u-v| \left( |u|^{m-1}e^{\lambda |u|^p} + |v|^{m-1} e^{\lambda |v|^p}  \right),
\end{equation}
where $C, \lambda>0, p>1$, and $m\geq 1+\frac{2p\beta}{d}$. The parameter $m$  captures the behavior of the nonlinearity $f(u)$ near the origin.
It is worth mentioning that the assumptions \eqref{nflw}, \eqref{nfgw} regarding the nonlinearity encompasses various cases such as $f(u)=\pm ue^{|u|^p}$ for \eqref{nflw},  and   $f(u)=\pm u|u|^{m-1},  e^{u}-1-u,  \pm u|u|^{m-1}e^{|u|^{q}} (q \leq p),  e^{|u|^q}-1 (q \leq p)$ for \eqref{nfgw}.

Unless otherwise specified, throughout this article, we assume that $\varrho=1$. The spectral decomposition of the Hermite operator $ H=H^{1}=-\Delta+|x|^2$ on $ \mathbb R^d $ is given by 
$$ 
H = \sum_{k=0}^\infty (2k+d) P_k, 
$$
where $ P_k $ stands for the orthogonal projection of $ L^2(\mathbb R^d) $ onto the eigenspace corresponding to the eigenvalue $(2k+d)$.  We define  the heat  propagator  associated to the fractional harmonic oscillator   $H^{\beta}$  by 
\[e^{-t H^{\beta}}u_0(x)=   \sum_{k=0}^\infty e^{-t(2k+d)^{\beta}} P_ku_0(x). \]

Although Lebesgue spaces are suitable for analyzing heat equations with power nonlinearities, our motivation in this study is to explore the behavior of exponential nonlinearities. To accommodate such nonlinearities effectively, we are inclined to consider initial data in Orlicz spaces. This choice allows us to address the specific challenges posed by exponential terms and obtain meaningful results in our analysis. The authors of \cite{AIHP2016} establish a characterization of nonlinearities \( f \) that guarantee the existence of a local bounded solution in \( L^q \) (for \( 1 \leq q < \infty \)) to the standard NLH equation for all nonnegative initial data \( u_0 \in L^q \). Notably, they establish that this condition is satisfied if and only if
\begin{eqnarray*}
\limsup_{s\to\infty}\left(\frac{f(s)}{s^{1+2q/d}}\right)<\infty\quad&\mbox{if}&\quad 1< q< \infty,\\
\int_1^\infty\,\sup_{1\leq t\leq s}\left(\frac{f(t)}{t}\right)\,\,\frac{ds}{s^{1+2/d}}< \infty \quad&\mbox{if}&\quad q=1.
\end{eqnarray*}

The Orlicz space $\exp L^p(\R^d),\, 1\leq p<\infty$  is defined as follows
\begin{equation*}
\exp L^p(\R^d)= \left\{ u \in L^1_{loc} (\R^d): \int_{\R^d} \left( e^{\frac{|u(x)|^p}{\lambda^p}} -1\right) dx < \infty  \ \text{for some} \ \lambda>0 \right\},
\end{equation*}
endowed with the Luxemburg  norm
\begin{align*} \label{Luxnorm}
\| u\|_{\exp L^p}= \inf \left\{\lambda>0:  \int_{\R^d} \left( e^{\frac{|u(x)|^p}{\lambda^p}} -1\right) dx \leq 1 \right\}. 
\end{align*}

To investigate the local well-posedness issue, we will utilize the following subspace of $\exp L^p$:
 \[\exp L^p_0(\R^d)= \left\{ u \in L^1_{loc} (\R^d): \int_{\R^d} \left( e^{\alpha |u(x)|^p} -1\right) dx < \infty  \ \text{for every} \ \alpha>0 \right\}.\] 
We say that $u$ is a \textbf{\textit{mild }}solution for the Cauchy problem \eqref{hexp} with $u_0\in \exp L_0^p(\R^d)$ if  $u \in C ([0, T ], \exp L^p_0 (\R^d ))$ {satisfies}
\begin{equation}\label{df}
u(t)=e^{-tH^{\beta}} u_0+\int_0^t e^{-(t-s)H^{\beta}} \, f(u(s)) \, ds.
\end{equation}

\begin{theorem}[Local well-posedness] \label{T1.1}
Let $u_0 \in  \exp L_0^p(\R^d)$ and $0< \beta \leq 1.$  Assume that $f$ satisfies \eqref{nflw}. Then there exists $T= T(u_0)>0$  and a unique mild solution  $u \in C([0, T], \exp L^p_0(\R^d))$ to \eqref{hexp}.
\end{theorem}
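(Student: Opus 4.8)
The plan is to run a standard Banach fixed-point argument on the integral formulation \eqref{df}, so the key points are (i) choosing the right complete metric space, (ii) showing the Duhamel map is a contraction there, and (iii) verifying that the solution actually lives in the subspace $\exp L^p_0$ and is continuous in time. First I would record the mapping properties of the semigroup $e^{-tH^\beta}$ on Orlicz spaces: since $0<\beta\le 1$, subordination (Bochner) represents $e^{-tH^\beta}$ as an average of the analytic Hermite semigroups $e^{-sH}$, whose kernels obey Gaussian-type bounds; this gives an $L^q$–$L^r$ smoothing estimate and, crucially, boundedness of $e^{-tH^\beta}$ on $\exp L^p$ (with norm $\le 1$, or at worst uniformly bounded on $[0,T]$) together with strong continuity at $t=0$ on the closed subspace $\exp L^p_0$. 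I would isolate these as auxiliary lemmas (or cite the earlier parts of the paper if already available) before the main argument.

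Next I would set up the functional framework. Fix $M>0$ (to be chosen, comparable to $\|u_0\|_{\exp L^p}$) and $T>0$ small, and work in
\[
X_{T,M}=\Bigl\{u\in C([0,T],\exp L^p_0(\R^d)):\ \sup_{0\le t\le T}\|u(t)\|_{\exp L^p}\le M\Bigr\},
\]
a complete metric space under $d(u,v)=\sup_{0\le t\le T}\|u(t)-v(t)\|_{\exp L^p}$. Define $\Phi(u)(t)=e^{-tH^\beta}u_0+\int_0^t e^{-(t-s)H^\beta}f(u(s))\,ds$. The heart of the matter is to bound $\|f(u(s))\|$ in a norm that the semigroup can absorb. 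Using \eqref{nflw} with $v=0$ (and $f(0)=0$) we get $|f(u)|\le C|u|e^{\lambda|u|^p}$; by Hölder for Orlicz spaces one estimates $\|f(u(s))\|_{L^\sigma}$ for a suitable $\sigma$ (or directly in a smaller Orlicz class), the point being that $e^{\lambda|u(s)|^p}\in L^r$ for every $r<\infty$ provided $\lambda M^p$ is small enough relative to $r$ — this is exactly where the condition $u_0\in\exp L^p_0$ (hence $M$ can be taken small after splitting off a smooth truncation, or the trick of writing $u=e^{-tH^\beta}u_0+w$ with $w$ small) is used. I would then combine this with the $L^\sigma$–$\exp L^p$ smoothing bound $\|e^{-(t-s)H^\beta}g\|_{\exp L^p}\le C(t-s)^{-\delta}\|g\|_{L^\sigma}$, with $\delta<1$, so that $\int_0^t(t-s)^{-\delta}\,ds=C\,t^{1-\delta}$ is finite and small for small $T$. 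This yields $\|\Phi(u)(t)\|_{\exp L^p}\le \|u_0\|_{\exp L^p}+C\,T^{1-\delta}\,g(M)\le M$ for $T$ small, giving the self-map property; the Lipschitz estimate \eqref{nflw} gives $d(\Phi(u),\Phi(v))\le C\,T^{1-\delta}\,h(M)\,d(u,v)$, a contraction for $T$ small. Uniqueness on $[0,T]$ follows from the same Lipschitz estimate by a Gronwall/iteration argument.

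The continuity $u\in C([0,T],\exp L^p_0)$ and the membership in the subspace $\exp L^p_0$ (not merely $\exp L^p$) deserve care: for the linear part, strong continuity of $e^{-tH^\beta}$ on $\exp L^p_0$ gives $e^{-\cdot H^\beta}u_0\in C([0,T],\exp L^p_0)$; for the Duhamel term I would show it belongs to $\exp L^p_0$ by noting that $e^{-(t-s)H^\beta}$ maps $L^\sigma$ into $\exp L^p_0$ (a parabolic smoothing/regularization effect — the kernel bounds force the exponential integrability for every $\alpha>0$), and then establish $t$-continuity by a standard splitting of the integral near $s=t$ together with strong continuity of the semigroup.

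The main obstacle I anticipate is the interplay between the exponential nonlinearity and the requirement that the fixed point remain in $\exp L^p_0$: one must choose the smallness parameter $M$ (equivalently, absorb the non-small part of $u_0$ into the linear flow, using that $e^{-tH^\beta}u_0\to u_0$ in $\exp L^p_0$ so its contribution to the exponential can be made to have small $\lambda$-coefficient) so that $e^{\lambda|u(s)|^p}$ lies in the Lebesgue exponent dictated by the smoothing estimate. Getting the Orlicz-Hölder bookkeeping right — relating the Luxemburg norm of $|u|e^{\lambda|u|^p}$ to $\|u\|_{\exp L^p}$ through the $L^r$ norms of $e^{\lambda|u|^p}$, uniformly in $r$ — is the delicate computational core; the rest is routine contraction-mapping machinery.
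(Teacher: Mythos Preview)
Your proposal correctly identifies the central obstacle---to bound $\|e^{\lambda|u(s)|^p}\|_{L^r}$ uniformly over a ball of radius $M$ in $\exp L^p$ one needs $\lambda r M^p\le 1$, which fails once $M$ is taken comparable to $\|u_0\|_{\exp L^p}$ for a general $u_0\in\exp L^p_0$. You mention the remedy (``splitting off a smooth truncation'') only parenthetically, but it is not an optional refinement: it is the spine of the argument. A single fixed point on your $X_{T,M}$ with $M\sim\|u_0\|_{\exp L^p}$ does not close, because although each $u(s)\in\exp L^p_0$ gives $e^{\lambda|u(s)|^p}\in L^r$ for every $r$, there is no bound on that $L^r$ norm that depends only on $\|u(s)\|_{\exp L^p}\le M$; hence neither the self-map nor the contraction inequality can be written in terms of $M$ and $T$ alone. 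Your alternative of writing $u=e^{-tH^\beta}u_0+w$ and seeking $w$ small runs into the same wall, since $e^{-sH^\beta}u_0$ is close to $u_0$ in $\exp L^p$ and therefore not small.

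The paper commits to the density decomposition from the outset and splits the problem in two. One writes $u_0=v_0+w_0$ with $v_0\in C_0^\infty(\R^d)$ and $\|w_0\|_{\exp L^p}\le\epsilon$, then (i) solves $\partial_t v+H^\beta v=f(v)$, $v(0)=v_0$, by a fixed point in $C([0,T],\exp L^p_0)\cap L^\infty(0,T;L^\infty)$, where the exponential is controlled pointwise by $e^{\lambda\|v\|_{L^\infty}^p}$; and (ii) solves the \emph{perturbed} equation $\partial_t w+H^\beta w=f(w+v)-f(v)$, $w(0)=w_0$, by a fixed point in a ball of radius $2\epsilon$ in $C([0,\tilde T],\exp L^p_0)$, where now the smallness $\|w\|_{\exp L^p}\le 2\epsilon$ makes the estimate $\|e^{2^p\lambda|w|^p}-1\|_{L^r}\lesssim 1$ available for $r$ as large as needed (one chooses $r>\max\{p,\tfrac{d}{2\beta}\}$ and $\epsilon$ so that $2^p\lambda r(2\epsilon)^p\le 1$). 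The key nonlinear estimate is then $\|f(w_1+v)-f(w_2+v)\|_{L^q}\le C_q\,e^{2^{p-1}\lambda\|v\|_{L^\infty}^p}\|w_1-w_2\|_{\exp L^p}$, which separates the large-but-bounded part $v$ from the small-in-Orlicz part $w$. The solution is $u=v+w$. Your Orlicz--H\"older bookkeeping and the smoothing estimates you describe are exactly the tools used, but they must be applied inside this two-step structure rather than in a single ball.
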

\begin{Remark}
~{\rm \begin{itemize}
    \item[($i$)] The restriction on $\beta$ in Theorem \ref{T1.1} is imposed as a consequence of Theorem \ref{dc}, specifically condition \eqref{P2}. This restriction plays a crucial role in the proof of Lemma \ref{L4.1}.
    \item[($ii$)] We would like to emphasize the significant role played by the density $C_0^\infty(\mathbb{R}^d)$ within $\exp L^p_0(\mathbb{R}^d)$ in the proof of Theorem \ref{T1.1}. 
\item[($iii$)] Theorem \ref{Thne} below asserts the non existence of local solutions within the space $\exp L^p(\mathbb{R}^d)$.
\end{itemize}}
\end{Remark}

We would like to highlight the following observation: the operator $e^{-tH^{\beta}}$ exhibits continuity at $t=0$ in the space $\exp L_0^p(\mathbb{R}^d)$ but not in $\exp L^p(\mathbb{R}^d)$. This distinction is evident from Proposition \ref{conexp0} below. Consequently, to study the equation \eqref{hexp} in the Orlicz space $\exp L^p(\mathbb{R}^d)$, we employ the concept of weak-mild solutions. Specifically, we define a weak-mild solution for the Cauchy problem \eqref{hexp} with $u_0\in \exp L_0^p(\mathbb{R}^d)$ as follows: $u$ belongs to $L^\infty((0, T), \exp L^p(\mathbb{R}^d))$ and satisfies the associated integral equation \eqref{df} in $\exp L^p(\mathbb{R}^d)$ for almost all $t\in (0, T)$, with $u(t)$ converging to $u_0$ in the weak* topology as $t\to 0$.

\begin{theorem}[Global existence] \label{Th1} Let $1< p \leq \frac{d(m-1)}{2\beta}$ and  $0< \beta \leq 1.$  Assume that $f$ satisfies \eqref{nfgw} for $m\geq p$.
Then there exists $\epsilon>0$ such that for any initial data $u_0 \in \exp L^p (\R^d)$ with $\|u_0\|_{\exp L^p} \leq \epsilon,$ there exists a global weak-mild solution 
\[ u \in L^{\infty} \left( (0, \infty), \exp L^p (\R^d) \right) \] to \eqref{hexp} satisfying 
\[ \lim_{t\to 0} \| u(t)- e^{-tH^{\beta}} u_0\|_{\exp L^p}=0.\]
Moreover we have  
\begin{align}\label{E1.4}
    \|u(t)\|_{L^a} \leq C t^{-\left(\frac 1{m-1}-\frac d{2\beta a}\right)},~t>0,
\end{align}
where $a$ satisfies 
\begin{enumerate}
    \item If $\frac d{2\beta}>\frac p{p-1}$, then $\frac d{2\beta} (m-1)<a<\frac d{2\beta} (m-1) \frac{1}{(2-m)_+}.$

    \item If $\frac d{2\beta}=\frac p{p-1}$, then $\frac d{2\beta} (m-1)<a<\frac d{2\beta} (m-1) \frac{1}{(2-m)_+}.$

    \item If $\frac d{2\beta}<\frac p{p-1}$ and $(2-m)_+<\frac{d(p-1)}{2 \beta p}$, then $\frac p{p-1}(m-1)<a<\frac d{2\beta} (m-1) \frac{1}{(2-m)_+},$
\end{enumerate}
with {$(z)_{+}$  stands for the positive part of a real number $z$.}
\end{theorem}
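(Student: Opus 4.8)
The plan is to run a fixed-point argument for the integral formulation \eqref{df} in a space adapted to both the Orlicz norm control near $t=0$ and the $L^a$ decay for $t>0$. First I would fix the auxiliary exponent $a$ in one of the admissible ranges (1)--(3) and introduce the complete metric space
\[
X=\Big\{u\in L^\infty\big((0,\infty),\exp L^p\big):\ \|u\|_{\exp L^p}\le 2\epsilon,\ \sup_{t>0}t^{\sigma}\|u(t)\|_{L^a}\le M\Big\},\qquad \sigma=\tfrac1{m-1}-\tfrac d{2\beta a},
\]
equipped with the distance coming from the weighted $L^a$ norm $\sup_{t>0}t^\sigma\|u(t)-v(t)\|_{L^a}$. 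The choice of $a$ in the stated ranges guarantees $\sigma>0$ and, crucially, that the various exponents appearing below are subcritical, i.e. the time integrals converge. The map is $\Phi(u)(t)=e^{-tH^\beta}u_0+\int_0^t e^{-(t-s)H^\beta}f(u(s))\,ds$.

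The key ingredients are the smoothing/decay estimates for $e^{-tH^\beta}$ on Orlicz and Lebesgue spaces (Theorem \ref{dc}, conditions \eqref{P2} and the attendant $L^q$--$L^r$ bounds), which I expect give $\|e^{-tH^\beta}\|_{\exp L^p\to\exp L^p}\lesssim 1$ and $\|e^{-tH^\beta}g\|_{L^r}\lesssim t^{-\frac d{2\beta}(\frac1q-\frac1r)}\|g\|_{L^q}$ for $q\le r$, valid for $0<\beta\le1$; this is exactly where the restriction $\beta\le1$ is used. For the linear part, $e^{-tH^\beta}u_0$ stays in the ball of $\exp L^p$ and, by interpolating $\exp L^p\hookrightarrow L^q$ for all $q\ge p$ together with the smoothing estimate, satisfies the weighted $L^a$ bound with constant $O(\epsilon)$. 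For the Duffield term one uses \eqref{nfgw}: with $f(0)=0$ it yields the pointwise bound $|f(u)|\lesssim |u|^m e^{\lambda|u|^p}$, and since $u(s)\in\exp L^p$ with small norm, the exponential factor is controlled in every $L^\rho$ by a Trudinger–Moser/Orlicz embedding (this forces $p>1$ and the smallness $\|u_0\|_{\exp L^p}\le\epsilon$), so $\|f(u(s))\|_{L^{a/m}}\lesssim \|u(s)\|_{L^a}^m$ up to an absorbable exponential correction. Feeding this into the smoothing estimate and computing
\[
\int_0^t (t-s)^{-\frac d{2\beta}\left(\frac{m}{a}-\frac1a\right)}s^{-\sigma m}\,ds = C\,t^{-\sigma},
\]
which is finite precisely because the range of $a$ makes both exponents $<1$ and their sum $<1$ — this Beta-integral bookkeeping is the technical heart — closes the weighted $L^a$ estimate, and a similar computation gives the $\exp L^p$ bound and the contraction. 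Uniqueness and the convergence $\|u(t)-e^{-tH^\beta}u_0\|_{\exp L^p}\to0$ as $t\to0$ follow from the same estimates, the latter because the Duhamel integral is $O(t^{1-\text{(exponent)}})$ in $\exp L^p$.

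The main obstacle I anticipate is the interplay between the three exponents: one must verify that there is a nonempty range of $a$ for which simultaneously (i) $\sigma=\frac1{m-1}-\frac d{2\beta a}>0$, (ii) the smoothing exponent $\frac d{2\beta}\cdot\frac{m-1}{a}<1$, and (iii) the singularity $s^{-\sigma m}$ at $s=0$ is integrable, i.e. $\sigma m<1$; the three cases (1)--(3) in the statement, distinguished by the sign of $\frac d{2\beta}-\frac p{p-1}$ and by whether $m\ge2$ or $m<2$ (hence the appearance of $(2-m)_+$), are exactly the case analysis needed to exhibit such an $a$. A secondary subtlety is justifying that $\Phi(u)$ is weak\* continuous at $t=0$ and that the fixed point is a genuine weak-mild solution in the sense defined before the theorem; this is handled by the density of $C_0^\infty$ arguments already invoked for Theorem \ref{T1.1} together with the $L^\infty_t(\exp L^p)$ bound and dominated convergence.
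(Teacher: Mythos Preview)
Your overall architecture is right and matches the paper: fixed point for the Duhamel map on the set
\[
Y_M=\Big\{u:\ \|u\|_{L^\infty((0,\infty),\exp L^p)}+\sup_{t>0}t^\sigma\|u(t)\|_{L^a}\le M\Big\},\qquad \sigma=\tfrac{1}{m-1}-\tfrac{d}{2\beta a},
\]
with distance $\rho(u,v)=\sup_{t>0}t^\sigma\|u-v\|_{L^a}$, and the linear part handled exactly as you say via Theorem~\ref{dc} and Proposition~\ref{lpexp}. The gap is in your treatment of the nonlinear term.

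You write $\|f(u(s))\|_{L^{a/m}}\lesssim \|u(s)\|_{L^a}^m$ ``up to an absorbable exponential correction'' and then integrate $\int_0^t(t-s)^{-\frac{d}{2\beta}\frac{m-1}{a}}s^{-\sigma m}\,ds$. But the factor $e^{\lambda|u|^p}$ is \emph{not} in $L^\infty$, so there is no way to land in $L^{a/m}$; the Orlicz control you invoke only places $e^{\lambda|u|^p}-1$ in $L^\rho$ for finite $\rho$ (Lemma~\ref{L2.31}), which shifts the target exponent and hence the time singularity. Even if you patch this with a single H\"older split, your integrability condition $\sigma m<1$ yields only $a<\frac{d}{2\beta}m(m-1)$, strictly smaller than the stated upper bound $\frac{d}{2\beta}\frac{m-1}{(2-m)_+}$. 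The paper's device is to expand $e^{\lambda|u|^p}$ as a Taylor series and, for the $k$th term $|u|^{pk+m-1}$, interpolate with a \emph{$k$-dependent} parameter $\theta_k\in(0,1)$ between $L^a$ (carrying the weight $t^{-\sigma}$) and $L^{\rho_k}\supset\exp L^p$ (uniformly bounded); the choice of $r,q,\theta_k,\rho_k$ is the content of Lemma~\ref{L2.3}, and the resulting condition is $\sigma[1+\theta_k(pk+m-1)]<1$ together with a scaling identity, which after optimisation gives exactly $\sigma<1$ in the limit and hence the bound $a<\frac{d(m-1)}{2\beta(2-m)_+}$. Without this $k$-by-$k$ interpolation the contraction estimate does not close on the stated range, and the series over $k$ need not converge.

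A second point: the three cases are \emph{not} only the bookkeeping needed to locate an admissible $a$. They correspond to genuinely different arguments for the $\exp L^p$ bound on the Duhamel integral. When $\frac{d}{2\beta}>\frac{p}{p-1}$ the paper uses the $L^1\cap L^r\to\exp L^p$ kernel $\kappa\in L^1(0,\infty)$ of Corollary~\ref{C2.3}; when $\frac{d}{2\beta}=\frac{p}{p-1}$ one must pass through the auxiliary Orlicz space $L^\phi$ with $\phi(s)=e^{s^p}-1-s^p$ (Corollary~\ref{C3.2}) because the previous kernel just misses $L^1$; and when $\frac{d}{2\beta}<\frac{p}{p-1}$ the Duhamel integral is split at $t-\eta^{-2\beta/d}$ and the two pieces are estimated by different parts of Proposition~\ref{lpexp}. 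Your sketch collapses these into a single case and would not produce a uniform-in-$t$ bound for the $\exp L^p$ norm in the latter two regimes.
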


{
\begin{Remark}~{\rm In view of hypothesis and conclusions stated in the above theorem, some comments arise, we enumerate them in what follows.
\begin{itemize}
    \item[($i$)] The restriction on $\beta$ in Theorem \ref{Th1} is a direct consequence of condition \eqref{P2} in Theorem \ref{dc} below. 
    \item[($ii$)] The assumption $p>1$ is essential in the subsequent Corollaries \ref{C2.3} and \ref{C3.2}. 
\item[($iii$)] A natural question arises as to whether the range of \( a \) required for \eqref{E1.4} is optimal. It is likely that this range can be improved, and further investigation could refine our understanding of the conditions under which \eqref{E1.4} holds.
\item[($iv$)] Similar results have been obtained in previous studies such as \cite{fino2020cpaa, majdoub2018picm, Mohamed2021} for the standard NLH equation, where $\varrho=0$ in \eqref{hexp}.
\end{itemize}}
\end{Remark}
}

\begin{Definition} [$\exp L^p$ -classical solution]
Let $u_0 \in \exp L^p(\R^d)$ and $T >0$. A function $u \in C((0,T]; \exp L^p(\R^d)) \bigcap L^{\infty}_{\rm{loc}}(0,T; L^{\infty}(\R^d))$ is said to be $\exp L^p$-classical solution of \eqref{hexp} if $u \in C^{1,2} ((0, T ) \times \R^d)$, satisfies \eqref{hexp} in the classical sense and $u(t) \to u_0$ in the weak$^{\star}$ topology as $t \to 0$.
\end{Definition}

\begin{theorem}[Nonexistence] \label{Thne}
Assume that the nonlinear term $f$ is continuous, $f(x)\geq 0$ if $x\geq 0,$ and 
\begin{equation}
    \label{liminf}
    \liminf_{\eta\to \infty}\left(f(\eta) \, e^{-\lambda \eta^p}\right)>0,
\end{equation}
for some $\lambda>0$ and $p>1$. Then, there exists  $0\leq\,u_0 \in \exp L^p(\R^d)$ such that for every $T>0$ the Cauchy problem \eqref{hexp} with $\beta=1$ has no nonnegative $\exp L^p$- classical solution on $[0,T).$
\end{theorem}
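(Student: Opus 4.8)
The plan is to exhibit a single nonnegative datum $u_0\in\exp L^p(\R^d)$, with a logarithmic singularity at the origin, for which the Duhamel integral diverges, so that any nonnegative $\exp L^p$‑classical solution would be infinite at a point of $(0,T)\times\R^d$ — impossible for a $C^{1,2}$ function. This is the instantaneous‑blow‑up mechanism of Brezis–Cazenave/Weissler type, adapted to the Mehler semigroup and the exponential nonlinearity.

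\emph{Step 1 (reduction and the basic pointwise lower bound).} From \eqref{liminf} there are $\eta_0>0$ and $c_0>0$ with $f(\eta)\ge c_0 e^{\lambda\eta^p}$ for all $\eta\ge\eta_0$; together with $f\ge0$ on $[0,\infty)$ this is the only property of $f$ used. Suppose, for contradiction, that for some $T>0$ there is a nonnegative $\exp L^p$‑classical solution $u$ on $[0,T)$; since $u\in C^{1,2}((0,T)\times\R^d)$, $u(t,x)<\infty$ for every $(t,x)\in(0,T)\times\R^d$. On each $[\varepsilon,t]\subset(0,T)$ the datum $u(\varepsilon)\in\exp L^p\cap L^\infty$ is admissible and $u$ satisfies $u(t)=e^{-(t-\varepsilon)H}u(\varepsilon)+\int_\varepsilon^t e^{-(t-s)H}f(u(s))\,ds$; by positivity of the Mehler kernel and $f\ge0$ this gives $u(s)\ge e^{-(s-\varepsilon)H}u(\varepsilon)$ for $\varepsilon<s<t$. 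Since $u(\varepsilon)\to u_0$ weak$^{\star}$ and the Mehler kernel $e^{-(s-\varepsilon)H}(x,\cdot)$ is an admissible test function, letting $\varepsilon\to0$ (and using strong continuity of the semigroup to pass from $s-\varepsilon$ to $s$) yields $u(s,x)\ge e^{-sH}u_0(x)$ for all $s\in(0,T)$, $x\in\R^d$. Feeding this back into Duhamel, dropping the free term, and noting that on $\{e^{-sH}u_0(y)\ge\eta_0\}$ one has $u(s,y)\ge\eta_0$ and hence $f(u(s,y))\ge c_0 e^{\lambda u(s,y)^p}\ge c_0 e^{\lambda(e^{-sH}u_0(y))^p}$, monotone convergence as $\varepsilon\to0$ gives
\[
u(t,x)\ \ge\ c_0\int_0^t\!\!\int_{\{\,e^{-sH}u_0(y)\ge\eta_0\,\}} e^{-(t-s)H}(x,y)\,e^{\lambda\,(e^{-sH}u_0(y))^p}\,dy\,ds .
\]

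\emph{Step 2 (choice of $u_0$ and Mehler bounds).} Fix a constant $c\ge1$ (to be taken large) and set $u_0(x)=c\,\big(\log\tfrac{2}{|x|}\big)^{1/p}$ for $|x|\le1$ and $u_0(x)=0$ for $|x|>1$. Then $u_0\ge0$, $u_0\in L^1_{\rm loc}$, and since $e^{|u_0|^p/\mu^p}=(2/|x|)^{c^p/\mu^p}$ near the origin, $u_0\in\exp L^p(\R^d)$ as soon as $\mu^p>c^p/d$; thus $u_0\in\exp L^p(\R^d)$ for every $c$. Mehler's formula gives, for $0<\sigma\le\sigma_0$, $e^{-\sigma H}(x,y)\gtrsim\sigma^{-d/2}e^{-C|x-y|^2/\sigma}$, and consequently $e^{-\sigma H}(x,y)\ge\kappa(t)>0$ whenever $x,y$ lie in a fixed compact set and $\sigma\in[t/2,t]$ with $t\le\sigma_0$. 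Using only the mass of $u_0$ on $\{|z|\le\sqrt s\}$, this produces constants $c_1,s_1>0$ (uniform for $c\ge1$) such that for $0<s\le s_1$ and $|y|\le c_1\sqrt s$,
\[
e^{-sH}u_0(y)\ \ge\ c_1\,c\,\Big(\log\tfrac1s\Big)^{1/p}\ \ge\ \eta_0,\qquad\text{hence}\qquad \big(e^{-sH}u_0(y)\big)^p\ \ge\ c_1^{p}\,c^{p}\,\log\tfrac1s .
\]

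\emph{Step 3 (divergence and conclusion).} Fix $x=0$ and $t>0$ small enough that $t<T$, $t\le\sigma_0$ and $t/2\le s_1$. Restricting in the displayed inequality of Step 1 to $s\in(0,t/2)$ and $|y|\le c_1\sqrt s$ and inserting Step 2,
\[
u(t,0)\ \ge\ c_0\,\kappa(t)\int_0^{t/2} s^{-\lambda c_1^{p}c^{p}}\,\big|\{|y|\le c_1\sqrt s\}\big|\,ds\ \gtrsim\ c_0\,\kappa(t)\int_0^{t/2} s^{\,d/2-\lambda c_1^{p}c^{p}}\,ds .
\]
Choosing $c$ so large that $\lambda c_1^{p}c^{p}\ge d/2+1$ makes this integral diverge, so $u(t,0)=+\infty$, contradicting that $u(t,\cdot)$ is a (finite‑valued) classical function. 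Since a solution on $[0,T)$ restricts to any $[0,T')$ with $T'<T$, this proves nonexistence for every $T>0$.

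The main obstacle is Step 1: a classical solution only attains $u_0$ weak$^{\star}$ and is not a priori given by a global Duhamel identity, so one must run Duhamel on $[\varepsilon,t]$ (where the data are bounded) and pass to the limit against the smooth Mehler kernel to obtain the clean bound $u(s,x)\ge e^{-sH}u_0(x)$. The remaining ingredients — the Gaussian lower bound for the Mehler kernel at bounded times, the membership $u_0\in\exp L^p$, and the elementary divergence of $\int_0^{t/2}s^{d/2-\lambda c_1^{p}c^{p}}\,ds$ — are routine.
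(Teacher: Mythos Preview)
Your proof is correct and follows essentially the same route as the paper's: the same logarithmic datum $u_0$, the same comparison $u(s)\ge e^{-sH}u_0$ obtained via weak$^\star$ convergence against the Mehler kernel, and the same divergent integral $\int_0^{t/2}s^{d/2-\lambda C c^p}\,ds$ for $c$ large. The only cosmetic difference is that the paper tests the equation against a cut--off $\phi\in C_c^\infty$ to obtain the finite--integral constraint $\int_0^{T'}\!\int_{B_r}f(u)\,dx\,d\tau<\infty$ and then contradicts it, whereas you reach the contradiction from the pointwise Duhamel bound $u(t,0)=+\infty$.
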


Theorem \ref{Thne} reveals the absence of local solutions for specific data in $\exp L^p(\mathbb{R}^d)$, despite the existence of a global existence result for small data within the same space $\exp L^p(\mathbb{R}^d)$. Therefore, Theorem \ref{Thne} serves as a complement to Theorems \ref{T1.1} and \ref{Th1}, further enhancing our understanding of the behavior and limitations of solutions in the context of the considered problem. See  \cite[Theorem 2.1.3]{IRT} for a comparable result when $\varrho=0$ and $\beta=1$.
\begin{Remark}
{\rm The assumption $\beta=1$ in Theorem \ref{Thne} is a product of our chosen approach. Specifically, the proof relies on the use of the convolution formula for the Hermite heat semigroup $e^{-tH}$ as presented in \eqref{mc}. This particular choice and utilization of the convolution formula lead to the restriction $\beta=1$ in the above theorem.}
\end{Remark}
\begin{Remark}
{\rm In an upcoming study, we will provide a comprehensive characterization of the nonlinearities $f$ that allow equation \eqref{hexp} to possess a local solution within both Lebesgue spaces $L^q$ and Orlicz spaces $\exp L^q$, where $1 \leq q < \infty$. This investigation aims to elucidate the precise conditions under which local solutions exist in these function spaces, offering a deeper understanding of the problem at hand.}
\end{Remark}

We conclude the introduction with an outline of the paper. In the next section, we recall some basic facts and useful tools about Orlicz spaces. In Section \ref{S3} we give the proof of Theorem \ref{T1.1}. The fourth section is devoted to the proof of Theorem \ref{Th1}. Finally, Section \ref{S5} contains the proof of the nonexistence result given in Theorem \ref{Thne}. Along this paper, $C$ will stands for a positive constant which may have different values at different places.  
\section{Preliminaries and key estimates}
Let us begin by revisiting the definition of Orlicz spaces and summarizing some fundamental aspects. For a comprehensive understanding and further elaboration, we recommend referring to  \cite{HH2019, RR, RR1}. Additionally, we provide essential estimates that play a crucial role in our analysis.

\begin{Definition}[Orlicz space]
Let $\phi:\R^+ \to \R^+$ be a convex increasing function such that
\[\phi(0)=0=\lim_{s\to 0^+} \phi(s),~\lim_{s\to \infty} \phi(s)=\infty.\]
The Orlicz space $ L^{\phi}(\R^d)$  is defined as follows
\begin{equation*}
L^{\phi}(\R^d)= \left\{ u \in L^1_{loc} (\R^d): \int_{\R^d} \phi \left(\frac{|u(x)|}{\lambda} \right) dx < \infty  \ \text{for some} \ \lambda>0 \right\}
\end{equation*}
endowed with the Luxemburg  norm
\begin{align} \label{Luxnorm}
\| u\|_{L^{\phi}}= \inf \left\{\lambda>0:  \int_{\R^d} \phi\left( \frac{|u(x)|}{\lambda}\right) dx \leq 1 \right\}. 
\end{align}

We also consider the space 
 \[ L^{\phi}_0(\R^d)= \left\{ u \in L^1_{loc} (\R^d): \int_{\R^d} \phi\left(\frac{|u(x)|}{\lambda} \right) dx < \infty  \ \text{for every} \ \lambda>0 \right\}.\]
\end{Definition}
Ioku et al.  in  \cite[Section 2]{ioku2015MPAG} proved that
  \[ L_0^\phi(\R^d)= \overline{C_0^{\infty}(\R^d)}^{\| \cdot \|_{L^\phi}}= \text{the closure of} \  C_{0}^{\infty}(\R^d)  \  \text{in} \   L^{\phi}(\mathbb R^d). \]
Note that 
\begin{eqnarray*}
L^{\phi}(\R^d)=\begin{cases} L^{\phi}_0(\R^d)= L^p(\R^d) \quad if \   \phi(s)=s^p \ (1\leq p < \infty),\\
\exp L^p (\R^d) \quad if \   \phi(s)=e^{s^p}-1 \ (1\leq p < \infty).
\end{cases}
\end{eqnarray*}
\begin{Lemma}[Inclusion properties] \label{relaorL} \ 
\begin{enumerate}
    \item[1)] \label{pl1} (\cite[Lemma 2.3]{majdoub2018picm}) $L^q(\R^d) \cap L^{\infty}(\R^d) \hookrightarrow \exp L^p_0(\R^d) \hookrightarrow \exp L^p(\R^d)$ for  $1\leq q \leq p,$ with 
   { \begin{equation}
        \label{exp-l-p-est}
        \|u\|_{\exp L^p} \leq \frac{1}{(\log 2)^{\frac 1p}}\left( \|u\|_{L^q}+\|u\|_{{L^\infty}}\right).
    \end{equation} }
\item[2)] \label{pl2} (\cite[Lemma 2.3]{fino2020cpaa})  $L^q(\R^d) \cap L^{\infty}(\R^d) \hookrightarrow  L^{\phi}_0(\R^d) \hookrightarrow  L^{\phi}(\R^d)$ for $q\leq 2p,$ $\phi(s)=e^{s^p}-1-s^p (p>1),$ with   
\[\|u\|_{L^{\phi}(\R^d)} \leq C(p)\left( \|u\|_{L^q}+\|u\|_{{L^\infty}}\right).\] 
\item[3)] \label{pl3} (\cite[Lemma 2.4]{majdoub2018picm}) $\exp L^p(\R^d) \hookrightarrow L^q(\R^d)$ for $1\leq p \leq q<\infty,$ with
{
    \begin{equation}
        \label{L-q-exp-L-p}
        \|u\|_{L^q} \leq \left(\Gamma(\frac pq+1)\right)^{\frac 1q} \|u\|_{\exp L^p},
    \end{equation}}
    where  the Gamma function {is given by} $\Gamma(x):=\displaystyle\int_0^{\infty}s^{x-1} e^{-s} \, ds,~x>0.$
\end{enumerate}
\end{Lemma}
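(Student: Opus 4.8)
The plan is to prove all three embeddings by the same device: reduce each inclusion to an elementary pointwise inequality between the defining Orlicz integrand and a pure power, then integrate. The only structural facts I need are the characterization $\|u\|_{L^\phi}\le\lambda\Longleftrightarrow\int_{\R^d}\phi(|u|/\lambda)\,dx\le 1$ of the Luxemburg norm, together with the boundary property (from monotone convergence) that $\int_{\R^d}\phi(|u|/\|u\|_{L^\phi})\,dx\le 1$ at the critical value. For the two forward embeddings, membership of $u$ in $L^q\cap L^\infty$ enters through the interpolation bound $\int_{\R^d}|u|^r\,dx\le\|u\|_{L^\infty}^{\,r-q}\|u\|_{L^q}^q$, valid for all $r\ge q$, which guarantees that every power integral below is finite.

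For item 1) I would set $A=\|u\|_{L^q}+\|u\|_{L^\infty}$ and test the norm at $\lambda=A/(\log 2)^{1/p}$, so that $|u|^p/\lambda^p=(\log 2)(|u|/A)^p$. Since $|u|\le\|u\|_{L^\infty}\le A$ we have $|u|/A\le 1$, and because $q\le p$ this gives $(|u|/A)^p\le(|u|/A)^q$ pointwise. Hence
\[
e^{|u|^p/\lambda^p}-1=2^{(|u|/A)^p}-1\le 2^{(|u|/A)^q}-1\le\frac{|u|^q}{A^q},
\]
where the last step uses the convexity of $t\mapsto 2^t-1$ on $[0,1]$ (so it lies below its chord $t$ there). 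Integrating gives $\int_{\R^d}(e^{|u|^p/\lambda^p}-1)\,dx\le\|u\|_{L^q}^q/A^q\le 1$, which is precisely the stated norm bound. For the sharper membership $u\in\exp L^p_0$ I would expand $e^{\alpha|u|^p}-1=\sum_{k\ge 1}\alpha^k|u|^{pk}/k!$ and apply the interpolation bound termwise (legitimate since $pk\ge p\ge q$) to obtain the closed form $\int_{\R^d}(e^{\alpha|u|^p}-1)\,dx\le\|u\|_{L^\infty}^{-q}\|u\|_{L^q}^q\,(e^{\alpha\|u\|_{L^\infty}^p}-1)<\infty$ for every $\alpha>0$; the inclusion $\exp L^p_0\hookrightarrow\exp L^p$ is immediate from the definitions.

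Item 2) follows the identical template with $\phi(s)=e^{s^p}-1-s^p=\sum_{k\ge 2}s^{pk}/k!$, the gain being that the lowest surviving power is now $s^{2p}$, which is what relaxes the restriction to $q\le 2p$. Choosing $\lambda=C(p)A$ with $\lambda\ge\|u\|_{L^\infty}$ forces $|u|/\lambda\le 1$, and since $q\le 2p\le pk$ for every $k\ge 2$ we get $\phi(|u|/\lambda)\le(|u|/\lambda)^q\sum_{k\ge 2}1/k!=(e-2)(|u|/\lambda)^q$. Taking $C(p)$ large enough that also $(e-2)\|u\|_{L^q}^q\le\lambda^q$ yields $\int_{\R^d}\phi(|u|/\lambda)\,dx\le 1$, hence the claimed bound; the density description of $L^\phi_0$ and the embedding into $L^\phi$ are exactly as in item 1).

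For the reverse embedding 3) I would take $\lambda=\|u\|_{\exp L^p}$, so that $\int_{\R^d}(e^{|u|^p/\lambda^p}-1)\,dx\le 1$, and reduce everything to the single pointwise inequality
\[
\tau^{r}\le\Gamma(r+1)\,(e^{\tau}-1),\qquad \tau\ge 0,\quad r:=q/p\ge 1.
\]
Setting $\tau=|u|^p/\lambda^p$ and integrating then gives $\|u\|_{L^q}^q=\lambda^q\int_{\R^d}\tau^r\,dx\le\Gamma(r+1)\lambda^q$, i.e. the estimate after taking $q$-th roots. The heart of the matter—and the step I expect to be the main obstacle—is this pointwise inequality. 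For integer $r$ it is trivial, being a single term of the positive series $e^\tau-1=\sum_{k\ge 1}\tau^k/k!$ together with $\Gamma(r+1)=r!$; for general $r\ge 1$ it is equivalent to $\sup_{\tau>0}\tau^r/(e^\tau-1)\le\Gamma(r+1)$, which I would establish by noting the ratio vanishes at both endpoints, so the supremum is interior, and then bounding it by comparing the peak value $\tau^re^{-\tau}\le r^re^{-r}$ of the Gamma integrand against $\Gamma(r+1)=\int_0^\infty\tau^re^{-\tau}\,d\tau$. I remark that this scheme produces $\Gamma$ evaluated at $1+q/p$, and that replacing it by the extremal value $\sup_{\tau>0}\tau^{q/p}/(e^\tau-1)$ would give the optimal constant in the same embedding.
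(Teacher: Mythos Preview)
The paper does not supply a proof of this lemma; all three parts are quoted from \cite{majdoub2018picm} and \cite{fino2020cpaa} without argument, so there is nothing to compare your approach against. Your proofs of items 1) and 2) are correct. The reduction of item 1) to the elementary inequality $2^t-1\le t$ on $[0,1]$ (via convexity of $t\mapsto 2^t-1$ between its endpoint values $0$ and $1$) is clean and delivers the stated constant exactly; the series argument for membership in $\exp L^p_0$ is likewise fine. Item 2) is handled by the same template, and your observation that the lowest surviving power in $\phi$ is $s^{2p}$ correctly explains the relaxed hypothesis $q\le 2p$.

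For item 3) there is a genuine gap. You are right that the constant should read $\Gamma(q/p+1)$ rather than the paper's $\Gamma(p/q+1)$, which is a typo (the latter stays below $1$ as $q\to\infty$, whereas the inequality $\tau^{q/p}\le C(e^\tau-1)$ forces $C\to\infty$). But your argument for the pointwise inequality $\tau^r\le\Gamma(r+1)(e^\tau-1)$ when $r=q/p$ is non-integer does not close: comparing the peak $r^re^{-r}$ of $\tau^re^{-\tau}$ to $\Gamma(r+1)$ gives at best $\tau^r\le\Gamma(r+1)\,e^\tau$, and replacing $e^\tau$ by $e^\tau-1$ is not free. One way to finish is to write
\[
\Gamma(r+1)(e^\tau-1)-\tau^r=\int_0^\tau\big(\Gamma(r+1)e^u-ru^{r-1}\big)\,du
\]
and check that the integrand is nonnegative for all $u\ge 0$; this amounts to $u^{r-1}e^{-u}\le\Gamma(r)$, whose worst case $u=r-1$ is exactly the Stirling-type bound $(s/e)^s\le\Gamma(s+1)$ with $s=r-1\ge 0$. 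Alternatively, since every application of item 3) in the paper absorbs the constant into a generic $C$, you can sidestep the sharp constant entirely by interpolating between the integer exponents $q=pk$, where your series comparison is immediate.
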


\begin{theorem}(\cite[Theorem 1.1]{bhimani2022heat})
\label{dc} For $1 \leq  p,\, q\leq\infty$ and $\beta >0,$ set
$
  \sigma_\beta \coloneqq  \frac{d}{2\beta} \Big|\frac{1}{p}-\frac{1}{q}\Big|.
 $
\begin{enumerate}
\item \label{P1} 
 If  $p,\, q \in (1,  \infty),$ or $(p,\, q)=(1, \infty),$ or $p=1$ and $q \in [2, \infty),$ or  $p\in (1, \infty)$ and $q=1,$ then there exists a constant $C>0$ such that
\begin{equation}\label{eq mainthm}
\|e^{-tH^\beta} g\|_{L^q} \le  \begin{cases}
C e^{-td^\beta} \|g\|_{L^p}  &  \text{if} \quad t\geq 1,\\
 C t^{-\sigma_\beta} \|g\|_{L^p} &  \text{if} \quad   0<t\leq 1.
\end{cases} 
\end{equation} 
\item \label{P2} 
If $0<\beta \leq 1,$ then the above estimate holds for all $1\leq p,\, q \leq\infty.$
\end{enumerate}
\end{theorem}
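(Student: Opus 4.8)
The plan is to reduce everything to the case $\beta=1$, where Mehler's formula provides an explicit positive kernel, and then to pass to fractional powers by Bochner subordination. First I would treat $\beta=1$. By Mehler's formula the semigroup acts as an integral operator with the explicit symmetric kernel $M_t(x,y)=(2\pi\sinh 2t)^{-d/2}\exp(-\tfrac12(|x|^2+|y|^2)\coth 2t+\tfrac{x\cdot y}{\sinh 2t})$. A direct Gaussian computation yields three endpoint bounds: $\sup_{x,y}M_t(x,y)=(2\pi\sinh 2t)^{-d/2}$ (the $L^1\to L^\infty$ norm), $\sup_y\int M_t(x,y)\,dx=(\cosh 2t)^{-d/2}$ (the $L^1\to L^1$ and, by symmetry, $L^\infty\to L^\infty$ norms), while the spectral theorem gives $\|e^{-tH}\|_{L^2\to L^2}=e^{-td}$. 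Riesz--Thorin interpolation among these produces \eqref{eq mainthm} for $\beta=1$ and the full range; for $0<t\le1$ one reads off the $t^{-\sigma_1}$ blow-up from $(\sinh 2t)^{-d/2}\sim t^{-d/2}$, and for $t\ge1$ the decay $e^{-td}$ from $(\cosh 2t)^{-d/2}\sim e^{-td}$. The case $p>q$ (where $L^p\not\subset L^q$ globally) is handled by exploiting the confining Gaussian decay $e^{-\frac{|y|^2}{2}\tanh 2t}$ in the kernel, which supplies the missing integrability. This settles part (2) at the single exponent $\beta=1$.

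Second, for $0<\beta<1$ I would invoke subordination. Since $\lambda\mapsto\lambda^\beta$ is a Bernstein function for $0<\beta\le1$, there is a probability density $\phi_{t,\beta}$ (the one-sided $\beta$-stable subordinator) with $e^{-tH^\beta}=\int_0^\infty\phi_{t,\beta}(s)\,e^{-sH}\,ds$ and self-similarity $\phi_{t,\beta}(s)=t^{-1/\beta}\phi_{1,\beta}(st^{-1/\beta})$. Two facts make the scaling exact: the Laplace identity $\int_0^\infty\phi_{t,\beta}(s)e^{-sd}\,ds=e^{-td^\beta}$ supplies the large-time factor $e^{-td^\beta}$ for free, and the finiteness of all negative moments $\int_0^\infty\phi_{1,\beta}(u)u^{-\sigma}\,du<\infty$ (the density vanishes super-exponentially near $s=0$) converts the $\beta=1$ small-time bound into $t^{-\sigma_1/\beta}=t^{-\sigma_\beta}$ after the substitution $u=st^{-1/\beta}$. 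Integrating $\|e^{-tH^\beta}g\|_{L^q}\le\int_0^\infty\phi_{t,\beta}(s)\|e^{-sH}g\|_{L^q}\,ds$ against the step-one bounds and splitting the $s$-integral at $s=1$ then delivers part (2) for all $0<\beta\le1$ and all $1\le p,q\le\infty$; one checks that at large $t$ the small-$s$ piece is super-exponentially negligible against $e^{-td^\beta}$, and at small $t$ the large-$s$ piece is negligible against $t^{-\sigma_\beta}$.

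Third and hardest, part (1) must cover $\beta>1$, where $\lambda^\beta$ is no longer a Bernstein function and subordination fails, which is precisely why the admissible range of $(p,q)$ shrinks to the stated set. Here I would retain the bound $\|e^{-tH^\beta}\|_{L^2\to L^2}=e^{-td^\beta}$ from the spectral theorem and control the off-$L^2$ mapping through the functional calculus of $H$: writing $e^{-tH^\beta}=m_t(H)$ with $m_t(\lambda)=e^{-t\lambda^\beta}$ and using that $H$ enjoys the Gaussian heat-kernel bound established above, one applies spectral-multiplier (Mikhlin--Hörmander type) estimates valid on $L^p$ for $1<p<\infty$ but failing at the endpoints, which accounts for the exclusion of $(1,1)$ and $(\infty,\infty)$. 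The surviving endpoint pairs $(1,\infty)$, $p=1,q\ge2$, and $p\in(1,\infty),q=1$ would then follow by interpolating the $L^p$-multiplier bounds against the ultracontractive $L^1\to L^\infty$ estimate. I expect the main obstacle to be this last step: extracting the sharp $t$-dependence—both the $t^{-\sigma_\beta}$ smoothing as $t\to0$ and the $e^{-td^\beta}$ decay as $t\to\infty$—uniformly in the multiplier $m_t$ for $\beta>1$, since here the bookkeeping of the $t$-scaling through the multiplier theorem, rather than any single inequality, is where the real work lies.
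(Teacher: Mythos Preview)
The paper does not prove this statement at all: Theorem~\ref{dc} is quoted verbatim from \cite[Theorem 1.1]{bhimani2022heat} and used as a black box throughout. There is therefore no ``paper's own proof'' to compare your proposal against.

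That said, your outline is a reasonable reconstruction of how such results are typically obtained, and in fact tracks the strategy of the cited source. The $\beta=1$ case via Mehler's kernel plus Riesz--Thorin, and the passage to $0<\beta<1$ by subordination against the one-sided stable density, are both standard and essentially correct as sketched; the finiteness of negative moments of $\phi_{1,\beta}$ is the right mechanism for converting $s^{-\sigma_1}$ into $t^{-\sigma_\beta}$. Your treatment of $p>q$ via the confining Gaussian factor is also the correct idea. The one place where your sketch is genuinely incomplete is part~(1) for $\beta>1$: invoking ``Mikhlin--H\"ormander type'' multiplier theorems for $H$ is the right direction, but you have not identified which multiplier theorem applies (one needs a version adapted to operators with discrete spectrum and Gaussian heat bounds, not the classical Fourier multiplier theorem), nor have you explained how the asymmetric endpoint cases $p=1,\,q\in[2,\infty)$ and $p\in(1,\infty),\,q=1$ actually follow---interpolating an $L^p$ bound against $L^1\to L^\infty$ does not obviously reach $q=1$. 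You correctly flag this as the main obstacle, but as written it is a gap rather than a proof.
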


\begin{Remark} \label{ufr}~{\rm \begin{itemize}
    \item[($i$)] From \eqref{eq mainthm} we get
    \begin{equation}
    \label{q-q}
    \|e^{-tH^\beta} g\|_{L^q} \le C \|g\|_{L^q},\quad 1<q<\infty.
    \end{equation}
    \item[($ii$)] Since $t^{\sigma_\beta}e^{-td^\beta} \leq C$ for all $t\geq 1$, \eqref{eq mainthm} yields
    \begin{equation}
    \label{q-p}
     \|e^{-tH^\beta} g\|_{L^q} \le C t^{-\sigma_\beta}\|g\|_{L^p},\quad 0<t<\infty.
    \end{equation}
\end{itemize}
}
\end{Remark}

Fino-Kirane in   \cite[Proposition 1]{fino2020cpaa}  obtained  several  $L^q-\exp L^p$ estimates for the fractional heat propagator $e^{-t(-\Delta)^{\beta}}$ with $0<\beta \leq 2.$ See also \cite[Proposition 3.2]{majdoub2018picm} and \cite[Lemma 3.1]{furioli2017JDE}.  In the subsequent proposition, we extend this result to encompass the the fractional harmonic oscillator $H^{\beta}$, where $\beta>0$. More precisely, we establish the following generalization:
\begin{proposition} \label{lpexp}
Let $1<q \leq p< \infty, t>0$ and $1\leq r \leq \infty.$
Then 
\begin{enumerate}
\item \label{lpexp1} $\|e^{-tH^{\beta}} g\|_{\exp L^p} \leq C   \, \|g\|_{\exp L^{p}}$ for $\beta >0$.

\item \label{lpexp2}$\|e^{-tH^{\beta}} g\|_{\exp L^p} \leq C  t^{-\frac{d}{2\beta q}} \, \left( \log (t^{-\frac d{2\beta}} +1)\right)^{-\frac 1p} \, \|g\|_{L^q}$ for $\beta >0$.

\item \label{lpexp3} $\|e^{-tH^{\beta}} g\|_{\exp L^p} \leq \frac{{C}}{(\log 2)^{\frac 1p}}\left[ t^{-\frac{d}{2\beta r}} \|g\|_{L^r} +\|g\|_{L^q}\right]$ for $0<\beta \leq 1.$
\end{enumerate}
\end{proposition}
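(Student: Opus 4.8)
The plan is to reduce all three bounds to the Luxemburg--norm description of $\exp L^p$ via the power--series identity
\[
\int_{\R^d}\Big(e^{|v(x)/\lambda|^p}-1\Big)\,dx=\sum_{k=1}^\infty\frac{1}{k!}\,\frac{\|v\|_{L^{pk}}^{pk}}{\lambda^{pk}}\qquad(v\in L^1_{loc},\ \lambda>0),
\]
applied to $v=e^{-tH^\beta}g$, combined with the $L^p$--$L^q$ smoothing of the propagator recorded in Theorem \ref{dc} and Remark \ref{ufr}. I would dispatch part \eqref{lpexp3} first, since it is shortest: apply Lemma \ref{relaorL}, item~1) (legitimate because $q\le p$) to $v=e^{-tH^\beta}g$ to get $\|v\|_{\exp L^p}\le(\log 2)^{-1/p}(\|v\|_{L^q}+\|v\|_{L^\infty})$, then bound $\|v\|_{L^q}\le C\|g\|_{L^q}$ by \eqref{q-q} and $\|v\|_{L^\infty}\le C\,t^{-\frac{d}{2\beta r}}\|g\|_{L^r}$ by \eqref{q-p} with target exponent $\infty$ and source exponent $r$ --- admissible for $0<\beta\le1$ by the second part of Theorem \ref{dc} --- and add.

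For part \eqref{lpexp2}, with $1<q\le p<\infty$ fixed and $v=e^{-tH^\beta}g$, I would note that $q\le pk$ for every $k\ge1$ with $q,pk\in(1,\infty)$, so \eqref{q-p} gives
\[
\|v\|_{L^{pk}}\le C\,t^{-\frac{d}{2\beta}\left(\frac1q-\frac1{pk}\right)}\|g\|_{L^q},\qquad t>0,
\]
with a constant taken independent of $k$. Substituting into the series identity and pulling out the factor $t^{d/(2\beta)}$ turns the integral into $t^{d/(2\beta)}\sum_{k\ge1}X^k/k!=t^{d/(2\beta)}(e^{X}-1)$ with $X=\big(C\,t^{-\frac{d}{2\beta q}}\|g\|_{L^q}/\lambda\big)^{p}$. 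The admissibility constraint $\int_{\R^d}(e^{|v/\lambda|^p}-1)\,dx\le1$ then reads $e^X-1\le t^{-d/(2\beta)}$, i.e.\ $X\le\log(t^{-d/(2\beta)}+1)$, which rearranges to $\lambda\ge C\,t^{-\frac{d}{2\beta q}}\big(\log(t^{-d/(2\beta)}+1)\big)^{-1/p}\|g\|_{L^q}$; taking the infimum over admissible $\lambda$ is exactly \eqref{lpexp2}. (The inequality is in fact an equality at the optimal $\lambda$, so no logarithmic factor is lost.) Part \eqref{lpexp1} runs along the same lines but with the cruder bound $\|v\|_{L^{pk}}\le C\|g\|_{L^{pk}}$ from \eqref{q-q}: setting $\mu=\|g\|_{\exp L^p}$ and $\lambda=C\mu$, the series is dominated term--by--term by $\sum_{k\ge1}\|g\|_{L^{pk}}^{pk}/(k!\mu^{pk})\le1$, so $\|v\|_{\exp L^p}\le C\|g\|_{\exp L^p}$.

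The one point that requires care --- and which I expect to be the main obstacle --- is the uniformity in $k$ of the constant $C$ in the smoothing estimates as the integrability exponent $pk\to\infty$; without it the series in parts \eqref{lpexp1}--\eqref{lpexp2} cannot be summed. I would handle this by invoking Theorem \ref{dc} in the form in which it is stated, namely with a single constant valid over the whole admissible range of exponents, or else by interpolating the uniform bound \eqref{q-q} with an $L^q$--$L^\infty$ estimate extracted from the Mehler kernel of $e^{-tH}$ (and, for $0<\beta<1$, from the Gaussian--type upper bounds for the kernel of $e^{-tH^\beta}$ obtained by subordination) through Riesz--Thorin, the resulting constant being the maximum of two $k$--independent quantities. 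As a side remark, for $0<\beta\le1$ part \eqref{lpexp1} also admits a soft proof bypassing the series: the Mehler semigroup $e^{-tH}$ has a positive kernel and is simultaneously an $L^1$-- and an $L^\infty$--contraction, hence a contraction on every rearrangement--invariant space and in particular on $\exp L^p$, and subordination transfers the bound to $e^{-tH^\beta}$; the series argument is what is needed to treat all $\beta>0$ uniformly.
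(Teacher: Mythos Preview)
Your proposal is correct and follows essentially the same route as the paper: the power--series expansion of the Orlicz integrand combined with the $L^{pk}$--$L^{pk}$ (resp.\ $L^q$--$L^{pk}$) bounds from Theorem~\ref{dc} for parts \eqref{lpexp1}--\eqref{lpexp2}, and Lemma~\ref{relaorL}\,(1) together with the $L^q$--$L^q$ and $L^r$--$L^\infty$ smoothing for part \eqref{lpexp3}. The paper proceeds in the order \eqref{lpexp1}, \eqref{lpexp2}, \eqref{lpexp3} but otherwise the computations match yours line for line; your explicit discussion of the uniformity in $k$ of the smoothing constant, and the alternative rearrangement--invariant argument for \eqref{lpexp1}, are additional remarks not present in the paper's proof.
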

\begin{proof}
\eqref{lpexp1} By Taylor expansion and Theorem \ref{dc},  for $\lambda>0,$ we have 
\begin{align*}
  \int_{\R^d}\left(\exp \left|\frac{e^{-tH^{\beta}}g}{\lambda}\right|^p-1\right) \, dx &\leq  \sum_{k=1}^{\infty} \frac{{C^{pk}}\|g\|_{L^{pk}}^{pk}}{k!\lambda^{pk}} =\int_{\R^d}\left(\exp \left|\frac{{C}g}{\lambda}\right|^p-1\right) \, dx.
\end{align*}

Thus, we get
  \begin{align*}
   \|e^{-tH^{\beta}} g\|_{\exp L^p}&=\inf\left\{\lambda>0:\int_{\R^d}\left(\exp \left|\frac{e^{-tH^{\beta}}g}{\lambda}\right|^p-1\right) \, dx \leq 1\right\} \\
   &\leq \inf\left\{\lambda>0:\int_{\R^d}\left(\exp \left|\frac{{C}g}{\lambda}\right|^p-1\right) \, dx \leq 1\right\} ={C}\|g\|_{\exp L^{p}}.
  \end{align*}  
 \eqref{lpexp2} By Theorem \ref{dc} for $q\leq p$, we infer 
\begin{align*}
\int_{\R^d}\left(\exp \left|\frac{e^{-tH^{\beta}}g}{\lambda}\right|^p-1\right) \, dx
&\leq   \sum_{k=1}^{\infty} \frac{{C^{pk}}t^{-\frac{d}{2\beta}(\frac 1q-\frac{1}{pk}){pk}}\|g\|_{L^q}^{pk}}{k!\lambda^{pk}} \\
  &=t^{\frac{d}{2\beta}} \left(\exp \left(\frac{{C}t^{-\frac{d}{2\beta q}}\|f\|_{L^q}}{\lambda}\right)^p-1\right).
\end{align*}
This leads to
\[\|e^{-tH^{\beta}} f\|_{\exp L^p} \leq C  t^{-\frac{d}{2\beta q}} \, \left( \log (t^{-\frac d{2\beta}} +1)\right)^{-\frac 1p} \, \|f\|_{L^q}.\]
\eqref{lpexp3} {Using \eqref{exp-l-p-est} and \eqref{pl1}, we obtain }
\[\|e^{-tH^{\beta}}f\|_{\exp L^p} \leq \frac{1}{(\log 2)^{\frac 1p}}\left( \|e^{-tH^{\beta}}f\|_{L^q}+\|e^{-tH^{\beta}}f\|_{L^\infty}\right).\]
Owing to Theorem \ref{dc}, we obtain
\[\|e^{-tH^{\beta}}f\|_{\exp L^p} \leq \frac{{C}}{(\log 2)^{\frac 1p}}\left( \|f\|_{L^q}+t^{-\frac{d}{2\beta r}}\|f\|_{L^r}\right).\]
\end{proof}

{To establish local well-posedness, we also need a Lebesgue estimate for normalized Hermite functions, along with a smoothing property, as outlined below.
\begin{Lemma}
    \label{Hermite}
   Let \(\Phi_{\alpha}\), where \(\alpha \in \mathbb{N}^d\), denote the normalized Hermite functions. Then, the following holds:
   \begin{equation}
\label{Herm-est}
\|\Phi_{\alpha}\|_{L^q} \leq C\,(1+|\alpha|)^{\frac d4},\quad \forall\; 1\leq q\leq \infty,\;\alpha\in\N^d.
\end{equation}
\end{Lemma}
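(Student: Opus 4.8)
The plan is to reduce \eqref{Herm-est} to the one-dimensional case by exploiting the tensor-product structure of the Hermite functions. Writing $\alpha=(\alpha_1,\dots,\alpha_d)\in\N^d$ and letting $h_k$ denote the $L^2(\R)$-normalized one-dimensional Hermite function of order $k$, one has $\Phi_\alpha(x)=\prod_{j=1}^d h_{\alpha_j}(x_j)$, so that by Fubini
\[
\|\Phi_\alpha\|_{L^q(\R^d)}=\prod_{j=1}^d\|h_{\alpha_j}\|_{L^q(\R)},\qquad 1\le q\le\infty .
\]
Hence it suffices to establish the one-dimensional bound $\|h_k\|_{L^q(\R)}\le C\,(1+k)^{1/4}$ with $C$ independent of both $k\in\N$ and $q\in[1,\infty]$: since $\alpha_j\le|\alpha|$ for every $j$, multiplying the $d$ one-dimensional estimates gives $\|\Phi_\alpha\|_{L^q(\R^d)}\le C^d\prod_{j=1}^d(1+\alpha_j)^{1/4}\le C^d(1+|\alpha|)^{d/4}$, which is \eqref{Herm-est}.

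For the one-dimensional estimate I would interpolate three endpoint bounds. First, $\|h_k\|_{L^2(\R)}=1$ by normalization. Second, the classical uniform sup-norm bound $\|h_k\|_{L^\infty(\R)}\le C$ (in fact $\|h_k\|_{L^\infty}\sim k^{-1/12}$, but mere boundedness is all we need); this is a standard fact in the theory of Hermite functions. Third, the $L^1$ bound $\|h_k\|_{L^1(\R)}\le C(1+k)^{1/4}$, which I would obtain by combining the concentration of $h_k$ on the interval $\{|x|\lesssim\sqrt{k+1}\}$ with Cauchy--Schwarz: on a ball of radius $R_k\sim\sqrt{k+1}$ one has $\|h_k\|_{L^1(|x|\le R_k)}\le (2R_k)^{1/2}\|h_k\|_{L^2}\le C(1+k)^{1/4}$, while the tail $\{|x|>R_k\}$ contributes only $O(1)$ thanks to the superexponential decay of $h_k$ past its turning point. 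If a self-contained argument is preferred, both the tail decay and the concentration statement follow from the moment bounds $\|x^N h_k\|_{L^2(\R)}\le \big(C(k+N)\big)^{N/2}$ — proved by iterating the three-term recurrence $x h_k = c_k h_{k+1}+c_{k-1}h_{k-1}$ with $c_k=O(\sqrt{k+1})$ — together with a dyadic decomposition of $\{|x|>R_k\}$ and Chebyshev's inequality. Interpolating: for $2\le q\le\infty$, $\|h_k\|_{L^q}\le\|h_k\|_{L^2}^{2/q}\|h_k\|_{L^\infty}^{1-2/q}\le C$; for $1\le q\le2$, writing $1/q=\theta+(1-\theta)/2$ with $\theta=2/q-1\in[0,1]$, $\|h_k\|_{L^q}\le\|h_k\|_{L^1}^{\theta}\|h_k\|_{L^2}^{1-\theta}\le C(1+k)^{\theta/4}\le C(1+k)^{1/4}$. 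Alternatively, one may simply quote the sharp one-dimensional eigenfunction bounds of Koch--Tataru, $\|h_k\|_{L^p(\R)}\lesssim(2k+1)^{\rho(p)}$ with $\rho(p)\le1/4$ for all $p\in[1,\infty]$, and tensorize.

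The main obstacle, such as it is, is one of bookkeeping rather than of substance: the genuinely non-elementary ingredients are the two standard one-dimensional facts — the uniform $L^\infty$ bound $\|h_k\|_{L^\infty}\le C$ and the superexponential decay (equivalently, the $L^2$-concentration on $\{|x|\lesssim\sqrt{k+1}\}$) of the Hermite functions — after which tensorization, Cauchy--Schwarz on a ball of radius $\sim\sqrt{k+1}$, the elementary inequality $\alpha_j\le|\alpha|$, and the three-line interpolation are entirely routine. One should also check that the exponent $d/4$ is the right (and in fact sharp) one, which the above argument confirms since the $q=1$ endpoint saturates the bound.
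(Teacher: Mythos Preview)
Your proposal is correct and follows essentially the same approach as the paper: both reduce to the one-dimensional estimate $\|h_k\|_{L^q(\R)}\le C(1+k)^{1/4}$ via the tensor-product structure and then multiply using $\alpha_j\le|\alpha|$. The only difference is that the paper simply cites \cite[Lemma~1.5.2]{thangavelu1993lectures} for the one-dimensional bound, whereas you sketch a self-contained proof by interpolating the $L^1$, $L^2$, and $L^\infty$ endpoints.
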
}
\begin{proof}
{We emphasize that \eqref{Herm-est} was originally established in \cite{thangavelu1993lectures} for the one-dimensional case ($d=1$). The $d$-dimensional estimate \eqref{Herm-est} follows directly by observing that the functions $\Phi_{\alpha}$, where $\alpha \in \mathbb{N}^d$, are tensor products of one-dimensional Hermite functions. Specifically, for \(x = (x_{1}, x_{2}, \dots, x_{d})\) and \(\alpha \in \mathbb{N}^{d}\) with \(|\alpha| = \alpha_{1} + \dots + \alpha_{d}\), we have
\[
\Phi_{\alpha}(x) = \prod^{d}_{j=1} h_{\alpha_{j}}(x_{j}),
\]
where
\[
h_{k}(z) = \left(\sqrt{\pi} 2^{k} k!\right)^{-1/2} (-1)^{k} e^{\frac{z^2}{2}} \frac{d^{k}}{dz^{k}} e^{-z^2}.
\]
From \cite[Lemma 1.5.2]{thangavelu1993lectures}, we obtain the estimate
\[
\|h_{k}\|_{L^{q}} \leq C \left(1 + k\right)^{\frac{1}{4}}, \quad \forall\; 1 \leq q \leq \infty.
\]
Consequently, we derive the following estimate for $\Phi_{\alpha}$:
\[
\|\Phi_{\alpha}\|_{L^{q}} \leq C \prod_{i=1}^{d} (1 + \alpha_{i})^{\frac{1}{4}} \leq C\left(1 + |\alpha|\right)^{\frac{d}{4}}, \quad \forall\; 1 \leq q \leq \infty, \; \alpha \in \mathbb{N}^{d}.
\]
The last inequality follows from the fact that \(\alpha_{i} \leq |\alpha|\) for each \(i\). 
This completes the proof of Lemma \ref{Hermite}.}
\end{proof}
\begin{proposition} \label{conexp0} {Let $1\leq p < \infty$ and $\beta >0.$}
If $g \in \exp L^p_0(\R^d)$, then $$e^{-tH^{\beta}} g \in C([0,\infty), \exp L^p_0(\R^d)).$$
\end{proposition}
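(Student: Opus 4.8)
The plan is to show continuity of $t \mapsto e^{-tH^\beta}g$ at $t=0$ in the $\exp L^p$ norm and then to leverage the semigroup property for continuity at later times. First I would use the density statement quoted above, namely $\exp L^p_0(\R^d) = \overline{C_0^\infty(\R^d)}^{\|\cdot\|_{\exp L^p}}$, to reduce matters to the case $g \in C_0^\infty(\R^d)$. Indeed, given $g \in \exp L^p_0$ and $\varepsilon>0$, pick $\psi \in C_0^\infty$ with $\|g-\psi\|_{\exp L^p} < \varepsilon$; by Proposition~\ref{lpexp}\eqref{lpexp1} the operators $e^{-tH^\beta}$ are uniformly (in $t$) bounded on $\exp L^p$, so $\|e^{-tH^\beta}g - e^{-tH^\beta}\psi\|_{\exp L^p} \le C\varepsilon$, and the problem is reduced to proving $e^{-tH^\beta}\psi \to \psi$ in $\exp L^p$ as $t\to 0^+$ for smooth compactly supported $\psi$. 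One also needs to record that $e^{-tH^\beta}$ maps $\exp L^p_0$ into itself; this follows from the same approximation argument together with the observation that $e^{-tH^\beta}$ of a $C_0^\infty$ function lies in $L^q \cap L^\infty$ for all $q$ (using the $L^1$--$L^\infty$ and $L^q$--$L^q$ bounds of Theorem~\ref{dc}), hence in $\exp L^p_0$ by Lemma~\ref{relaorL}(1).

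Second, for $\psi \in C_0^\infty(\R^d)$ I would estimate $\|e^{-tH^\beta}\psi - \psi\|_{\exp L^p}$ via the inclusion inequality \eqref{exp-l-p-est}, which bounds the $\exp L^p$ norm by $C(\|\cdot\|_{L^q} + \|\cdot\|_{L^\infty})$ for any $1 \le q \le p$. Thus it suffices to prove $e^{-tH^\beta}\psi \to \psi$ both in $L^q$ (say $q=2$, or $q=1$) and in $L^\infty$ as $t \to 0^+$. Convergence in $L^2$ is immediate from the spectral definition $e^{-tH^\beta}\psi = \sum_k e^{-t(2k+d)^\beta}P_k\psi$ and dominated convergence on $\ell^2$ of the Hermite coefficients. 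For the $L^\infty$ (and intermediate $L^q$) convergence I would exploit the rapid decay of the Hermite expansion of a Schwartz function: writing $\psi = \sum_\alpha c_\alpha \Phi_\alpha$ with $c_\alpha$ rapidly decreasing, we have $e^{-tH^\beta}\psi - \psi = \sum_\alpha (e^{-t(2|\alpha|+d)^\beta}-1)c_\alpha \Phi_\alpha$, and using $|e^{-t(2|\alpha|+d)^\beta}-1| \le t(2|\alpha|+d)^\beta$ together with the uniform-in-$q$ bound $\|\Phi_\alpha\|_{L^q} \le C(1+|\alpha|)^{d/4}$ from Lemma~\ref{Hermite}, the series $\sum_\alpha t (2|\alpha|+d)^\beta |c_\alpha| (1+|\alpha|)^{d/4}$ converges and is $O(t)$, giving $\|e^{-tH^\beta}\psi-\psi\|_{L^q} + \|e^{-tH^\beta}\psi-\psi\|_{L^\infty} = O(t) \to 0$. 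Combining with \eqref{exp-l-p-est} yields continuity at $t=0$ for $\psi$, hence for all $g \in \exp L^p_0$ by the density argument.

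Third, continuity at a general time $t_0 > 0$ follows from the semigroup law $e^{-tH^\beta} = e^{-(t-t_0)H^\beta}e^{-t_0H^\beta}$ for $t \ge t_0$ (and symmetrically for $t \le t_0$), since $e^{-t_0 H^\beta}g \in \exp L^p_0$ and $e^{-sH^\beta} \to \mathrm{Id}$ strongly on $\exp L^p_0$ as $s \to 0^+$ by the first two steps, while the operators are uniformly bounded on $\exp L^p$; the one-sided limits match, giving two-sided continuity on $[0,\infty)$.

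The main obstacle I anticipate is the $L^\infty$-convergence step: unlike $L^2$ convergence which is free from the spectral calculus, controlling $\|e^{-tH^\beta}\psi - \psi\|_{L^\infty}$ requires quantitative summability of the Hermite series, and this is exactly where Lemma~\ref{Hermite} — the uniform-in-$q$ polynomial bound on $\|\Phi_\alpha\|_{L^q}$ — is essential, combined with the Schwartz-class decay of the coefficients $c_\alpha$ of a $C_0^\infty$ function. (One should double-check that a $C_0^\infty$ function has rapidly decreasing Hermite coefficients; this is standard because $H$ maps the Schwartz space to itself and $H^N\psi \in L^2$ for all $N$ forces $(1+|\alpha|)^N c_\alpha \in \ell^2$.) Everything else is routine bookkeeping with the inclusion estimates of Lemma~\ref{relaorL} and the boundedness estimates of Proposition~\ref{lpexp}.
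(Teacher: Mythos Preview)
Your proposal is correct and follows essentially the same approach as the paper: reduce to $C_0^\infty$ by density, then control $\|e^{-tH^\beta}\psi-\psi\|_{L^q}$ uniformly in $1\le q\le\infty$ via the Hermite expansion, using Lemma~\ref{Hermite} together with the rapid decay of the Hermite coefficients of a Schwartz function (the paper obtains this decay by the integration-by-parts identity $\langle g,\Phi_\alpha\rangle=(d+2|\alpha|)^{-N}\langle H^N g,\Phi_\alpha\rangle$, which is exactly the mechanism you allude to). Your write-up is in fact slightly more complete than the paper's, since you explicitly address the invariance of $\exp L^p_0$ under $e^{-tH^\beta}$ and the continuity at a general $t_0>0$ via the semigroup law.
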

\begin{proof}
The proof of this result uses similar idea as in \cite{majdoub2018ade, fino2020cpaa}. {By density of $C_0^\infty(\R^d)$ in $\exp L^p_0(\R^d)$, it suffices to show that
\begin{equation}
\label{Density}
\lim_{t\to 0}\left\| e^{-tH^{\beta}} g-g\right\|_{L^q}=0
\end{equation}
for all $g\in C_0^\infty(\R^d)$ and $1\leq q \leq \infty$.} We note that 
\[e^{-t H^{\beta}}g(x)=   \sum_{k=0}^\infty e^{-t(2k+d)^{\beta}} P_kg(x),~~ P_kg=\sum_{|\alpha|=k}\langle g,\Phi_{\alpha} \rangle \, \Phi_{\alpha},\]
where $\Phi_{\alpha},~\alpha \in \mathbb{N}^d$, are the normalized Hermite functions.  Now, since $H \Phi_{\alpha}=(2|\alpha|+d) \, \Phi_{\alpha}$, an integration by parts yields 
\[\langle g, \Phi_{\alpha}\rangle=(d+2|\alpha|)^{-N} \, \langle H^Ng, \Phi_{\alpha}\rangle, \; N\in \N.\]
{Since \(H^N g \in C_0^\infty(\mathbb{R}^d)\), we can apply Hölder's inequality along with \eqref{Herm-est} to obtain} \[|\langle g, \Phi_{\alpha}\rangle|\leq C (d+2|\alpha|)^{-N+\frac d4}\,\|H^N\,g\|_{L^{q'}}.\]

Hence 
\begin{eqnarray*}
\|P_k g\|_{L^q} &\leq& C\sum_{|\alpha|=k} \,(d+2|\alpha|)^{-N+\frac{d}{2}}\|H^N\,g\|_{L^{q'}}\\
&\leq& C (d+k)^{-N+\frac{d}{2}} \sum_{|\alpha|=k} 1\\
&\leq& C(d+k)^{-N+\frac{3d}{2}-1},
\end{eqnarray*}
where we have used the fact that
$$
\sum_{|\alpha|=k} 1=\begin{pmatrix}
           k+d-1 \\
           k 
         \end{pmatrix}
         \lesssim (d+k)^{d-1}.
$$

Since $g\in C_0^\infty(\R^d)$, we get
\begin{eqnarray} \label{E2.6}
    \left\| e^{-tH^{\beta}} g-g\right\|_{L^q}&=&\left\| \sum_{k=0}^\infty [e^{-t(2k+d)^{\beta}} P_kg-P_kg]\right\|_{L^q} \notag\\
    &\leq& {\sum_{k=0}^{\infty} \left(1-e^{-t(2k+d)^{\beta}}\right)\left\| P_kg\right\|_{L^q}}\notag\\
    &\leq& C \sum_{k=0}^{\infty} \left(1-e^{-t(2k+d)^{\beta}}\right) \, (d+k)^{{-N+\frac{3d}{2}-1}}.
\end{eqnarray}
Therefore, by taking $N$ large enough and the limit as $t\to 0$ in \eqref{E2.6}, we infer
\[\lim_{t\to 0}\left\| e^{-tH^{\beta}} g-g\right\|_{L^q}=0.\] This completes the proof.
\end{proof}

As a consequence, we have the following:
\begin{Corollary} \label{C2.3}
    Let ${0<\beta\leq 1}, 
  ~p>1,~d>\frac{2\beta p}{p-1},~r>\frac d{2\beta}.$  Then, for every $g\in L^1 (\mathbb R^d) \cap L^r (\mathbb R^d)$, we have
    \[\|e^{-tH^{\beta}} g\|_{\exp L^p} \leq \kappa(t)   \, [\|g\|_{L^1 }+\|g\|_{L^r}],~\forall\, t>0,\]
    where $\kappa \in L^1(0,\infty)$ is given by
    \[\kappa(t)=\frac{{C}} {(\log 2)^{\frac 1p}} \, \min \left\{t^{-\frac{d}{2\beta r}}+1,t^{-\frac{d}{2\beta}}(\log(t^{-\frac{d}{2\beta}}+1))^{-\frac 1p} \right\}.\]
\end{Corollary}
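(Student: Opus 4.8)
The plan is to prove separately the two bounds that are combined in the minimum defining $\kappa$, take their intersection, and then — this being the only genuinely non-mechanical point — verify that the resulting $\kappa$ lies in $L^1(0,\infty)$; this last verification is exactly where the two standing hypotheses $r>\frac{d}{2\beta}$ and $d>\frac{2\beta p}{p-1}$ are consumed. Everything else is assembly of Proposition \ref{lpexp} and Theorem \ref{dc}.

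First I would note that the interval $(1,\min\{p,r\}]$ is nonempty: indeed $p>1$ by assumption, and since $d>\frac{2\beta p}{p-1}\geq 2\beta$ we also get $r>\frac{d}{2\beta}>1$. Fix any $q_0$ in this interval. For the first bound, apply part \eqref{lpexp3} of Proposition \ref{lpexp} with the pair $(q_0,r)$ (legitimate because $0<\beta\leq 1$) to get $\|e^{-tH^{\beta}}g\|_{\exp L^p}\leq\frac{C}{(\log 2)^{1/p}}\big(t^{-\frac{d}{2\beta r}}\|g\|_{L^r}+\|g\|_{L^{q_0}}\big)$, and then bound $\|g\|_{L^{q_0}}\leq\|g\|_{L^1}+\|g\|_{L^r}$ via log-convexity of $L^p$-norms together with Young's inequality (using $1\leq q_0\leq r$). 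This yields $\|e^{-tH^{\beta}}g\|_{\exp L^p}\leq\frac{C}{(\log 2)^{1/p}}\big(t^{-\frac{d}{2\beta r}}+1\big)\big(\|g\|_{L^1}+\|g\|_{L^r}\big)$.

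For the second bound I would rerun the computation in the proof of part \eqref{lpexp2} of Proposition \ref{lpexp} verbatim, but with source exponent $q=1$. This is now permissible precisely because $0<\beta\leq 1$: by condition \eqref{P2} of Theorem \ref{dc} (equivalently by \eqref{q-p}) one has $\|e^{-tH^{\beta}}g\|_{L^{pk}}\leq C\,t^{-\frac{d}{2\beta}(1-\frac{1}{pk})}\|g\|_{L^1}$ for every $k\geq 1$. Expanding $\exp\big(|e^{-tH^{\beta}}g/\lambda|^{p}\big)-1$ in its power series, summing, and optimizing in $\lambda$ exactly as in that proof produces $\|e^{-tH^{\beta}}g\|_{\exp L^p}\leq C\,t^{-\frac{d}{2\beta}}\big(\log(t^{-\frac{d}{2\beta}}+1)\big)^{-1/p}\|g\|_{L^1}$; I then weaken $\|g\|_{L^1}$ to $\|g\|_{L^1}+\|g\|_{L^r}$ and absorb the constant (note $(\log 2)^{-1/p}\geq 1$). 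Taking the minimum of the two displayed bounds gives the asserted inequality with $\kappa$ as stated.

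It remains to check $\kappa\in L^1(0,\infty)$. On $(0,1)$ I bound $\kappa$ by its first branch $\frac{C}{(\log 2)^{1/p}}\big(t^{-\frac{d}{2\beta r}}+1\big)$, which is integrable there precisely because $r>\frac{d}{2\beta}$ forces $\frac{d}{2\beta r}<1$. On $(1,\infty)$ I bound $\kappa$ by its second branch and use that for $t\geq 1$ one has $0<t^{-\frac{d}{2\beta}}\leq 1$, hence $\log(1+t^{-\frac{d}{2\beta}})\geq(\log 2)\,t^{-\frac{d}{2\beta}}$; this gives $\kappa(t)\leq C\,t^{-\frac{d}{2\beta}}\,t^{\frac{d}{2\beta p}}=C\,t^{-\frac{d(p-1)}{2\beta p}}$, integrable on $(1,\infty)$ precisely because $d>\frac{2\beta p}{p-1}$ forces the exponent $\frac{d(p-1)}{2\beta p}$ to exceed $1$. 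I do not expect a real obstacle here; the only subtlety worth flagging is that the sharp $t^{-d/2\beta}$ rate with logarithmic correction originates from the $q=1$ endpoint of the exponential smoothing estimate, which is available only under the restriction $0<\beta\leq 1$ built into the hypotheses.
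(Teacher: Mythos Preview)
Your proof is correct and follows essentially the same approach as the paper, which also derives the two branches of $\kappa$ from parts (2) and (3) of Proposition~\ref{lpexp} with source exponent $q=1$ and then invokes the hypotheses $r>\frac{d}{2\beta}$ and $d>\frac{2\beta p}{p-1}$ for the integrability of $\kappa$. Your version is slightly more careful: you work around the stated restriction $q>1$ in Proposition~\ref{lpexp} by interpolating $L^{q_0}$ between $L^1$ and $L^r$ (the paper simply takes $q=1$, which is justified since Theorem~\ref{dc}\eqref{P2} covers the endpoint when $\beta\leq 1$), and you spell out the integrability computation that the paper leaves as a one-line assertion.
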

\begin{proof}
 By Proposition \ref{lpexp} $(2)$ with $q=1$,  we have
 \begin{align} \label{E1}
 \|e^{-tH^{\beta}} g\|_{\exp L^p} \leq C  t^{-\frac{d}{2\beta}} \, \left( \log (t^{-\frac d{2\beta}} +1)\right)^{-\frac 1p} \, \|g\|_{L^1}.
 \end{align}
 On the other hand, by Proposition \ref{lpexp} $(3)$ with $q=1$, we obtain
 \begin{align} \label{E2}
\|e^{-tH^{\beta}} g\|_{\exp L^p}& \leq \frac{{C}}{(\log 2)^{\frac 1p}}\left[ t^{-\frac{d}{2 \beta r}} \|g\|_{L^r} +\|g\|_{L^1}\right],\notag\\
&\leq\frac{{C}}{(\log 2)^{\frac 1p}} (t^{-\frac{d}{2 \beta r}}+1) \left[\|g\|_{L^r} +\|g\|_{L^1}\right].
 \end{align}
Combining \eqref{E1} and \eqref{E2}, we obtain
\[\|e^{-tH^{\beta}} g\|_{\exp L^p} \leq \kappa(t)   \, \left[\|g\|_{L^1 }+\|g\|_{L^r}\right],~\forall\, t>0.\]
Thanks to the assumptions $d>\frac{2\beta p}{p-1}$ and $r>\frac d{2\beta}$, we see that $\kappa \in L^1(0,\infty)$.
\end{proof}
For $d=\frac{2\beta p}{p-1}$, we also have similar result in some suitable Orlicz space. Let $\phi(s):=e^{s^p}-1-s^p,~s\geq 0$ and $L^{\phi}$ be the associated Orlicz space endowed with the Luxemburg norm \eqref{Luxnorm}. From the definition, we have 
\begin{align} \label{E3.2}
C_1 \|g\|_{\exp L^p} \leq \|g\|_{L^p}+\|g\|_{L^{\phi}} \leq C_2 \|g\|_{\exp L^p},
\end{align}
for some $C_1,~C_2>0.$
\begin{Corollary} \label{C3.2}
    Let ${0<\beta\leq 1}, 
  ~p>1,~r>\frac d{2\beta}=\frac{p}{p-1}.$ For every $g \in L^1 (\mathbb R^d) \cap L^{2p} (\mathbb R^d)\cap L^r (\mathbb R^d),$ we have 
    \[\|e^{-tH^{\beta}} g\|_{ L^{\phi}} \leq \zeta(t)   \, \left[\|g\|_{L^1 }+ \|g\|_{L^{2p}}+\|g\|_{L^r}\right],~\forall ~t>0,\]
    where $\zeta \in L^1(0,\infty)$ is given by
    \[\zeta(t)=\frac{{C}}{(\log 2)^{\frac 1p}} \, \min \left\{t^{-\frac{d}{2\beta r}}+1,t^{-\frac{p}{p-1}}(\log(t^{-\frac{p}{p-1}}+1))^{-\frac 1{2p}} \right\}.\]
\end{Corollary}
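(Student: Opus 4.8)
The plan is to mirror the proof of Corollary \ref{C2.3}, replacing the endpoint role of $L^1$-to-$\exp L^p$ smoothing (which requires $d > \frac{2\beta p}{p-1}$) with the critical-exponent smoothing into the Orlicz space $L^\phi$ associated to $\phi(s)=e^{s^p}-1-s^p$. The key point is that $L^\phi$ is designed precisely to absorb the logarithmic loss that appears when $\frac{d}{2\beta}=\frac{p}{p-1}$, i.e. when $\frac1q - \frac1{pk}$ approaches $\frac1q$ at the borderline rate. First I would record, by the same Taylor-expansion argument used in Proposition \ref{lpexp}\eqref{lpexp2} but keeping the $k=1$ term separate (since $\phi$ subtracts off $s^p$, the series starts at $k=2$), an estimate of the form
\[
\|e^{-tH^\beta} g\|_{L^\phi} \leq C\, t^{-\frac{p}{p-1}}\bigl(\log(t^{-\frac{p}{p-1}}+1)\bigr)^{-\frac1{2p}}\bigl(\|g\|_{L^1}+\|g\|_{L^{2p}}\bigr),
\]
using Theorem \ref{dc} with $(p,q)=(1,pk)$ for $k\geq 2$ together with the $L^{2p}\to L^{2p}$ bound \eqref{q-q} for the residual low-order term, and summing the resulting exponential series. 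This handles small $t$.

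Second, for the large-$t$ (or simply "other") regime I would use Proposition \ref{lpexp}\eqref{lpexp3}-type reasoning adapted to $L^\phi$: by Lemma \ref{relaorL}\,2) one has $\|u\|_{L^\phi} \leq C(p)(\|u\|_{L^q}+\|u\|_{L^\infty})$ for $q\leq 2p$, so applying Theorem \ref{dc} (valid for all $1\leq p,q\leq\infty$ since $0<\beta\leq1$) with $q=2p$ and with $L^\infty$ coming from $L^r$ gives
\[
\|e^{-tH^\beta} g\|_{L^\phi} \leq C\bigl(t^{-\frac{d}{2\beta r}}\|g\|_{L^r}+\|g\|_{L^{2p}}\bigr)\leq C\,(t^{-\frac{d}{2\beta r}}+1)\bigl(\|g\|_{L^1}+\|g\|_{L^{2p}}+\|g\|_{L^r}\bigr).
\]
Third, I would combine the two bounds by taking the minimum, exactly as in Corollary \ref{C2.3}, to get the stated inequality with $\zeta(t)$. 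Finally, the integrability $\zeta\in L^1(0,\infty)$ follows by splitting at $t=1$: near $t=0$ the factor $t^{-\frac{p}{p-1}}(\log(t^{-\frac{p}{p-1}}+1))^{-\frac1{2p}}$ is integrable because $\frac{p}{p-1}$ is exactly the critical power $1$ in the relevant change of variables — more precisely, after substituting, the singularity is $\tau^{-1}(\log\tau)^{-1/(2p)}$ type whose integrability near infinity needs $\frac1{2p}>1$... which is false, so one must instead note that near $t=0$ we take the $t^{-\frac{d}{2\beta r}}+1$ branch (since $r>\frac{d}{2\beta}$ forces $\frac{d}{2\beta r}<1$, this is integrable on $(0,1)$), and near $t=\infty$ the logarithmic branch $t^{-\frac{p}{p-1}}(\cdots)^{-1/(2p)}$ decays like $t^{-p/(p-1)}$ up to a slowly varying factor, which is integrable since $\frac{p}{p-1}>1$.

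The main obstacle I anticipate is the bookkeeping in the Taylor-expansion step for $L^\phi$: because $\phi(s)=e^{s^p}-1-s^p$ removes the linear-in-$s^p$ term, one must carefully isolate the $k=1$ contribution (which Theorem \ref{dc} controls only as $L^{2p}\to L^{2p}$, contributing the $\|g\|_{L^{2p}}$ term rather than a power of $t$) and then verify that the tail $\sum_{k\geq 2}$ reassembles into $\phi$ of a rescaled argument with the correct prefactor $t^{-p/(p-1)}(\log(\cdots))^{-1/(2p)}$; getting the exponent $-\frac1{2p}$ rather than $-\frac1p$ on the logarithm is the delicate bit and comes from the leading $k=2$ term dominating the borderline sum. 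Everything else is a routine transcription of the arguments already carried out for Proposition \ref{lpexp} and Corollary \ref{C2.3}.
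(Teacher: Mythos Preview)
Your approach is essentially the paper's: two separate bounds (one from a Taylor-expansion smoothing estimate into $L^\phi$, the other from the embedding $L^{2p}\cap L^\infty\hookrightarrow L^\phi$), then take the minimum and check integrability. Two points of confusion, however, deserve correction.

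First, there is no ``residual $k=1$ term'' to isolate. Since $\phi(s)=e^{s^p}-1-s^p=\sum_{k\geq 2}\frac{s^{pk}}{k!}$, the Luxemburg integral $\int\phi(|e^{-tH^\beta}g|/\lambda)\,dx$ is already a sum over $k\geq 2$, and every term is controlled by the $L^1\to L^{pk}$ smoothing from Theorem \ref{dc}. The first bound in the paper is therefore purely in terms of $\|g\|_{L^1}$; the $\|g\|_{L^{2p}}$ norm enters only through the second (embedding) bound. Your insertion of $\|g\|_{L^{2p}}$ into the first inequality is harmless but reflects a misunderstanding of where that norm is actually needed.

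Second, the exponent $-\tfrac{1}{2p}$ on the logarithm does not come from ``the leading $k=2$ term dominating''. After summing, one has $\int\phi(|e^{-tH^\beta}g|/\lambda)\,dx\leq t^{\frac{p}{p-1}}\,\phi(X)$ with $X=Ct^{-\frac{p}{p-1}}\|g\|_{L^1}/\lambda$; the paper then invokes the elementary inequality $e^s-1-s\leq e^{s^2}-1$ (applied with $s=X^p$), which converts the condition $t^{\frac{p}{p-1}}\phi(X)\leq 1$ into $X^{2p}\leq\log(t^{-\frac{p}{p-1}}+1)$. This is what produces the $2p$-th root of the logarithm. Without this inequality your argument for the first bound is incomplete.

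Your integrability check is fine once straightened out: near $t=0$ one uses the branch $t^{-\frac{d}{2\beta r}}+1$ (integrable since $\tfrac{d}{2\beta r}<1$), and near $t=\infty$ the other branch behaves like $t^{-\frac{2p-1}{2(p-1)}}$, which is integrable since $\tfrac{2p-1}{2(p-1)}>1$. Note the log factor is not ``slowly varying'' at infinity---it grows polynomially---so compute the combined exponent explicitly.
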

\begin{proof}
 In view of Proposition \ref{lpexp}, we obtain
 \begin{align*}
     \int_{\R^d} \phi\left(\frac{|e^{-tH^{\beta}}g|}{\lambda}\right) \, dx &=\sum_{k\geq 2} \frac{\|e^{-tH^{\beta}}g\|_{L^{pk}}^{pk}}{\lambda^{pk} k!}\\
     &\leq \sum_{k\geq 2} \frac{{C^{pk}}t^{-\frac d{2\beta}(1-\frac 1{pk})pk}\|g\|_{L^1}^{pk}}{\lambda^{pk} k!}\\
     &=\sum_{k\geq 2} \frac{{C^{pk}}t^{-\frac {p}{p-1}(1-\frac 1{pk})pk}\|g\|_{L^1}^{pk}}{\lambda^{pk} k!}=t^{\frac p{p-1}} \, \phi\left({C}t^{-\frac p{p-1}}\frac{\|g\|_{L^1}}{\lambda}\right)\\
     &\leq t^{\frac p{p-1}} \, \left(\exp \left\{\left({C}t^{-\frac p{p-1}}\frac{\|g\|_{L^1}}{\lambda}\right)^{2p}\right\}-1\right).
 \end{align*}
In the last step we have used the fact that $e^s-1-s\leq e^{s^2}-1$ for every $s\geq 0.$ Thus we obtain that
 \begin{align*}
     \|e^{-tH^{\beta}} g\|_{ L^{\phi}}&\leq \inf \left\{\lambda>0: t^{\frac p{p-1}} \, \left(\exp \left\{\left({C}t^{-\frac p{p-1}}\frac{\|g\|_{L^1}}{\lambda}\right)^{2p}\right\}-1\right) \leq 1\right\}\\
     &={C}t^{-\frac p{p-1}} \left(\log (t^{-\frac p{p-1}}+1)\right)^{-\frac 1{2p}} \|g\|_{L^1}.
 \end{align*}
 In view of the embedding $L^{2p}\cap L^{\infty} \rightarrow L^{\phi},$ we also have 
 \[\|e^{-tH^{\beta}} g\|_{ L^{\phi}} \leq (\log 2)^{-\frac 1p}\left[\|e^{-tH^{\beta}} g\|_{L^{\infty}}+\|e^{-tH^{\beta}} g\|_{L^{2p}}\right].\]
 By utilizing Proposition \ref{lpexp} and selecting $r>\frac d{2\beta}=\frac p{p-1}$, we deduce that
 \[\|e^{-tH^{\beta}} g\|_{ L^{\phi}} \leq (\log 2)^{-\frac 1p}\left[t^{-\frac d{2\beta r}}\| g\|_{L^{r}}+\| g\|_{L^{2p}}\right].\]
 Combining above inequalities, we get
\[\|e^{-tH^{\beta}} g\|_{ L^{\phi}} \leq \zeta(t)   \, [\|g\|_{L^1 }+ \|g\|_{L^{2p}}+\|g\|_{L^r}],~\forall ~ t>0.\]
Since $\frac{d}{2\beta r}<1$ and $\frac p{p-1}-\frac p{p-1} \frac 1{2p}=\frac{2p}{2(p-1)}>1$, we see that $\zeta \in L^1(0,\infty)$.
\end{proof}
\begin{Lemma}(\cite[Lemma  4.1.5]{cazenave1990introduction}). \label{L3.3}
Let $X$ be a Banach space and $g \in L^1(0,T;X).$ Then, for any $\beta>0$, we have $$t\longmapsto \int_0^t\,e^{-(t-\tau)H^{\beta}} \, g(\tau) \, d\tau \in C([0,T];X).$$
\end{Lemma}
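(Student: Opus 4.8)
The plan is to reproduce the classical argument for the continuity of the Duhamel (convolution) term, following \cite[Lemma 4.1.5]{cazenave1990introduction}. The only structural features of the propagator that are needed are that $e^{-tH^{\beta}}$ is a semigroup of bounded operators on $X$ which is uniformly bounded on $[0,T]$,
\[
M_T:=\sup_{0\leq t\leq T}\big\|e^{-tH^{\beta}}\big\|_{\mathcal L(X)}<\infty,
\]
and that for every fixed $v\in X$ the orbit $t\mapsto e^{-tH^{\beta}}v$ is continuous on $[0,T]$; in the function spaces in which this lemma is invoked (e.g.\ $X=\exp L^p_0(\R^d)$ in the proof of Theorem \ref{T1.1}) these are provided by Theorem \ref{dc}, Proposition \ref{lpexp} and Proposition \ref{conexp0}. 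Throughout write $\Psi_g(t):=\int_0^{t}e^{-(t-\tau)H^{\beta}}g(\tau)\,d\tau$; this is a well-defined Bochner integral, since $\tau\mapsto e^{-(t-\tau)H^{\beta}}g(\tau)$ is strongly measurable on $(0,t)$ and bounded in norm by $M_T\,\|g(\tau)\|_X\in L^1(0,T)$.

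\textbf{Step 1: step functions.} First I would prove the claim for an $X$-valued step function $g$; by linearity it suffices to take $g=\chi_{(a,b)}v$ with $0\leq a<b\leq T$ and $v\in X$. Set $F(\sigma):=\int_0^{\sigma}e^{-rH^{\beta}}v\,dr$ for $\sigma\in[0,T]$. Since $\|e^{-rH^{\beta}}v\|_X\leq M_T\|v\|_X$, the function $F$ is Lipschitz on $[0,T]$, with $\|F(\sigma)-F(\sigma')\|_X\leq M_T\|v\|_X\,|\sigma-\sigma'|$. Splitting the integral defining $\Psi_g(t)$ according to $t\leq a$, $a<t\leq b$ and $t>b$ and performing the change of variable $r=t-\tau$, one obtains the closed form
\[
\Psi_g(t)=F\big((t-a)_{+}\big)-F\big((t-b)_{+}\big),\qquad t\in[0,T],
\]
which is manifestly continuous in $t$, including at the breakpoints $t=a,b$ and at the endpoints $t=0,T$. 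Summing over the finitely many subintervals yields $\Psi_g\in C([0,T];X)$ for every step function $g$.

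\textbf{Step 2: density and uniform passage to the limit.} For an arbitrary $g\in L^1(0,T;X)$ I would choose step functions $g_n\to g$ in $L^1(0,T;X)$. Then, uniformly in $t\in[0,T]$,
\[
\|\Psi_{g_n}(t)-\Psi_g(t)\|_X\leq\int_0^{t}\big\|e^{-(t-\tau)H^{\beta}}\big\|_{\mathcal L(X)}\,\|g_n(\tau)-g(\tau)\|_X\,d\tau\leq M_T\,\|g_n-g\|_{L^1(0,T;X)},
\]
which tends to $0$ as $n\to\infty$. Hence $\Psi_{g_n}\to\Psi_g$ uniformly on $[0,T]$; since each $\Psi_{g_n}$ is continuous by Step 1 and a uniform limit of continuous $X$-valued functions is continuous, $\Psi_g\in C([0,T];X)$, which is the assertion.

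I do not expect a genuine obstacle: this is the standard Cazenave-type density argument. The only point requiring care is Step 1, namely deriving the explicit formula for $\Psi_g$ on a single interval and checking continuity across the breakpoints and at $t=0,T$ — this is precisely where boundedness and integrability of the semigroup orbit are used. The single modeling remark worth stating explicitly in the write-up is that $e^{-tH^{\beta}}$ does act as a uniformly bounded, strongly continuous semigroup on each Banach space $X$ for which the lemma will be invoked.
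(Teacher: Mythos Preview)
The paper does not give its own proof of this lemma; it simply cites \cite[Lemma~4.1.5]{cazenave1990introduction}. Your argument is exactly the standard one from that reference: reduce to step functions, compute the Duhamel term explicitly as a difference of Lipschitz primitives, and pass to the limit by density using the uniform operator bound $M_T$. The proof is correct, and the closing caveat --- that one must check $e^{-tH^\beta}$ is a uniformly bounded $C_0$-semigroup on the particular space $X$ at hand --- is precisely the right thing to flag, since the lemma as stated is abstract and this hypothesis is what makes it applicable.
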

\begin{proposition}(\cite[Proposition 2.9]{majdoub2018picm}).
 \label{pnlex}
Let $1\leq p <\infty$ and $u \in C([0,T]; \exp L^p_0(\R^d))$ for some $T>0.$ Then, for every $\lambda>0,$ we have
\[(e^{\lambda |u|^p}-1) \in C([0,T], L^r(\R^d)), ~1\leq r<\infty.\]
\end{proposition}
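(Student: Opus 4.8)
The plan is to realize $t\mapsto e^{\lambda|u(t)|^{p}}-1$ as the composition of the given curve $u\in C([0,T];\exp L^{p}_{0}(\R^{d}))$ with the superposition operator $N_{\lambda}\colon v\mapsto e^{\lambda|v|^{p}}-1$, and to prove that for each fixed $\lambda>0$ and $1\le r<\infty$ the operator $N_{\lambda}$ maps $\exp L^{p}_{0}(\R^{d})$ continuously into $L^{r}(\R^{d})$; composing then gives the statement. I would first record the elementary inequality $(e^{s}-1)^{r}\le e^{rs}-1$, valid for all $s\ge 0$ and $r\ge 1$ (both sides vanish at $s=0$ and the left-hand derivative is the smaller, since $(e^{s}-1)^{r-1}\le e^{(r-1)s}$). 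With $s=\lambda|v|^{p}$ this yields $(e^{\lambda|v|^{p}}-1)^{r}\le e^{r\lambda|v|^{p}}-1$, and the right-hand side is integrable over $\R^{d}$ precisely because $v\in\exp L^{p}_{0}(\R^{d})$ (the defining finiteness holds for every positive coefficient, in particular $r\lambda$); hence $N_{\lambda}$ is well defined with values in $L^{r}(\R^{d})$, and the same bound, applied to a dominating function, will carry the convergence step and sidestep any uniform-integrability issue.

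Since $\exp L^{p}_{0}$ and $L^{r}$ are normed spaces, it suffices to prove sequential continuity, and by the subsequence principle it is enough to show that, given $v_{n}\to v$ in $\exp L^{p}_{0}$, every subsequence has a further subsequence along which $N_{\lambda}(v_{n})\to N_{\lambda}(v)$ in $L^{r}$. Relabelling such a subsequence and thinning it, we may assume $\|v_{n+1}-v_{n}\|_{\exp L^{p}}\le 2^{-n}$; put $g:=|v_{1}|+\sum_{n\ge 1}|v_{n+1}-v_{n}|$. Because $\exp L^{p}_{0}(\R^{d})=\overline{C_{0}^{\infty}(\R^{d})}^{\,\|\cdot\|_{\exp L^{p}}}$ is a closed subspace of the Banach space $\exp L^{p}(\R^{d})$, hence Banach, and because $w\in\exp L^{p}_{0}$ iff $|w|\in\exp L^{p}_{0}$ while $\sum_{n}\bigl\|\,|v_{n+1}-v_{n}|\,\bigr\|_{\exp L^{p}}\le\sum_{n}2^{-n}<\infty$, the series defining $g$ converges in $\exp L^{p}_{0}$; thus $g\in\exp L^{p}_{0}(\R^{d})$, and in particular $g\in L^{1}_{\mathrm{loc}}(\R^{d})$ is finite a.e. Moreover $\exp L^{p}\hookrightarrow L^{p}(\R^{d})$ by Lemma \ref{relaorL}, so $v_{n}\to v$ in $L^{p}(\R^{d})$ and a further subsequence converges a.e.; comparing this with the a.e.\ telescoping limit $v_{1}+\sum_{k\ge 1}(v_{k+1}-v_{k})$ identifies the a.e.\ limit with $v$, while $|v_{n}|\le|v_{1}|+\sum_{k\ge 1}|v_{k+1}-v_{k}|=g$ a.e.\ for every $n$.

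With this domination at hand the argument closes at once: $N_{\lambda}(v_{n})\to N_{\lambda}(v)$ pointwise a.e.\ by continuity of $s\mapsto e^{\lambda s^{p}}-1$, and $|N_{\lambda}(v_{n})|^{r}\le(e^{\lambda g^{p}}-1)^{r}\le e^{r\lambda g^{p}}-1\in L^{1}(\R^{d})$, so the dominated convergence theorem gives $\|N_{\lambda}(v_{n})-N_{\lambda}(v)\|_{L^{r}}\to 0$. Undoing the subsequence reduction yields continuity of $N_{\lambda}\colon\exp L^{p}_{0}\to L^{r}$, and composing with $u$ proves $t\mapsto e^{\lambda|u(t)|^{p}}-1\in C([0,T];L^{r}(\R^{d}))$ for every $1\le r<\infty$. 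The step I expect to demand the most care is this Orlicz version of the Riesz--Fischer argument---producing a fast-converging subsequence dominated by a single $g\in\exp L^{p}_{0}$ and identifying its a.e.\ limit---which rests on completeness of $\exp L^{p}$, on $\exp L^{p}_{0}$ being a closed subspace stable under $w\mapsto|w|$, and on the embedding $\exp L^{p}\hookrightarrow L^{p}$; the remaining estimates are routine.
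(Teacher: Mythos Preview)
Your argument is correct. The paper does not supply its own proof of this proposition; it simply quotes \cite[Proposition 2.9]{majdoub2018picm}, so there is no in-paper argument to compare against. Your route---recasting the statement as continuity of the superposition operator $N_\lambda\colon\exp L^p_0\to L^r$, then running a Riesz--Fischer-style subsequence argument to produce a single dominating function $g\in\exp L^p_0$ and closing with dominated convergence against $e^{r\lambda g^p}-1\in L^1$---is sound. The elementary inequality $(e^s-1)^r\le e^{rs}-1$ for $r\ge 1$, the completeness of $\exp L^p_0$ as a closed subspace of $\exp L^p$, its stability under $w\mapsto|w|$, and the embedding $\exp L^p\hookrightarrow L^p$ are all used correctly; the identification of the a.e.\ limit with $v$ via the telescoping series and the $L^p$ convergence is also handled properly. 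One minor streamlining: once you have the fast-converging subsequence with $\sum_n\|v_{n+1}-v_n\|_{\exp L^p}<\infty$, the finiteness of $g$ a.e.\ already forces $v_n\to v$ a.e.\ along that very subsequence (the telescoping sum converges absolutely a.e., and the $L^p$ limit pins down the value), so the ``further subsequence'' pass is not needed---but this is cosmetic, not a gap.
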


\begin{Corollary}[\cite{majdoub2018picm}]
Let $1\leq p <\infty$ and $u \in C([0,T]; \exp L^p_0(\R^d))$ for some $T>0.$ Assume that $f$ satisfies \eqref{nflw}. Then, for every $p\leq r<\infty,$ we have
\[f(u) \in C([0,T];L^r(\R^d)).\]
\end{Corollary}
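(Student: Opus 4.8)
The goal is to upgrade the mapping property $u \mapsto (e^{\lambda|u|^p}-1) \in C([0,T];L^r)$ from Proposition \ref{pnlex} to the statement $f(u) \in C([0,T];L^r)$ for $u \in C([0,T];\exp L^p_0(\R^d))$ under assumption \eqref{nflw}. The plan is as follows. First I would check that $f(u(t)) \in L^r(\R^d)$ pointwise in $t$: applying \eqref{nflw} with $v=0$ and $f(0)=0$ gives $|f(u)| \le C|u|(e^{\lambda|u|^p}+1)$, so $|f(u)|^r \le C^r |u|^r (e^{\lambda|u|^p}+1)^r \lesssim |u|^r e^{r\lambda|u|^p} + |u|^r$. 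The first term I control by Cauchy–Schwarz (or Hölder), splitting $|u|^r e^{r\lambda|u|^p}$ as a product and using that $u(t)\in \exp L^p_0 \hookrightarrow L^s$ for all $p \le s < \infty$ (by Lemma \ref{relaorL}(3), since $\exp L^p_0 \subset \exp L^p$) together with the fact that $e^{2r\lambda|u|^p}-1 \in L^1$ (Proposition \ref{pnlex}); the second term $|u|^r$ is fine since $u(t) \in L^r$ for $r \ge p$. Hence $f(u(t)) \in L^r$ for each $t$.

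Next, for continuity in $t$, fix $t_0 \in [0,T]$ and estimate $\|f(u(t))-f(u(t_0))\|_{L^r}$ using \eqref{nflw} directly:
\begin{equation*}
\|f(u(t))-f(u(t_0))\|_{L^r} \le C \big\| |u(t)-u(t_0)|\,\big(e^{\lambda|u(t)|^p}+e^{\lambda|u(t_0)|^p}\big)\big\|_{L^r}.
\end{equation*}
I would apply Hölder with exponents $2$ and $2$ (or more generally conjugate exponents) to separate the difference factor from the exponential factor: bound this by $C\,\|u(t)-u(t_0)\|_{L^{2r}}\,\big\|e^{\lambda|u(t)|^p}+e^{\lambda|u(t_0)|^p}\big\|_{L^{2r}}$. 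The first factor tends to $0$ as $t\to t_0$ because $u \in C([0,T];\exp L^p_0)$ and $\exp L^p_0 \hookrightarrow L^{2r}$ continuously (Lemma \ref{relaorL}(3)), hence $u \in C([0,T];L^{2r})$. The second factor is bounded uniformly in $t$ near $t_0$: since $2r \ge p$, we have $\big(e^{\lambda|u(t)|^p}\big)^{2r} = e^{2r\lambda|u(t)|^p}$ and $e^{2r\lambda|u(t)|^p}-1 \in C([0,T];L^1)$ by Proposition \ref{pnlex}, so $\|e^{\lambda|u(t)|^p}\|_{L^{2r}}^{2r} = \|e^{2r\lambda|u(t)|^p}-1\|_{L^1} + \|1\|_{?}$ — here one must be a little careful since $e^{2r\lambda|u|^p}$ itself is not in $L^1$; I would instead write $\big\| e^{\lambda|u(t)|^p} - 1\big\|_{L^{2r}} \le \|e^{2r\lambda|u(t)|^p}-1\|_{L^1}^{1/(2r)}$ via the elementary inequality $(e^a-1)^{2r} \le e^{2ra}-1$ for $a \ge 0$, and handle the additive constant $1$ separately by working on a fixed bounded region or, cleaner, by using the nonlinearity structure: actually since $f(u(t))-f(u(t_0))$ is what we estimate, we may replace $e^{\lambda|u|^p}$ by $e^{\lambda|u|^p}-1 + 1$ and note $|u(t)-u(t_0)| \cdot 1$ is already controlled in $L^r$ by $\|u(t)-u(t_0)\|_{L^r} \to 0$. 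So split the exponential factor as $(e^{\lambda|u|^p}-1) + 1$ and treat the two pieces separately, the "$-1$" piece by Hölder as above and the "$+1$" piece trivially.

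Thus the only real point requiring care is the integrability/uniform-bound bookkeeping for the exponential factor — making sure every exponent used stays in the range $[p,\infty)$ where both $\exp L^p_0 \hookrightarrow L^s$ and Proposition \ref{pnlex} apply, and correctly handling the additive constants so that no genuinely non-integrable quantity (like $e^{2r\lambda|u|^p}$ on all of $\R^d$) is ever claimed to be in $L^1$. I expect this is the main obstacle, though it is entirely routine: the decomposition $e^{\lambda|u|^p} = (e^{\lambda|u|^p}-1) + 1$ combined with Hölder's inequality and Lemma \ref{relaorL}(3) resolves it. I would also remark that the case $r < p$ reduces to $r = p$: if $f(u) \in C([0,T];L^p)$ and also $f(u) \in C([0,T];L^{r_1})$ for some $r_1 > p$, interpolation gives all intermediate values, but since the statement only claims $p \le r < \infty$ this is not needed. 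Finally, I note that the argument is uniform enough that one can also conclude $\|f(u(t))\|_{L^r}$ is bounded on $[0,T]$, which is what is actually used in the fixed-point argument for Theorem \ref{T1.1}.
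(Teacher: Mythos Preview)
Your argument is correct and is the standard one: H\"older to separate the difference $|u(t)-u(t_0)|$ from the exponential weight, then the decomposition $e^{\lambda|u|^p}=(e^{\lambda|u|^p}-1)+1$ together with Lemma~\ref{relaorL}(3) and Proposition~\ref{pnlex} to control each piece. The integrability bookkeeping you flag is handled exactly as you describe, and the elementary inequality $(e^a-1)^{q}\le e^{qa}-1$ for $a\ge 0,\ q\ge 1$ is valid.

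Note, however, that the paper does not actually supply a proof of this corollary: it is stated with a citation to \cite{majdoub2018picm} and no argument is given in the text. So there is nothing in the paper itself to compare your approach against; what you have written is precisely the proof one finds (or reconstructs) from that reference, and it is complete as sketched.
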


To prove the global existence results, the following estimate of the nonlinear term will be handy later.
\begin{Lemma}(\cite[Lemma 2.6, p. 2387]{majdoub2018picm}). \label{L2.31}
 Let $\lambda>0,~1\leq p,\, q<\infty$ and $K>0$ such that $\lambda q K^p\leq 1$. Assume that 
 \[\|u\|_{\exp L^p} \leq K.\]
 Then \[\|e^{\lambda |u|^p}-1\|_{L^q}\leq (\lambda q K^p)^{\frac 1q}.\]
\end{Lemma}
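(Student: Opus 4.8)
The plan is to prove the estimate $\|e^{\lambda|u|^p}-1\|_{L^q}\le (\lambda q K^p)^{1/q}$ by working directly from the definition of the Luxemburg norm together with a Taylor-series comparison. First I would note that the hypothesis $\|u\|_{\exp L^p}\le K$, combined with the definition of the norm in $\exp L^p$, gives $\int_{\R^d}\bigl(e^{|u(x)|^p/K^p}-1\bigr)\,dx\le 1$ (using that the infimum defining the Luxemburg norm is attained in the sense that the integral with $\lambda=\|u\|_{\exp L^p}$ is $\le 1$, and monotonicity lets us replace $\|u\|_{\exp L^p}$ by the larger $K$). This is the only place the assumption on $u$ enters.

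Next I would expand both exponentials in power series. Writing $e^{\lambda|u|^p}-1=\sum_{k\ge 1}\frac{\lambda^k|u|^{pk}}{k!}$, we have
\[
\|e^{\lambda|u|^p}-1\|_{L^q}^q=\int_{\R^d}\Bigl(\sum_{k\ge1}\frac{\lambda^k|u(x)|^{pk}}{k!}\Bigr)^q dx.
\]
The key algebraic step is to dominate the $q$-th power of this series. Since $q\ge 1$ and the series has nonnegative terms, I would use the elementary inequality $\bigl(\sum_k a_k\bigr)^q \le$ something controllable; the cleanest route is to first reduce to the case where the bracket is $\le 1$ pointwise a.e., which actually follows from $\int(e^{|u|^p/K^p}-1)\le 1$ only after rescaling — so instead I would factor $\lambda = (\lambda q)\cdot\frac1q$ and write $\lambda^k = (\lambda q K^p)^k \cdot (qK^p)^{-k}$, so that
\[
e^{\lambda|u|^p}-1=\sum_{k\ge1}\frac{(\lambda q K^p)^k}{k!}\Bigl(\frac{|u|^p}{qK^p}\Bigr)^k.
\]
Because $\lambda q K^p\le 1$, each factor $(\lambda qK^p)^k\le \lambda qK^p$ for $k\ge 1$, giving the pointwise bound $e^{\lambda|u|^p}-1\le \lambda q K^p\bigl(e^{|u|^p/(qK^p)}-1\bigr)$. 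Raising to the $q$-th power and using $q\ge 1$ together with $(e^{s}-1)^q\le e^{qs}-1$ for $s\ge0$ (the inequality $(e^s-1)^q\le e^{qs}-1$, valid since $t\mapsto t^q$ is convex with value $0$ at $0$, or directly by comparing Taylor coefficients), I get
\[
\bigl(e^{\lambda|u|^p}-1\bigr)^q\le (\lambda qK^p)^q\bigl(e^{|u|^p/(qK^p)}-1\bigr)^q\le (\lambda qK^p)^q\bigl(e^{|u|^p/K^p}-1\bigr),
\]
where in the last step I also used $qK^p\ge K^p$ so $|u|^p/(qK^p)\le |u|^p/K^p$ and monotonicity of $e^s-1$, absorbing the $q$-th power via $(e^{s/q}-1)^q \le e^{s}-1$.

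Finally I would integrate: $\int_{\R^d}(e^{\lambda|u|^p}-1)^q\,dx\le (\lambda qK^p)^q\int_{\R^d}(e^{|u|^p/K^p}-1)\,dx\le (\lambda qK^p)^q$, and take $q$-th roots to conclude $\|e^{\lambda|u|^p}-1\|_{L^q}\le (\lambda qK^p)^{1/q}$. The main obstacle — really the only subtle point — is justifying the chain of elementary inequalities relating $(e^{as}-1)^q$ to $e^{bs}-1$ cleanly (choosing the right normalization so that the $K^{-p}$ in the exponent matches the Luxemburg condition exactly, and handling the exponent $q\ge1$ without losing constants); interchanging sum and integral is harmless by monotone convergence since all terms are nonnegative. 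I expect no difficulty with convergence or measurability.
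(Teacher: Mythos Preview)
The paper itself does not prove this lemma --- it is quoted directly from \cite{majdoub2018picm} --- so there is no in-paper argument to compare against. Your argument is correct: the pointwise bound $e^{\lambda|u|^p}-1\le \lambda q K^p\bigl(e^{|u|^p/(qK^p)}-1\bigr)$ (valid because $(\lambda qK^p)^k\le \lambda qK^p$ for $k\ge 1$ when $\lambda qK^p\le 1$), followed by the elementary inequality $(e^{s/q}-1)^q\le e^{s}-1$ for $s\ge 0$, $q\ge 1$, and then the Luxemburg-norm condition $\int_{\R^d}(e^{|u|^p/K^p}-1)\,dx\le 1$, yields exactly $\int_{\R^d}(e^{\lambda|u|^p}-1)^q\,dx\le (\lambda qK^p)^q$. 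The exposition could be streamlined (the false starts and the redundant appeal to monotonicity in the last display are unnecessary once you have $(e^{s/q}-1)^q\le e^s-1$), but the mathematics is sound.
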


\begin{Lemma}(\cite[Lemma 2.6]{majdoub2018picm} and \cite{fino2020cpaa}).
\label{L2.3} 
    Let $m\geq p>1,~a>\frac{p(m-1)}{p-1}.$ Define $\sigma=\frac 1{m-1}-\frac d{2\beta a}.$ Assume that $d>\frac{2\beta p}{p-1},~a<\frac{d(m-1)}{2\beta} \frac{1}{(2-m)_+}$. Then there exist $r,q,\{\theta_k\}_{k=0}^{\infty}, \{\rho_k\}_{k=0}^{\infty}$ such that $1 < r\leq a,~q\geq 1$ and $\frac 1r=\frac 1a+\frac 1q,~$ $0<\theta_k<1$ and $\frac{1}{q(pk+m-1)}=\frac{\theta_k}{a}+\frac{1-\theta_k}{\rho_k},~ p\leq \rho_k<\infty,~\frac{d}{2\beta}(\frac 1r-\frac 1a)<1$, 
    
    $$\sigma[1
    +\theta_k(pk+m-1)]<1.$$
    
    \[1-\frac{d}{2\beta}\left(\frac 1r-\frac 1a\right)-\sigma \theta_k(pk+m-1)=0.\]
\end{Lemma}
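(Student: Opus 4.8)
The strategy is to treat the claimed system of conditions as a genuine \emph{design problem}: pick the free parameters in a carefully chosen order so that each algebraic constraint becomes a definition rather than an obstruction, and then verify that the remaining strict inequalities survive on a nonempty parameter window. First I would introduce the single normalization $\frac1r=\frac1a+\frac1q$, so that $q$ is determined by $r$ (equivalently by $a$ and the ratio $r/a$). The decisive constraint is the equality
\[
1-\frac{d}{2\beta}\Big(\frac1r-\frac1a\Big)-\sigma\,\theta_k(pk+m-1)=0,
\]
which, once $r$ is fixed, \emph{forces} $\theta_k(pk+m-1)=\big(1-\frac{d}{2\beta}(\frac1r-\frac1a)\big)/\sigma$ — a quantity independent of $k$. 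Call it $A(r)$. Thus $\theta_k=A(r)/(pk+m-1)$ is not a free choice but a formula; the requirement $0<\theta_k<1$ then reduces to two clean conditions: $A(r)>0$ (i.e. $\frac{d}{2\beta}(\frac1r-\frac1a)<1$, which is already in the hypotheses once we also demand $\frac1r-\frac1a$ small, i.e. $q$ large, i.e. $r$ close to $a$) and $A(r)<p+m-1$ (the $k=0$ case, which is the binding one since $pk+m-1$ is increasing in $k$).

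Next I would check the inequality $\sigma[1+\theta_k(pk+m-1)]<1$. Substituting the formula for $\theta_k$ gives $\sigma[1+A(r)]<1$, i.e. $\sigma + \big(1-\frac{d}{2\beta}(\frac1r-\frac1a)\big)<1$, i.e. $\frac{d}{2\beta}(\frac1r-\frac1a)>\sigma$. Combined with the earlier demand $\frac{d}{2\beta}(\frac1r-\frac1a)<1$, this asks us to place $\frac{d}{2\beta}(\frac1r-\frac1a)$ strictly between $\sigma$ and $1$; since $0<\sigma<1$ (which follows from $a>\frac{d(m-1)}{2\beta}\cdot\frac{1}{\,\cdot\,}$ type bounds, more precisely $\sigma=\frac{1}{m-1}-\frac{d}{2\beta a}>0$ iff $a>\frac{d(m-1)}{2\beta}$, and $\sigma<1$ because $m\ge p>1$ and $a$ is bounded above), such an open interval of admissible values of $\frac1r-\frac1a$ is nonempty, hence an admissible $r\in(1,a]$ exists; one must separately check $r>1$, which is where the hypothesis $a>\frac{p(m-1)}{p-1}$ enters, bounding how small $\frac1r$ can be forced to be. Finally, with $r,q,\theta_k$ in hand, $\rho_k$ is \emph{defined} by $\frac{1}{q(pk+m-1)}=\frac{\theta_k}{a}+\frac{1-\theta_k}{\rho_k}$, so one only needs $p\le\rho_k<\infty$: finiteness is automatic (solve for $1/\rho_k$, which is positive once $\theta_k/a < \frac{1}{q(pk+m-1)}$, guaranteed for suitable $r$), and the lower bound $\rho_k\ge p$ is again worst at $k=0$ and follows from $m\ge p$ together with the upper bound $a<\frac{d(m-1)}{2\beta}\frac{1}{(2-m)_+}$, which is exactly the role that hypothesis plays.

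The one genuinely delicate point — and the place I expect to spend real effort — is \textbf{the simultaneous satisfiability of all the $\rho_k\ge p$ constraints together with $r>1$ and $\theta_0<1$}: these pull $r$ in competing directions, so I would isolate a single scalar parameter, say $\delta:=\frac1r-\frac1a\in\big(0,\tfrac1{a'}\big)$, rewrite every constraint as an inequality for $\delta$, and show the resulting finite collection of intervals has nonempty intersection precisely under the stated hypotheses $d>\frac{2\beta p}{p-1}$, $a>\frac{p(m-1)}{p-1}$, and $a<\frac{d(m-1)}{2\beta}\frac{1}{(2-m)_+}$. The case split on $(2-m)_+$ — i.e. $m\ge2$ versus $1<m<2$ — will be handled at this step, since when $m\ge2$ the upper bound on $a$ becomes vacuous and the $\rho_k\ge p$ conditions are easier, while for $1<m<2$ it is the genuinely active constraint. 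Once the $\delta$-interval is shown nonempty, choose any $\delta$ in it, read off $r$, then $q$, then $\theta_k$, then $\rho_k$, and verify mechanically that all listed (in)equalities hold; the monotonicity in $k$ of $pk+m-1$ reduces every $k$-indexed check to its $k=0$ instance, so no infinite verification is actually needed.
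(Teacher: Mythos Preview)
The paper does not supply its own proof of this lemma; it is quoted verbatim from \cite[Lemma~2.6]{majdoub2018picm} and \cite{fino2020cpaa}, so there is no ``paper's proof'' to compare against. Your constructive strategy---fix $r$ (equivalently $\delta=\tfrac1r-\tfrac1a$), let the equality constraint \emph{define} $\theta_k(pk+m-1)=A(r)$ as a $k$-independent quantity, then \emph{define} $\rho_k$ by the interpolation identity, and finally reduce everything to a finite system of inequalities in the scalar $\delta$---is exactly the right approach and is how the cited references proceed.

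Two small corrections. First, at $k=0$ the quantity $pk+m-1$ equals $m-1$, not $p+m-1$; so the binding constraint for $\theta_0<1$ is $A(r)<m-1$, which is sharper than what you wrote. Second, you assert $0<\sigma<1$ but should be explicit about where $\sigma>0$ comes from: this is equivalent to $a>\tfrac{d(m-1)}{2\beta}$, which is \emph{stronger} than the stated hypothesis $a>\tfrac{p(m-1)}{p-1}$ under the assumption $d>\tfrac{2\beta p}{p-1}$. In the paper's application (proof of Theorem~\ref{Th1}, case~(1)) the stronger bound $a>\tfrac{d(m-1)}{2\beta}$ is in force, so the lemma is only invoked in that regime; you should either add it as an explicit hypothesis or note that the lemma as literally stated is missing it. Your observation that the upper bound $a<\tfrac{d(m-1)}{2\beta}\tfrac{1}{(2-m)_+}$ is precisely what forces $\sigma<1$ is correct and is the key to closing the $\delta$-interval argument. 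The monotonicity reduction to $k=0$ is valid for both the $\theta_k<1$ and the $\rho_k\ge p$ checks, as you claim.
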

{To conclude this section, we state the Banach fixed point theorem, a key tool that will play a central role in our analysis.
\begin{theorem}[{\sf Banach Fixed Point Theorem}]
\label{BFP}
Let \((\mathbf{X}, d)\) be a non-empty complete metric space, and let \(\Phi: \mathbf{X} \to \mathbf{X}\) be a contraction mapping, i.e., there exists a constant \(0 \leq \kappa < 1\) such that  
\[
d(\Phi(x), \Phi(y)) \leq \kappa \, d(x, y) \quad \text{for all } x, y \in \mathbf{X}.
\]  
Then, the following hold:
\begin{enumerate}
    \item[1)] \(\Phi\) has a unique fixed point \(x^* \in \mathbf{X}\) satisfying \(\Phi(x^*) = x^*\).
    \item[2)] For any initial point \(x_0 \in \mathbf{X}\), the sequence defined by \(x_{n+1} = \Phi(x_n)\) converges to \(x^*\) as \(n \to \infty\).
\end{enumerate}
\end{theorem}
}

\section{Proof of Theorem \ref{T1.1}}
\label{S3}
Our approach closely follows the proof method outlined in \cite[Theorem 1.3]{fino2020cpaa}, building on earlier works such as \cite{majdoub2018picm, majdoub2018ade, ioku2015MPAG}. For brevity, we provide only a brief sketch of the proof.

The main idea is to decompose the initial data  $u_0 \in \exp L^p_0(\R^d),$ using the density of $C_0^{\infty}(\R^d),$ into a small part in $\exp L^p(\R^d)$ and a smooth one.
 Let $u_0\in \exp L^p_0(\R^d).$ Then by density,  for every $\epsilon>0$ there exists $v_0\in C_0^{\infty}(\R^d)$ such that $u_0=v_0+w_0$ with
\[\|w_0\|_{\exp L^p(\R^d)} \leq \epsilon.\]
In order to study the problem \eqref{hexp}, we consider the following two problems: 
\begin{eqnarray}\label{hexp1}
\begin{cases} \partial_tv + (-\Delta +|x|^2)^{\beta}v=f(v),\\
v(x,0)=v_0\in C_0^{\infty}(\R^d),
\end{cases} 
\end{eqnarray}
and
\begin{eqnarray}\label{hexp2}
\begin{cases} \partial_tw + (-\Delta +|x|^2)^{\beta}w=f(w+v)-f(v),\\
w(x,0)=w_0,~\|w_0\|_{\exp L^p} \leq \epsilon.
\end{cases} 
\end{eqnarray}
We observe that when $v$ and $w$ are mild solutions of \eqref{hexp1} and \eqref{hexp2} respectively, the function $u=v+w$ satisfies \eqref{hexp} as a mild solution. We will now establish the local well-posedness for \eqref{hexp1} and \eqref{hexp2}.

\begin{Lemma} \label{L4.1}
Let $v_0 \in  L^p(\R^d) \cap L^{\infty}(\R^d),  p>1,  \beta>0$. Assume that $f$ satisfies \eqref{nflw}. Then there exists $T= T(v_0)>0$   and a mild solution  $v \in C([0, T]; \exp L^p_0(\R^d)) \cap L^{\infty}(0,T;L^{\infty}(\R^d))$ of \eqref{hexp1}.
\end{Lemma}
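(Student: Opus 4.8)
\textbf{Plan of the proof of Lemma \ref{L4.1}.}
The plan is to run a standard Banach fixed point argument for the integral formulation of \eqref{hexp1} in a suitable complete metric space built from $C([0,T];\exp L^p_0(\R^d)) \cap L^\infty(0,T;L^\infty(\R^d))$. Since $v_0 \in L^p \cap L^\infty$, the inclusion \eqref{pl1} of Lemma \ref{relaorL} gives $v_0 \in \exp L^p_0(\R^d)$, so the free evolution $e^{-tH^\beta}v_0$ lies in $C([0,\infty);\exp L^p_0(\R^d))$ by Proposition \ref{conexp0}, and by the $L^\infty\!-\!L^\infty$ bound in Theorem \ref{dc}\eqref{P2} (valid since $0<\beta\leq 1$, which is exactly where the hypothesis on $\beta$ is used) it also stays bounded in $L^\infty$. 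For $M>0$ and $T>0$ to be chosen, define
\[
\mathcal{X}_{T,M} = \Big\{ v \in C([0,T];\exp L^p_0(\R^d)) \cap L^\infty(0,T;L^\infty(\R^d)) : \|v\|_{L^\infty(0,T;L^\infty)} \leq M,\ \|v\|_{L^\infty(0,T;\exp L^p)} \leq M \Big\},
\]
equipped with the metric $d(v_1,v_2) = \|v_1-v_2\|_{L^\infty(0,T;L^\infty)} + \|v_1-v_2\|_{L^\infty(0,T;\exp L^p)}$, which makes it a nonempty complete metric space, and set $\Phi(v)(t) = e^{-tH^\beta}v_0 + \int_0^t e^{-(t-s)H^\beta} f(v(s))\,ds$.

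The core estimates are: (i) stability of the class, i.e. $\Phi$ maps $\mathcal{X}_{T,M}$ into itself for $M$ large (depending on $\|v_0\|_{L^p}+\|v_0\|_{L^\infty}$) and $T$ small; and (ii) the contraction estimate. For both, the key input is the pointwise nonlinearity bound \eqref{nflw}: with $v(0)=0$ and $f(0)=0$ it gives $|f(v)| \leq C|v|\, e^{\lambda|v|^p}$, and more generally $|f(v_1)-f(v_2)| \leq C|v_1-v_2|\big(e^{\lambda|v_1|^p}+e^{\lambda|v_2|^p}\big)$. In $L^\infty$, for $v\in\mathcal{X}_{T,M}$ one has $\|f(v(s))\|_{L^\infty} \leq C M e^{\lambda M^p}$, so by Theorem \ref{dc}\eqref{P2} (the $L^\infty\!-\!L^\infty$ bound with the $t^{-\sigma_\beta}=t^0$ factor, i.e. a uniformly bounded semigroup on $L^\infty$ for $0<t\leq 1$) the Duhamel term is bounded by $CT M e^{\lambda M^p}$, which is $\leq M/2$ for $T$ small. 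For the $\exp L^p$ component, one estimates $\|f(v(s))\|_{L^\infty}$ and $\|f(v(s))\|_{L^p}$ — the latter using $|f(v)|\leq C|v| e^{\lambda|v|^p} \leq C|v|(1 + (e^{\lambda q'|v|^p}-1))^{1/q'}$ type bounds together with $v\in L^p$, or more simply Hölder plus Lemma \ref{L2.31} — and then applies Proposition \ref{lpexp}\eqref{lpexp3} (valid for $0<\beta\leq 1$) to pass $e^{-(t-s)H^\beta} f(v(s))$ into $\exp L^p$, picking up an integrable-in-$s$ singularity $(t-s)^{-d/(2\beta r)}$; choosing $r$ large enough ensures integrability and a factor that is small for small $T$. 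Continuity in $t$ with values in $\exp L^p_0$ follows from Lemma \ref{L3.3} applied with $X=\exp L^p_0(\R^d)$, once one knows $f(v(\cdot)) \in L^1(0,T;\exp L^p_0)$, which in turn uses the Corollary following Proposition \ref{pnlex} (that $f(u)\in C([0,T];L^r)$ for $v\in C([0,T];\exp L^p_0)$) combined with the embedding \eqref{pl1}. The contraction estimate is entirely parallel: \eqref{nflw} gives $\|f(v_1(s))-f(v_2(s))\|_{L^\infty} \leq C e^{\lambda M^p} \|v_1(s)-v_2(s)\|_{L^\infty}$ and a similar $L^p$ bound, and the same semigroup estimates produce $d(\Phi(v_1),\Phi(v_2)) \leq \kappa\, d(v_1,v_2)$ with $\kappa<1$ for $T$ sufficiently small. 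Then Theorem \ref{BFP} yields the desired fixed point $v$.

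The main obstacle I anticipate is the bookkeeping in the $\exp L^p$ estimate of the Duhamel term: one must control $\|f(v(s))\|_{L^r}$ for an appropriate finite $r$ (and possibly $\|f(v(s))\|_{L^\infty}$) in terms of $M$ uniformly in $s\in[0,T]$, which requires carefully combining \eqref{nflw}, the $L^\infty$ bound $\|v(s)\|_{L^\infty}\leq M$, and an $L^p$-type bound on $v(s)$ coming from the $\exp L^p \hookrightarrow L^p$ embedding \eqref{pl3} — and then feeding this into Proposition \ref{lpexp}\eqref{lpexp3} with the exponent $r$ chosen so that $d/(2\beta r)<1$ so the time integral $\int_0^t (t-s)^{-d/(2\beta r)}\,ds$ converges and is $O(T^{1-d/(2\beta r)})$. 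A secondary technical point is verifying that the fixed point actually belongs to $C([0,T];\exp L^p_0)$ and not merely $C([0,T];\exp L^p)$; this is where Proposition \ref{conexp0} (for the linear part) and Lemma \ref{L3.3} with $X = \exp L^p_0$ (for the Duhamel part, using $\exp L^p_0$-valued integrability of $f(v)$) are essential, and it is the reason the density of $C_0^\infty$ in $\exp L^p_0$ is highlighted in the remark after Theorem \ref{T1.1}.
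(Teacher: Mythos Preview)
Your proposal is correct and follows essentially the same route as the paper: a Banach fixed point argument for the integral map $\Phi$ on a ball in $C([0,T];\exp L^p_0)\cap L^\infty(0,T;L^\infty)$, using Theorem~\ref{dc}\eqref{P2}, Proposition~\ref{conexp0}, Lemma~\ref{L3.3}, and the embedding \eqref{exp-l-p-est}. The only notable difference is that the paper equips the ball with the norm $\|v\|_{L^\infty(0,T;L^p)}+\|v\|_{L^\infty(0,T;L^\infty)}$ rather than your $\|v\|_{L^\infty(0,T;\exp L^p)}+\|v\|_{L^\infty(0,T;L^\infty)}$; working directly in $L^p\cap L^\infty$ makes the nonlinear estimate $\|f(v)\|_{L^p}\leq C\,e^{\lambda\|v\|_{L^\infty}^p}\|v\|_{L^p}$ immediate and lets the $\exp L^p_0$ membership fall out at the end from the embedding $L^p\cap L^\infty\hookrightarrow \exp L^p_0$, which sidesteps the ``bookkeeping'' obstacle you flag around Proposition~\ref{lpexp}\eqref{lpexp3} and Lemma~\ref{L2.31}.
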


\begin{Lemma} \label{L4.2}
Let $w_0 \in  \exp L^p_0(\R^d),  p>1,  \beta>0$. Assume that $f$ satisfies \eqref{nflw}. Let $T>0$ and $v\in L^{\infty}(0,T;L^{\infty}(\R^d))$ be given in Lemma \ref{L4.1}. Then for $\|w_0\|_{\exp L^p} \leq \epsilon$ with $\epsilon \ll 1$ small enough, there exists $\tilde{T}= \tilde{T}(w_0,\epsilon,v)>0$ and a mild solution  $w \in C([0, \tilde{T}]; \exp L^p_0(\R^d))$ of \eqref{hexp2}.
\end{Lemma}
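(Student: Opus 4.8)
The plan is to set up a Banach fixed point argument (Theorem~\ref{BFP}) for the integral formulation of \eqref{hexp2} in a suitable complete metric space. Given the function $v\in L^\infty(0,T;L^\infty(\R^d))$ from Lemma~\ref{L4.1}, write $M:=\|v\|_{L^\infty(0,T;L^\infty)}$ and define, for $\tilde T\in(0,T]$ and $R>0$ to be fixed,
\[
\mathbf{X}_{\tilde T,R}=\Bigl\{w\in C([0,\tilde T];\exp L^p_0(\R^d)):\ \sup_{0\le t\le \tilde T}\|w(t)\|_{\exp L^p}\le R\Bigr\},
\]
equipped with the metric $d(w_1,w_2)=\sup_{[0,\tilde T]}\|w_1(t)-w_2(t)\|_{\exp L^p}$, which makes it a nonempty complete metric space. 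On $\mathbf{X}_{\tilde T,R}$ consider the map
\[
\Phi(w)(t)=e^{-tH^\beta}w_0+\int_0^t e^{-(t-s)H^\beta}\bigl(f(w(s)+v(s))-f(v(s))\bigr)\,ds .
\]
The first step is to check $\Phi$ maps $\mathbf{X}_{\tilde T,R}$ into itself, and the second is to verify the contraction estimate; the fixed point is then the desired mild solution, and its continuity in time on $[0,\tilde T]$ follows from Proposition~\ref{conexp0} for the linear term together with Lemma~\ref{L3.3} for the Duhamel term (once we know the forcing term lies in $L^1(0,\tilde T;\exp L^p_0)$, or in $L^1(0,\tilde T;L^r)$ for suitable $r$ and then embed).

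For the self-mapping bound, the linear part satisfies $\|e^{-tH^\beta}w_0\|_{\exp L^p}\le C\|w_0\|_{\exp L^p}\le C\epsilon$ by Proposition~\ref{lpexp}(1). For the nonlinear part, apply \eqref{nflw} with the pair $(w(s)+v(s),v(s))$: since $|(w+v)-v|=|w|$ and $v$ is bounded,
\[
|f(w(s)+v(s))-f(v(s))|\le C|w(s)|\bigl(e^{\lambda|w(s)+v(s)|^p}+e^{\lambda|v(s)|^p}\bigr)\le C(M)\,|w(s)|\,\bigl(e^{\lambda 2^{p-1}|w(s)|^p}+1\bigr),
\]
using $|w+v|^p\le 2^{p-1}(|w|^p+|v|^p)$. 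Now estimate $\|f(w(s)+v(s))-f(v(s))\|_{\exp L^p}$ (or an $L^r$ norm with $r\ge p$ large) by Hölder in the Orlicz scale: split $|w|\cdot e^{\lambda'|w|^p}$ into $\|w\|_{\exp L^p}$ times $\|e^{\lambda'|w|^p}-1\|_{L^q}+\|1\|$-type factors, invoking Lemma~\ref{L2.31} with $K=R$ chosen so that $\lambda' q R^p\le 1$ — this is exactly where the smallness of $R$ (hence of $\epsilon$) enters. One then gets $\sup_{[0,\tilde T]}\|\Phi(w)(t)\|_{\exp L^p}\le C\epsilon + C(M,R)\,\tilde T\cdot R$ (the time integral contributes a factor $\tilde T$ since the integrand is bounded in $\exp L^p$ uniformly in $s$), and by first fixing $R$ small, then $\epsilon$ with $C\epsilon\le R/2$, and finally $\tilde T$ small enough that $C(M,R)\tilde T\le 1/2$, we close the ball. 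The contraction estimate is analogous: for $w_1,w_2\in\mathbf{X}_{\tilde T,R}$, write $f(w_1+v)-f(w_2+v)$ and use \eqref{nflw} to bound it by $C|w_1-w_2|(e^{\lambda|w_1+v|^p}+e^{\lambda|w_2+v|^p})$, then Hölder and Lemma~\ref{L2.31} again to obtain $d(\Phi w_1,\Phi w_2)\le C(M,R)\tilde T\, d(w_1,w_2)$, with $C(M,R)\tilde T<1$ after possibly shrinking $\tilde T$.

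The main obstacle is the nonlinear estimate in the exponential Orlicz norm: one must carefully track that the exponent $\lambda|w+v|^p$ produces, after the $2^{p-1}$-splitting, a term $e^{\lambda 2^{p-1}|w|^p}$ whose $L^q$ norm is controlled by Lemma~\ref{L2.31} only under the constraint $\lambda 2^{p-1} q R^p\le 1$, so the choice of $R$, $q$ and $\tilde T$ must be made in the right order and the $v$-dependent constant $C(M)=C e^{\lambda M^p}$ absorbed correctly; additionally one should confirm the forcing term $f(w+v)-f(v)$ belongs to $C([0,\tilde T];\exp L^p_0(\R^d))$ — not merely $\exp L^p$ — which uses Proposition~\ref{pnlex} together with the fact that $w+v$ and $v$ lie in $C([0,\tilde T];\exp L^p_0)$, so that Lemma~\ref{L3.3} applies and gives the desired $C([0,\tilde T];\exp L^p_0)$ regularity of the solution. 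Everything else is routine bookkeeping with the embeddings of Lemma~\ref{relaorL} and the semigroup bounds of Proposition~\ref{lpexp}.
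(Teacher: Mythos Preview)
Your overall strategy---Banach fixed point in a ball of $C([0,\tilde T];\exp L^p_0)$, with the nonlinear estimate essentially reproducing Lemma~\ref{L4.3}---matches the paper's approach. However, there is a gap in your Duhamel estimate.

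You assert that ``the time integral contributes a factor $\tilde T$ since the integrand is bounded in $\exp L^p$ uniformly in $s$.'' This would follow from Proposition~\ref{lpexp}(1) provided $f(w+v)-f(v)\in\exp L^p$ with a uniform bound, but that fails in general: your pointwise bound gives $|f(w+v)-f(v)|\le C(M)\,|w|\,e^{\lambda 2^{p-1}|w|^p}$, and membership of this function in $\exp L^p$ would require $\int \exp\bigl(\mu\,|w|^p e^{p\lambda 2^{p-1}|w|^p}\bigr)\,dx<\infty$ for some $\mu>0$, i.e.\ double-exponential integrability of $|w|^p$, which $\exp L^p_0$ does not provide. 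Your parenthetical alternative (``an $L^r$ norm with $r\ge p$ large'') is the correct direction---indeed H\"older and Lemma~\ref{L2.31} yield $\|f(w+v)-f(v)\|_{L^r}\le C_r\,e^{2^{p-1}\lambda M^p}\|w\|_{\exp L^p}$ for finite $r$---but you do not explain how to pass from $L^r$ back to $\exp L^p$, and this step cannot produce a pure $\tilde T$ factor.

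The paper resolves this by applying \eqref{exp-l-p-est} to the entire Duhamel term, reducing to separate $L^p$ and $L^\infty$ estimates. The $L^p$ piece uses $L^p$--$L^p$ boundedness of the semigroup together with Lemma~\ref{L4.3} at $q=p$, producing the factor $\tilde T$. The $L^\infty$ piece uses the smoothing estimate $\|e^{-(t-s)H^\beta}g\|_{L^\infty}\le C(t-s)^{-d/(2\beta r)}\|g\|_{L^r}$ from Theorem~\ref{dc} (hence the choice $r>\max\{p,d/(2\beta)\}$ and the implicit restriction $0<\beta\le 1$), then Lemma~\ref{L4.3} at $q=r$, producing $\tilde T^{1-d/(2\beta r)}$. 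The contraction condition becomes $Ce^{2^{p-1}\lambda\|v\|_\infty^p}\bigl(\tilde T+\tilde T^{1-d/(2\beta r)}\bigr)\le\tfrac12$, not $C(M,R)\tilde T\le\tfrac12$. With this smoothing step inserted, your argument is complete and coincides with the paper's.
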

To prove the lemmas mentioned above, we require the following result.
\begin{Lemma}\cite[Lemma 4.4]{majdoub2018picm} \label{L4.3} 
Let $v\in L^{\infty}(0,T;L^{\infty}(\R^d))$ for some $T>0$. Let $1<p\leq q<\infty,$ and $w_1,w_2\in \exp L^p(\R^d)$ with $\|w_1\|_{\exp L^p},~\|w_2\|_{\exp L^p}\leq M$ for sufficiently small $M>0$ (namely $2^p \lambda qM^p\leq 1$, where $\lambda$ is given as in \eqref{nflw}). Then there {exists} a constant $C_q>0$ such that
\[\|f(w_1+v)-f(w_2+v)\|_{L^q}\leq C_q e^{2^{p-1}\lambda \|v\|_{\infty}^p} \, \|w_1-w_2\|_{\exp L^p}.\]
\end{Lemma}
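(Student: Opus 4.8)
The plan is to first derive a pointwise bound for the difference $f(w_1+v)-f(w_2+v)$ using the structural assumption \eqref{nflw}, and then take $L^q$ norms with the help of H\"older's inequality, the embedding $\exp L^p\hookrightarrow L^r$ from Lemma~\ref{relaorL}, and the smallness estimate of Lemma~\ref{L2.31}. Applying \eqref{nflw} with $a=w_1+v$ and $b=w_2+v$ gives
\[
|f(w_1+v)-f(w_2+v)|\leq C\,|w_1-w_2|\left(e^{\lambda|w_1+v|^p}+e^{\lambda|w_2+v|^p}\right).
\]
Since $p>1$, convexity of $s\mapsto s^p$ yields $|w_i+v|^p\leq 2^{p-1}\big(|w_i|^p+\|v\|_{\infty}^p\big)$, hence $e^{\lambda|w_i+v|^p}\leq e^{2^{p-1}\lambda\|v\|_{\infty}^p}\,e^{2^{p-1}\lambda|w_i|^p}$. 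Writing $e^{2^{p-1}\lambda|w_i|^p}=1+\big(e^{2^{p-1}\lambda|w_i|^p}-1\big)$, I would obtain
\[
|f(w_1+v)-f(w_2+v)|\leq C\,e^{2^{p-1}\lambda\|v\|_{\infty}^p}\,|w_1-w_2|\left(2+\sum_{i=1}^{2}\big(e^{2^{p-1}\lambda|w_i|^p}-1\big)\right).
\]

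Next I would take the $L^q$ norm and split into three contributions. For the term coming from the constant $2$, the embedding $\exp L^p\hookrightarrow L^q$ (Lemma~\ref{relaorL}(3), applicable since $1<p\leq q<\infty$) gives $\|w_1-w_2\|_{L^q}\leq C\,\|w_1-w_2\|_{\exp L^p}$. For each of the two product terms, H\"older's inequality with exponents $2q$ and $2q$ yields
\[
\big\|\,|w_1-w_2|\,(e^{2^{p-1}\lambda|w_i|^p}-1)\,\big\|_{L^q}\leq \|w_1-w_2\|_{L^{2q}}\;\big\|e^{2^{p-1}\lambda|w_i|^p}-1\big\|_{L^{2q}}.
\]
Again $\|w_1-w_2\|_{L^{2q}}\leq C\,\|w_1-w_2\|_{\exp L^p}$ by the same embedding (since $p\leq q\leq 2q$), while Lemma~\ref{L2.31}, applied with $2^{p-1}\lambda$ in place of $\lambda$, with $2q$ in place of $q$, and with $K=M$, shows that $\big\|e^{2^{p-1}\lambda|w_i|^p}-1\big\|_{L^{2q}}\leq \big(2^{p}\lambda q M^p\big)^{1/(2q)}\leq 1$; here the hypotheses $\|w_i\|_{\exp L^p}\leq M$ and $2^{p}\lambda q M^p\leq 1$ enter in exactly the required form. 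Collecting the three pieces gives the asserted estimate with a constant $C_q$ depending only on $q$, $p$ and $C$ (through the embedding constant $\big(\Gamma(p/q+1)\big)^{1/q}$).

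I do not expect any genuine obstacle here; the argument is essentially bookkeeping built on the three cited ingredients. The only points requiring a little care are to choose the H\"older split so that the exponential factor is measured in $L^{2q}$ — precisely the exponent for which the smallness threshold in Lemma~\ref{L2.31} collapses to the hypothesis $2^{p}\lambda q M^p\leq 1$ — and to track the convexity constant $2^{p-1}$ through the exponent so that the prefactor $e^{2^{p-1}\lambda\|v\|_{\infty}^p}$ matches the one appearing in the statement.
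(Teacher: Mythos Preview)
Your argument is correct. The paper itself does not supply a proof of this lemma; it is quoted directly from \cite[Lemma~4.4]{majdoub2018picm}, so there is no in-paper proof to compare against. That said, the approach you outline---pointwise application of \eqref{nflw}, the convexity split $|w_i+v|^p\leq 2^{p-1}(|w_i|^p+\|v\|_\infty^p)$ to extract the factor $e^{2^{p-1}\lambda\|v\|_\infty^p}$, the decomposition $e^{2^{p-1}\lambda|w_i|^p}=1+(e^{2^{p-1}\lambda|w_i|^p}-1)$, H\"older with the $L^{2q}$--$L^{2q}$ split, and then Lemma~\ref{L2.31} together with the embedding of Lemma~\ref{relaorL}(3)---is exactly the standard route and is how the cited reference proceeds. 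In particular, your observation that the H\"older exponent $2q$ is chosen precisely so that the smallness condition in Lemma~\ref{L2.31} becomes $2^{p-1}\lambda\cdot 2q\cdot M^p=2^p\lambda q M^p\leq 1$ is the key bookkeeping point, and you have it right.
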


\begin{proof}[Proof of Lemma \ref{L4.1}]
We consider
{\[Y_T:=\Big\{v\in C([0, T], \exp L^p_0(\R^d)) \cap L^{\infty}(0,T;L^{\infty}(\R^d)): \|v\|_{Y_T}\leq 2\|v_0\|_{L^p\cap L^{\infty}}\Big\},\]}
where $\|v\|_{Y_T}:=\|v\|_{L^{\infty}(0,T;L^p)}+\|v\|_{L^{\infty}(0,T;L^{\infty})}$ and $\|v_0\|_{L^p\cap L^{\infty}}:=\|v_0\|_{L^p}+\|v_0\|_{L^{\infty}}.$
Put
\[\Phi(v):=e^{-tH^{\beta}} v_0+\int_0^t e^{-(t-\tau)H^{\beta}} \, f(v(\tau)) \, d\tau.\]
 
{By combining \eqref{exp-l-p-est}, \eqref{pl2}, Proposition \ref{conexp0}, Theorem \ref{dc}, and Lemma \ref{L3.3}, it follows that $\Phi$ maps $Y_T$ into itself. Furthermore, using Theorem \ref{dc} and adopting the reasoning from \cite[Lemma 3.1]{fino2020cpaa}, we can conclude that $\Phi$ is a contraction mapping for sufficiently small $T > 0$. Applying Theorem \ref{BFP} leads to the desired result.}
\end{proof}
\begin{proof}[Proof of Lemma \ref{L4.2}]
For  $\tilde{T}>0,$ we  consider
\[W_{\tilde{T}}=\left\{ w\in C([0,\tilde{T}], \exp L^p_0(\R^d))): \|w\|_{L^{\infty}(0,\tilde{T};\exp L^p_0)} \leq 2\epsilon \right\}.\] Put
\[\tilde{\Phi}(w):=e^{-tH^{\beta}} w_0+\int_0^t e^{-(t-\tau)H^{\beta}} \, [f(w(\tau)+v(\tau))-f(v(\tau))] \, d\tau.\]
We shall prove that $\tilde{\Phi}:W_{\tilde{T}} \to W_{\tilde{T}}$ is a contraction map for sufficiently  small  $\epsilon$ and $\tilde{T}>0.$ To do that let $w_1,w_2\in W_{\tilde{T}}.$  {By \eqref{exp-l-p-est} and \eqref{pl1}, we get}
\[\|\tilde{\Phi}(w_1)-\tilde{\Phi}(w_2)\|_{\exp L^p} \leq \frac{{C}}{(\ln 2)^{\frac 1p}} \Big(\|\tilde{\Phi}(w_1)-\tilde{\Phi}(w_2)\|_{L^p}+\|\tilde{\Phi}(w_1)-\tilde{\Phi}(w_2)\|_{ L^{\infty}}\Big).\]
In view of Theorem \ref{dc} and by invoking Lemma \ref{L4.3}, we obtain
\[\|\tilde{\Phi}(w_1)-\tilde{\Phi}(w_2)\|_{ L^{\infty}} \leq C e^{2^{p-1}\lambda \|v\|_{L^\infty}^p} \tilde{T}^{1-\frac{d}{2\beta r}} \, \|w_1-w_2\|_{L^{\infty}(0,\tilde{T}:\exp L^p)},\]
where $r>0$ is an arbitrary constant such that $r>\max\{p,\frac{d}{2\beta}\}$ and $2^p\lambda r (2\epsilon)^p\leq 1.$
Similarly, we also obtain
\[\|\tilde{\Phi}(w_1)-\tilde{\Phi}(w_2)\|_{L^p}\leq C e^{2^{p-1}\lambda \|v\|_{\infty}^p} \tilde{T} \, \|w_1-w_2\|_{L^{\infty}(0,\tilde{T}, \exp L^p)}.\]
Thus by choosing $\epsilon \ll 1$ small, we infer that
\begin{align*}
\|\tilde{\Phi}(w_1)-\tilde{\Phi}(w_2)\|_{\exp L^p} &\leq C e^{2^{p-1}\lambda \|v\|_{\infty}^p} (\tilde{T}+\tilde{T}^{1-\frac{d}{2\beta r}}) \, \|w_1-w_2\|_{L^{\infty}(0,\tilde{T}:\exp L^p)}\\
&\leq \frac 12 \|w_1-w_2\|_{L^{\infty}(0,\tilde{T}:\exp L^p)},
\end{align*}
where $\tilde{T}\ll 1$ is chosen small enough such that $C e^{2^{p-1}\lambda \|v\|_{\infty}^p} (\tilde{T}+\tilde{T}^{1-\frac{d}{2\beta r}}) \leq \frac 12.$ By considering Propositions \ref{lpexp}-\ref{conexp0}, and following the arguments presented in the proof of \cite[Lemma 3.2]{fino2020cpaa}, we obtain the desired result.
\end{proof}
\begin{proof}[Proof of Theorem \ref{T1.1}] To establish Theorem \ref{T1.1}, we will utilize the results presented in Lemma \ref{L4.1} and Lemma \ref{L4.2}.
We choose $T, \epsilon,$ and $\tilde{T}$ in the following way. Let $r>\max\{p, \frac{d}{2\beta}\}$ and fix $\epsilon>0$ such that $2^p \lambda r (2\epsilon)^p\leq 1.$ In order to use Lemma \ref{L4.1}, we first decompose $u_0=v_0+w_0$ with $v_0\in C_0^{\infty}(\R^d)$ and $\|w_0\|_{\exp L^p}\leq \epsilon$ as before. Then by Lemma \ref{L4.1}, there exist a time $T>0$ and a mild solution $v\in C([0, T], \exp L^p_0(\R^d)) \cap L^{\infty}(0,T;L^{\infty}(\R^d))$ of \eqref{hexp1} such that 
$$ \|v\|_{L^{\infty}(0,T;L^p\cap L^{\infty})} \leq 2\|v_0\|_{L^p\cap L^{\infty}}.$$ 
Next we choose $\tilde{T}>0$ such that $\tilde{T}<T$ and 
\[C e^{2^{p-1}\lambda \|v\|_{L^p \cap L^\infty}^p} (\tilde{T}+\tilde{T}^{1-\frac{d}{2\beta r}}) \leq \frac 12.\]
Then by Lemma \ref{L4.2}, there exists a mild solution  $w \in C([0, \tilde{T}], \exp L^p_0(\R^d))$ of \eqref{hexp2}. Hence $u:=v+w$ is  a mild solution of \eqref{hexp} in $C([0, \tilde{T}], \exp L^p_0(\R^d))$.  This proves the existence part.  By incorporating Theorem \ref{dc} and Proposition \ref{pnlex}, and closely following the proof methodology utilized in \cite[Theorem 1.3]{fino2020cpaa}, we can establish the uniqueness. Therefore, for brevity, we will omit the detailed explanation.
\end{proof}

\section{Proof of Theorem \ref{Th1}}
\label{S4}
\begin{proof}[Proof of  Theorem \ref{Th1}] $(1):$ 
We closely follow the approach introduced in \cite[Theorem 1.3]{fino2020cpaa} and draw inspiration from  \cite{majdoub2018picm, majdoub2018ade, ioku2015MPAG}. Specifically, we consider the associated integral equation
\begin{align} \label{E4.1}
u(t)=e^{-tH^{\beta}} u_0+\int_0^t e^{-(t-s)H^{\beta}} \, f(u(s)) \, ds
\end{align}
where $\|u_0\|_{\exp L^p}\leq \epsilon,$ with small $\epsilon>0$ to be fixed later. The nonlinearity $f$ satisfies $f(0)=0$
and \begin{align} \label{E4.2} 
|f(u)-f(v)|\leq C |u-v|\left(|u|^{m-1}e^{\lambda |u|^p}+|v|^{m-1} e^{\lambda |v|^p}\right),
\end{align}
for some constants $C>0$ and $\lambda>0$. Here $p>1$ and $m$ is larger than $1+\frac{2 p \beta}{d}$.  
From \eqref{E4.2}, we see that
\begin{align}\label{E4.3}
  |f(u)-f(v)|\leq C |u-v| \sum_{k=0}^{\infty} \frac{\lambda^k}{k!}\left(|u|^{pk+m-1}+|v|^{pk+m-1}\right).  
\end{align}
First, we focus on the proof of Theorem \ref{Th1} (1). 

Let $M>0$ and  $$Y_M=\left\{u \in L^{\infty}\left( (0,\infty), \exp L^p(\R^d)\right) : \sup_{t>0} t^{\sigma} \|u(t)\|_{L^a}+\|u\|_{L^{\infty}\left((0,\infty), \exp L^{p}(\R^d)\right)}\leq M\right\},$$
where $a>\frac{d(m-1)}{2 \beta}\geq p$ and $\sigma=\frac{1}{(m-1)}-\frac d{2\beta a}=\frac d{2\beta}(\frac{2\beta}{d(m-1)}-\frac 1a)>0.$

Let $\rho(u,v)=\displaystyle\sup_{t>0} \left(t^{\sigma} \|u(t)-v(t)\|_{L^a}\right).$ It is easy to see that $(Y_M,\rho)$ is a complete metric space.

Now, we  define the function $\Phi$ on $Y_M$ as follows
\begin{align} \label{Emap}
\Phi[u](t)=e^{-tH^\beta}u_0+\int_0^t e^{-(t-\tau)H^\beta}\left(f(u(\tau)) \right) \, d\tau.  
\end{align}

{By Proposition \ref{lpexp} \eqref{lpexp1}, Theorem \ref{dc} and \eqref{L-q-exp-L-p}, we have}
\begin{align} \label{E4.5}
\|e^{-tH^{\beta}}u_0\|_{\exp L^p} \leq {C}\|u_0\|_{\exp L^p},
\end{align}
and 
\begin{align} \label{E4.6}
t^{\sigma} \|e^{-tH^{\beta}}u_0\|_{L^a} \leq {C}t^{\sigma} t^{-\frac d{2\beta} (\frac{2\beta}{d(m-1)}-\frac 1a)} \|u_0\|_{L^{\frac{d(m-1)}{2\beta}}}={C}\|u_0\|_{L^{\frac{d(m-1)}{2\beta}}}\leq {C}\|u_0\|_{\exp L^{p}},
\end{align}
where we have used $1<p\leq \frac{d(m-1)}{2\beta}<a.$ 

In the subsequent analysis, we will consider and address the cases where $d>\frac{2\beta p}{p-1}$, $d=\frac{2\beta p}{p-1}$, and $d<\frac{2\beta p}{p-1}$ separately.

\subsection{The case $d>\frac{2\beta p}{p-1}$}
Let $u \in Y_{M }$. Then by Proposition \ref{lpexp} and Corollary \ref{C2.3}, we obtain for $q>\frac d{2\beta},$
\begin{eqnarray*}
\|\Phi(u)(t)\|_{\exp L^p} 
&\leq& \|e^{-tH^{\beta}}u_0\|_{\exp L^p}+\left\|\int_0^t e^{-(t-\tau)H^\beta}\left(f(u(\tau))\right) \, d\tau\right\|_{\exp L^p}\\
&\leq& {C}\|u_0\|_{\exp L^p}+\int_0^t \left\|e^{-(t-\tau)H^\beta}\left(f(u(\tau))\right) \, \right\|_{\exp L^p} d\tau\\
& \leq  &{C}\|u_0\|_{\exp L^p}+   \int_0^t \kappa(t-\tau) \,  \|f(u(\tau)\|_{L^1\cap L^q}  d\tau\\
& \leq  &{C}\|u_0\|_{\exp L^p}+  \,  \|f(u(\tau)\|_{L^{\infty}(0,\infty;(L^1\cap L^q)} \,  \int_0^{\infty} \kappa(\tau) d\tau\\
& \leq  &  {C}\|u_0\|_{\exp L^p}+ C   \,  \|f(u(\tau)\|_{L^{\infty}(0,\infty;(L^1\cap L^q)},
\end{eqnarray*}
where $\kappa(\tau)$ is as in Corollary \ref{C2.3}.

By \eqref{nfgw}, we have 
\begin{align*}
    |f(u)| \leq C|u|^m(e^{\lambda |u|^p}-1) +C|u|^m,~m\geq p.
\end{align*}
{By applying H\"older's inequality and \eqref{L-q-exp-L-p}, for \(1 \leq r \leq q\) and \(m \geq p\), we obtain}
\begin{align}
 \|f(u)\|_{L^r} \leq C \|u\|^m_{\exp L^p} (\|e^{\lambda |u|^p}-1\|_{L^{2r}}+1). 
\end{align}
By applying Lemma \ref{L2.31} and considering the fact that $u\in Y_M$, we can deduce the following inequality when $2q\lambda M^p\leq 1$
\begin{align}
    \|f(u)\|_{L^{\infty}(0,\infty; L^r)} \leq C M^m.
\end{align}
Finally, we obtain that
\begin{align*}
\|\Phi(u)\|_{L^{\infty}(0,\infty,\exp L^p)} \leq {C}\|u_0\|_{\exp L^p} +C M^m\leq {C}\epsilon +C M^m.
\end{align*}
Let $u,v$ be two elements of $Y_M.$ By using \eqref{E4.3} and Proposition \ref{lpexp}, one gets
\begin{align} \label{E4.100}
t^{\sigma}\|\Phi(u)(t)-\Phi(v)(t)\|_{L^a}\leq C\rho(u,v) \sum_{k=0}^{\infty}(C\lambda)^k M^{pk+m-1}.
\end{align}
Indeed,
\begin{align*}
& t^{\sigma}\|\Phi(u)(t)-\Phi(v)(t)\|_{L^a}\leq t^{\sigma} \int_0^t \left\|e^{-(t-\tau)H^{\beta}}(f(u(\tau))-f(v(\tau))) \, \right\|_{L^{a}} \, d\tau \\
   &\leq C \sum_{k=0}^{\infty} \frac{\lambda^k}{k!} t^{\sigma} \int_0^t (t-\tau)^{-\frac d{2\beta}(\frac 1r-\frac 1a)} \|(u-v)(|u|^{pk+m-1}+|v|^{pk+m-1})\|_{L^r} \, d\tau,
\end{align*}
$1\leq r\leq a.$
Applying  H\"{o}lder's inequality, we obtain
\begin{align*}
 & t^{\sigma}\|\Phi(u)(t)-\Phi(v)(t)\|_{L^a} \\
   &\leq C \sum_{k=0}^{\infty} \frac{\lambda^k}{k!} t^{\sigma} \int_0^t (t-\tau)^{-\frac d{2\beta}(\frac 1r-\frac 1a)} \|(u-v)\|_{L^a}\|(|u|^{pk+m-1}+|v|^{pk+m-1})\|_{L^q} \, d\tau\\
   &\leq C \sum_{k=0}^{\infty} \frac{\lambda^k}{k!} t^{\sigma} \int_0^t (t-\tau)^{-\frac d{2\beta}(\frac 1r-\frac 1a)} \|(u-v)\|_{L^a}[\|u\|_{L^{q(pk+m-1)}}^{pk+m-1}+\|v\|_{L^{q(pk+m-1)}}^{pk+m-1}] \, d\tau.
\end{align*}
Using interpolation inequality with $\frac{1}{q(pk+m-1)}=\frac{\theta}{a}+\frac{1-\theta}{\rho},~0\leq \theta \leq 1$ and $p\leq \rho<\infty,$ we find that
\begin{align*}
 & t^{\sigma}\|\Phi(u)(t)-\Phi(v)(t)\|_{L^a} \\
   &\leq C \sum_{k=0}^{\infty} \frac{\lambda^k}{k!} t^{\sigma} \int_0^t (t-\tau)^{-\frac d{2\beta}(\frac 1r-\frac 1a)} \|(u-v)\|_{L^a}\\
   &\left[\|u\|_{L^{a}}^{(pk+m-1)\theta} \, \|u\|_{L^{\rho}}^{(pk+m-1)(1-\theta)}+\|v\|_{L^{a}}^{(pk+m-1)\theta} \, \|v\|_{L^{\rho}}^{(pk+m-1)(1-\theta)}\right] \, d\tau.
\end{align*}
Owing to Lemma \ref{relaorL}, we infer
\begin{align*}
 & t^{\sigma}\|\Phi(u)(t)-\Phi(v)(t)\|_{L^a} \\
   &\leq C \sum_{k=0}^{\infty} \frac{\lambda^k}{k!} t^{\sigma} \int_0^t (t-\tau)^{-\frac d{2\beta}(\frac 1r-\frac 1a)} \|(u-v)\|_{L^a} \Gamma\left(\frac{\rho}{p}+1\right)^{\frac{(pk+m-1)(1-\theta)}{\rho}}\\
   &\left[\|u\|_{L^{a}}^{(pk+m-1)\theta} \, \|u\|_{\exp L^p}^{(pk+m-1)(1-\theta)}+\|v\|_{L^{a}}^{(pk+m-1)\theta} \, \|v\|_{\exp L^p}^{(pk+m-1)(1-\theta)}\right] \, d\tau.
\end{align*}
Since $u,v \in Y_M$, we obtain
\begin{align*}
 & t^{\sigma}\|\Phi(u)(t)-\Phi(v)(t)\|_{L^a} \\
   &\leq C \rho(u,v) \, \sum_{k=0}^{\infty} \frac{\lambda^k}{k!} \Gamma\left(\frac{\rho}{p}+1\right)^{\frac{(pk+m-1)(1-\theta)}{\rho}} \, M^{pk+m-1}\\
   &\times t^{\sigma} \left(\int_0^t (t-\tau)^{-\frac d{2\beta}(\frac 1r-\frac 1a)}\, \tau^{-\sigma(1+(pk+m-1)\theta)} \, d\tau \right)\\
   &\leq C \rho(u,v) \, \sum_{k=0}^{\infty} \frac{\lambda^k}{k!} \Gamma\left(\frac{\rho}{p}+1\right)^{\frac{(pk+m-1)(1-\theta)}{\rho}} \, M^{pk+m-1}\\
   &\times \mathcal{B}\left(1-\frac d{2\beta}\left(\frac 1r-\frac 1a\right),1-\sigma(1+(pk+m-1)\theta)\right),
\end{align*}
where the parameters $a,q,r,\theta=\theta_k,\rho=\rho_k$ are given in Lemma \ref{L2.3}. For these parameters one see that
\[\mathcal{B}\left(1-\frac d{2\beta}\left(\frac 1r-\frac 1a\right),1-\sigma(1+(pk+m-1)\theta)\right) \leq C\]
and 
\[\Gamma\left(\frac{\rho_k}{p}+1\right)^{\frac{(pk+m-1)(1-\theta_k)}{\rho_k}} \leq C^k \, k!.\]
Combining the above estimates we obtain \eqref{E4.100}.
Hence, we get for $M$ small,
\[\rho(\Phi(u),\Phi(v)) \leq C M^{m-1} \, \rho(u,v)\leq \frac 12 \, \rho(u,v).\]
The above estimates show that $\Phi: Y_M \to Y_M$ is a contraction mapping for $\epsilon>0$ and $M$ sufficiently small. 
{By Theorem \ref{BFP}, we conclude that there exists a unique \(u \in Y_M\) such that \(\Phi(u) = u\).} 
By \eqref{Emap}, $u$ solves the integral equation \eqref{E4.1} with $f$ satisfying \eqref{E4.2}. The estimate \eqref{E1.4} follows from $u \in Y_M$. This completes the proof of the existence of a global solution to \eqref{E4.1} for $d > 2\beta p/(p - 1).$
\subsection{The case $d<\frac{2\beta p}{p-1}$}
We will first establish the following two inequalities:
\begin{align}
   \left\|\int_0^t e^{-(t-\tau)H^{\beta}}f(u(\tau)) \, d\tau\right\|_{L^{\infty}(0,\infty: \exp L^p)} \leq C_1(M)
\end{align}
 and 
 \begin{align}\label{Est4.7}
   \sup_{t>0} t^{\sigma} \left\|\int_0^t e^{-(t-\tau)H^{\beta}}(f(u)-f(v)) \, d\tau\right\|_{L^{a}} \leq C_2(M) \, \sup_{\tau>0}(\tau^{\sigma}\|u(\tau)-v(\tau)\|_{L^a}),
\end{align}
where $u,~v \in Y_M$ and $C_1$ and $C_2$ are small when $M$ is small. To prove these estimates, we first note that
\begin{align}
    \left(\log ((t-\tau)^{-\frac d{2\beta}}+1)\right)^{-\frac 1p} \leq 2^{\frac 1p} (t-\tau)^{\frac d{2\beta p}} \text { for } 0\leq \tau<t-\eta^{-\frac {2\beta}d},
\end{align}
where $\eta=\inf\left\{z\geq 1:z>2 \log(1+z)\right\}.$
Thus, by Proposition \ref{lpexp} $(3)$, we obtain for $r>\frac d{2\beta}$ and $0<t\leq \eta^{-\frac {2\beta}d}$,
{\begin{eqnarray*}
    \left\|\int_0^t e^{-(t-\tau)H^{\beta}}f(u(\tau)) \, d\tau\right\|_{\exp L^p} &\leq& C  \int_0^t \, ((t-\tau)^{-\frac d{2 \beta r}}+1) 
    \|f(u(\tau))\|_{L^r\cap L^1} \, d\tau\\ &\leq& C \, \sup_{t>0}\|f(u(\tau))\|_{L^r\cap L^1}.
\end{eqnarray*}}
For $t>\eta^{-\frac {2\beta}d}$ and $1\leq q \leq p,$ we write
\begin{align*}
    &\left\|\int_0^t e^{-(t-\tau)H^{\beta}}f(u(\tau)) \, d\tau\right\|_{\exp L^p} \\
    &\leq   \int_0^{t-\eta^{-\frac {2\beta}d}} \|e^{-(t-\tau)H^{\beta}}f(u(\tau)) \|_{\exp L^p} \, d\tau + \int_{t-\eta^{-\frac {2\beta}d}}^t \|e^{-(t-\tau)H^{\beta}}f(u(\tau)) \|_{\exp L^p} \, d\tau\\
    &\leq \int_0^{t-\eta^{-\frac {2\beta}d}} (t-\tau)^{-\frac d{2\beta q}} (\log((t-s)^{-\frac d{2\beta}}+1))^{-\frac 1p} \|f(u(\tau)) \|_{L^q} \, d\tau \\
    &+ \int_{t-\eta^{-\frac {2\beta}d}}^t ((t-\tau)^{-\frac d{2\beta r}}+1)  \|f(u(\tau)) \|_{L^r \cap L^1} \, d\tau\\
    &\leq C \int_0^{t-\eta^{-\frac {2\beta}d}} (t-\tau)^{-\frac d{2\beta q}+\frac d{2\beta p}}  \|f(u(\tau)) \|_{L^q} \, d\tau +C \sup_{t>0}\|f(u(\tau))\|_{L^r\cap L^1}:=\textbf{I}+\textbf{J}.
\end{align*}
Similar to the analysis of \cite{Mohamed2021, fino2020cpaa}, we obtain for small $M$ and $u \in Y_M$, $\textbf{I}\leq C M^m$ and $\textbf{J}\leq C M^m$.

Finally, we get
\begin{align} \label{E4.23}
\left\|\int_0^t e^{-(t-\tau)H^{\beta}}f(u(\tau)) \, d\tau\right\|_{\exp L^p} \leq C M^m.
\end{align}

To estimate \eqref{Est4.7}, we again use Proposition \ref{lpexp}. This leads to
\begin{align*}
   &t^{\sigma} \left\|\int_0^t e^{-(t-\tau)H^{\beta}}(f(u)-f(v)) \, d\tau\right\|_{L^{a}} \\
   &\leq C \sum_{k=0}^{\infty} \frac{\lambda^k}{k!} t^{\sigma} \int_0^t (t-\tau)^{-\frac d{2\beta}(\frac 1r-\frac 1a)} \|(u-v)(|u|^{pk+m-1}+|v|^{pk+m-1})\|_{L^r} \, d\tau.
\end{align*}
Applying H\"{o}lder's inequality, we infer
\begin{align*}
   &t^{\sigma} \left\|\int_0^t e^{-(t-s)H^{\beta}}(f(u(\tau))-f(v(\tau))) \, d\tau\right\|_{L^{a}} \\
   &\leq C \sum_{k=0^{\infty}} \frac{\lambda^k}{k!} t^{\sigma} \int_0^t (t-\tau)^{-\frac d{2\beta}(\frac 1r-\frac 1a)} \|(u-v)\|_{L^a}\|(|u|^{pk+m-1}+|v|^{pk+m-1})\|_{L^q} \, d\tau\\
   &\leq C \sum_{k=0^{\infty}} \frac{\lambda^k}{k!} t^{\sigma} \int_0^t (t-\tau)^{-\frac d{2\beta}(\frac 1r-\frac 1a)} \|(u-v)\|_{L^a}[\|u\|_{L^{q(pk+m-1)}}^{pk+m-1}+\|v\|_{L^{q(pk+m-1)}}^{pk+m-1}] \, d\tau.
\end{align*}  
Arguing as in \cite{Mohamed2021, fino2020cpaa}, we get for small $M$,
\[ t^{\sigma} \left\|\int_0^t e^{-(t-\tau)H^{\beta}}(f(u)-f(v)) \, d\tau\right\|_{L^{a}} \leq C M^{m-1} \, d(u,v).\]
This together with \eqref{E4.23} and \eqref{E4.6} concludes the proof of global existence for dimensions $d < 2 \beta p/(p - 1)$.
\subsection{The case $d=\frac{2\beta p}{p-1}$}
Let $u,v \in Y_M$. By using \eqref{E4.3} and Proposition \ref{lpexp}, we get
\begin{align*}
&t^{\sigma} \|\Phi(u)(t)-\Phi(v)(t)\|_{L^a}\\
\leq & t^{\sigma} \int_0^t \left\| e^{-(t-\tau)H^{\beta}}(f(u)-f(v))\right\|_{L^{a}}  \, d\tau \\
\leq & t^{\sigma} \int_0^t (t-\tau)^{-\frac d{2\beta}(\frac 1r- \frac 1a)}\|(f(u(\tau))-f(v(\tau)))\|_{L^{r}}  \, d\tau \\
  \leq & C \sum_{k=0}^{\infty} \frac{\lambda^k}{k!} t^{\sigma} \int_0^t (t-\tau)^{-\frac d{2\beta}(\frac 1r-\frac 1a)} \|(u-v)(|u|^{pk+m-1})+|v|^{pk+m-1}\|_{L^r} \, d\tau,
\end{align*}
where $1\leq r\leq a.$
Applying H\"{o}lder's inequality, we infer
\begin{align*}
&t^{\sigma} \|\Phi(u)(t)-\Phi(v)(t)\|_{L^a}\\
   &\leq C \sum_{k=0}^{\infty} \frac{\lambda^k}{k!} t^{\sigma} \int_0^t (t-\tau)^{-\frac d{2\beta}(\frac 1r-\frac 1a)} \|(u-v)\|_{L^a}\|(|u|^{pk+m-1})+|v|^{pk+m-1}\|_{L^q} \, d\tau\\
   &\leq C \sum_{k=0}^{\infty} \frac{\lambda^k}{k!} t^{\sigma} \int_0^t (t-\tau)^{-\frac d{2\beta}(\frac 1r-\frac 1a)} \|(u-v)\|_{L^a}\|u\|_{L^{q(pk+m-1)}}^{pk+m-1})+\|v\|_{L^{q(pk+m-1)}}^{pk+m-1} \, d\tau.
\end{align*}
Similar calculation yields,
\begin{align} \label{E4.14}
    t^{\sigma} \|\Phi(u)(t)-\Phi(v)(t)\|_{L^a} \leq C M^{m-1} \, \sup_{\tau>0}(\tau^{\sigma}\|(u-v)\|_{L^a})=C M^{m-1} \, \rho(u,v).
\end{align}
Here, we rely on the crucial fact that $a$ satisfies $\frac{d}{2\beta} (m-1) < a < \frac{d}{2\beta} (m-1) \frac{1}{(2-m)_+}$. Now we estimate $\|\Phi(u)(t)\|_{L^{\infty}(0,\infty; \exp L^p)}$. 
By \eqref{E4.5} and \eqref{E3.2},
\begin{align*}
&\|\Phi(u)\|_{L^{\infty}(0,\infty; \exp L^p)} 
\leq \|e^{-tH^{\beta}}u_0\|_{L^{\infty}(0,\infty; \exp L^p)}+\left\|\int_0^t e^{-(t-\tau)H^\beta}\left(f(u(\tau))\right) \, d\tau\right\|_{L^{\infty}(0,\infty; \exp L^p)}\\
&\leq {C}\|u_0\|_{\exp L^p}+ \left\|\int_0^t \, e^{-(t-\tau)H^\beta}\left(f(u(\tau))\right) \, d\tau\right\|_{L^{\infty}(0,\infty;  L^{\phi})} +\left\|\int_0^t \, e^{-(t-\tau)H^\beta}\left(f(u(\tau))\right) \, d\tau\right\|_{L^{\infty}(0,\infty;  L^p)}.
\end{align*}
By using Corollary \ref{C3.2}, we obtain 
\begin{align}
 \left\|\int_0^t \, e^{-(t-\tau)H^\beta}\left(f(u(\tau))\right) \, d\tau\right\|_{L^{\infty}(0,\infty;  L^{\phi})} \leq C M^m.   
\end{align}
Owing to \eqref{E4.3} and Proposition \ref{lpexp}, we get
\[\left\|\int_0^t \, e^{-(t-\tau)H^\beta}\left(f(u(\tau))\right) \, d\tau\right\|_{L^p}\leq C \int_0^t\|f(u(\tau))\|_{L^p} \, d\tau.\]
Likewise, we obtain
\begin{align}
    \left\|\int_0^t \, e^{-(t-\tau)H^\beta}\left(f(u(\tau))\right) \, d\tau\right\|_{L^{\infty}(0,\infty;  L^{p})} \leq C M^m.
\end{align}
Therefore, we obtain
\[\|\Phi(u)\|_{L^{\infty}(0,\infty; \exp L^p)} 
\leq {C}\|u_0\|_{\exp L^p}+2C M^m.\]
By utilizing \eqref{E4.6}, we obtain the following estimate
\[t^{\sigma} \, \|\Phi(u)\|_{L^p} 
\leq \|u_0\|_{\exp L^p}+C M^m.\]
By selecting sufficiently small values for $M$ and $\epsilon$, we ensure that $\Phi$ maps $Y_M$ to itself. {Moreover, using the inequality \eqref{E4.14}, we can show that \(\Phi\) is a contraction mapping on \(Y_M\). The desired result then follows immediately from Theorem \ref{BFP}.}
\subsection{Proof of Theorem \ref{Th1}}
Let $q\geq \max (\frac d{2\beta}, p)$. By Proposition \ref{lpexp}, we write
\begin{align*}
&\|u(t)-e^{-tH^{\beta}} u_0\|_{\exp L^p} 
\leq \int_0^t\left\| e^{-(t-\tau)H^\beta}\left(f(u(\tau))\right)\right\|_{\exp L^p} \, d\tau\\
&\leq C\int_0^t\left\| e^{-(t-\tau)H^\beta}\left(f(u(\tau))\right)\right\|_{ L^p} \, d\tau + C\int_0^t\left\| e^{-(t-\tau)H^\beta}\left(f(u(\tau))\right)\right\|_{L^{\infty}} \, d\tau\\
&\leq C\int_0^t\left\|\left(f(u(\tau))\right)\right\|_{ L^p} \, d\tau + C\int_0^t(t-\tau)^{-\frac d{2\beta q}}\left\|\left(f(u(\tau))\right)\right\|_{L^q} \, d\tau.
\end{align*}
It can be readily observed that for $r=p,\, q$, we have 
\[\left\|f(u(\tau))\right\|_{L^r}\leq C \|u\|_{\exp L^p}^m.\]
Finally, 
\begin{align*}
\|u(t)-e^{-tH^{\beta}} u_0\|_{\exp L^p} 
&\leq C\int_0^t\left(C \|u(\tau)\|_{\exp L^p}^m+(t-\tau)^{-\frac d{2\beta q}} \, C \|u(\tau)\|_{\exp L^p}^m \right) d\tau\\
&\leq C t  \|u\|_{L^{\infty}(0,\infty:\exp L^p)}^m+C t^{1-\frac d{2\beta q}}  \|u\|_{L^{\infty}(0,\infty:\exp L^p)}^m\\
&\leq C_1 t+C_2 t^{1-\frac d{2\beta q}},
\end{align*}
where $C_1,~C_2>0$ are constants. Consequently, we can conclude that $\displaystyle\lim_{t\to 0} \| u(t)- e^{-tH^{\beta}} u_0\|_{\exp L^p}=0$. Additionally, it is worth noting that the convergence $u(t) \to u_0$ as $t \to 0$ in the weak$^{\star}$ topology has been established in the analysis presented in \cite{Ioku2011}. This completes the proof of Theorem \ref{Th1}.
\end{proof}
\section{Proof of Theorem \ref{Thne}}
\label{S5}
To begin, we construct an initial data set with diverging integrability properties.
\begin{Lemma}
\label{u-0}
Let $\alpha>0$, $p>1$, and
\begin{equation} \label{defuo}
u_0(x):=\left\{\begin{array}{ll} \alpha \left(-\log |x|\right)^{\frac 1p} \quad &\text{if} \quad |x|<1,\\
0 \quad &\text{if} \quad |x|\geq1.
\end{array}\right.
\end{equation}

Then, for every $\lambda>0,$ there exists some $\tilde{\alpha}>0$ such that 
\[\int_0^{\epsilon} \int_{B_r(0)} \exp(\lambda (e^{-tH}u_0)^p)dx \, dt=\infty,\]
for every $\alpha>\tilde{\alpha},~\epsilon>0,$ and $r>0,$ where $B_r(0) \subset {\R^d}$ is the ball centered at the origin with radius $r>0.$
\end{Lemma}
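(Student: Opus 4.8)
The plan is to establish a pointwise lower bound of the form $e^{-tH}u_0(x)\ge c\,\alpha(\log(1/t))^{1/p}$ on a small ball around the origin for small $t$, and then integrate the resulting negative power of $t$. The first ingredient is a Gaussian lower bound for the Mehler kernel on a bounded set: starting from the explicit convolution representation \eqref{mc} of $e^{-tH}$ and using $\sinh 2t=2t+O(t^3)$, $\cosh 2t-1=O(t^2)$ as $t\to0^+$, together with the identity $(|x|^2+|y|^2)\cosh 2t-2x\cdot y=|x-y|^2+(|x|^2+|y|^2)(\cosh 2t-1)$, I would show that there exist $t_0>0$ and $c_0,c_1>0$, depending only on $d$, such that
\[
e^{-tH}(x,y)\ \ge\ c_0\,t^{-d/2}\exp\!\Big(-\frac{|x-y|^2}{c_1 t}\Big),\qquad |x|,|y|\le1,\ 0<t\le t_0.
\]
(Alternatively, a Feynman--Kac comparison of $e^{-tH}$ with the free heat kernel on a bounded region yields the same estimate.)

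Next, since $u_0\ge0$, I keep only the part of the $y$-integral over $|y|\le\sqrt t$. For $|x|\le\sqrt t$ and $|y|\le\sqrt t$ one has $|x-y|\le2\sqrt t$, so the Gaussian factor is bounded below by $e^{-4/c_1}$, while $-\log|y|\ge\tfrac12\log(1/t)$ because $|y|\le\sqrt t<1$. Hence, with $\omega_d=|B_1(0)|$, for $0<t\le\min\{t_0,1\}$ and $|x|\le\sqrt t$,
\[
e^{-tH}u_0(x)\ \ge\ c_0 e^{-4/c_1}\,\alpha\,\Big(\tfrac12\log\tfrac1t\Big)^{1/p}\, t^{-d/2}\,|B_{\sqrt t}(0)|\ =\ c_2\,\alpha\,\Big(\log\tfrac1t\Big)^{1/p},
\]
with $c_2=c_2(d,p)>0$ independent of $t$ and $\alpha$. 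Raising to the $p$-th power gives, for $|x|\le\sqrt t$, the bound $\exp\!\big(\lambda(e^{-tH}u_0(x))^p\big)\ge t^{-\lambda c_2^{\,p}\alpha^p}$.

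Finally, given $\epsilon,r>0$, set $\epsilon'=\min\{\epsilon,t_0,1,r^2\}$, so that $B_{\sqrt t}(0)\subset B_r(0)$ whenever $0<t<\epsilon'$; then
\[
\int_0^{\epsilon}\!\!\int_{B_r(0)}\exp\!\big(\lambda(e^{-tH}u_0)^p\big)\,dx\,dt\ \ge\ \omega_d\int_0^{\epsilon'} t^{\,d/2-\lambda c_2^{\,p}\alpha^p}\,dt,
\]
which equals $+\infty$ as soon as $d/2-\lambda c_2^{\,p}\alpha^p\le-1$, i.e. for every $\alpha>\tilde\alpha:=c_2^{-1}\big((1+d/2)/\lambda\big)^{1/p}$. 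Since $\tilde\alpha$ depends only on $\lambda$ (and on $d,p$) and not on $\epsilon$ or $r$, this is exactly the assertion of the lemma.

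The only genuine obstacle is the first step: extracting from \eqref{mc} a Euclidean-Gaussian lower bound that is uniform for $0<t\le t_0$ on the bounded set $\{|x|,|y|\le1\}$. Note that such a comparison necessarily fails far from the origin because of the harmonic potential, which is precisely why $u_0$ is chosen to be supported in the unit ball. The remaining steps are elementary; one mildly delicate point is that the sharp lower bound for $e^{-tH}u_0$ is available only on the shrinking ball $|x|\le\sqrt t$ (and not on a fixed ball), which costs a harmless factor $t^{d/2}$ in the time integral and merely shifts the threshold $\tilde\alpha$.
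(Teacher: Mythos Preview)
Your argument is correct and follows essentially the same strategy as the paper: extract a Gaussian lower bound for the Mehler kernel on the bounded set $\{|x|,|y|\le 1\}$ from \eqref{mc}, obtain a pointwise lower bound $e^{-tH}u_0(x)\gtrsim \alpha(\log(1/t))^{1/p}$ on a region of size $\sqrt t$, and conclude by a divergent time integral of the form $\int_0^{\epsilon'} t^{d/2-c\alpha^p}\,dt$. The only tactical difference is that the paper restricts the $y$-integral to the shifted ball $B_{|x|}(3x)$ (so that $|y|\sim|x|$) and then integrates $x$ over the annulus $\sqrt t/2<|x|<\sqrt t$, whereas you restrict $y$ to $|y|\le\sqrt t$ and $x$ to the full ball $|x|\le\sqrt t$; your choice is slightly cleaner but leads to the same divergent integral and the same threshold structure for $\tilde\alpha$.
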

\begin{proof}
Let us recall that the Weyl symbol of the Hermite semigroup $e^{-tH}$ is given by
\begin{eqnarray}\label{mc}
  e^{-tH}f(x)=C_d(\sinh(2t))^{-\frac{d}{2}} \, e^{-\frac{\tanh t}{2}|x|^2}\left(e^{-\frac{1}{2\sinh 2t}|\cdot|^2} \ast g\right)(x)  
\end{eqnarray}
where
$g(x)=e^{-\frac{\tanh t}{2}|x|^2} \, f(x)$, see \cite[Eq. (3.3)]{bhimani2022heat}.

 Fix $1>\epsilon>0, r>0$. Let $\rho=\min\{r,\frac 14\}$. Then $B_{|x|}(3x) \subset B_1(0)$ for every $|x| < \rho.$ Therefore, for any $|x| < \rho$, we have
\begin{align*}
   e^{-tH}u_0(x)&= C_d(\sinh(2t))^{-\frac{d}{2}} \, e^{-\frac{\tanh t}{2}|x|^2} \int_{|y|<1}\left(e^{-\frac{1}{2\sinh 2t}|x-y|^2} \,  e^{-\frac{\tanh t}{2}|\cdot|^2} u_0(y) \, \right)dy\\
   &\geq C_d \, \alpha \, (\sinh(2t))^{-\frac{d}{2}} \, e^{-\frac{\tanh t}{2}|x|^2} \int_{B_{|x|}(3x)}\left(e^{-\frac{1}{2\sinh 2t}|x-y|^2} \,  e^{-\frac{\tanh t}{2}|y|^2} (-\log |y|)^{\frac 1p} \, \right)dy.
\end{align*}
For $y \in B_{|x|}(3x)$, we have $2|x|\leq |y|\leq 4|x|$ and $|x|\leq |x-y|\leq 3|x|$ and thus   
\begin{align} \label{E6.1}
   e^{-tH}u_0(x)
   &\geq C_d \, \alpha \, (\sinh(2t))^{-\frac{d}{2}} \, e^{-\frac{\tanh t}{2}|x|^2} \int_{B_{|x|}(3x)}\left(e^{-\frac{1}{2\sinh 2t}|x-y|^2} \,  e^{-\frac{\tanh t}{2}|y|^2} (-\log |y|)^{\frac 1p} \, \right)dy \notag\\
   &\geq C\, C_d \, \alpha \, \left(\frac{|x|^2}{\sinh(2t)}\right)^{\frac{d}{2}} \, e^{-\frac{\tanh t}{2}|x|^2} \, e^{-\frac{9}{2\sinh 2t}|x|^2} \,  e^{-(8\tanh t)|x|^2} (-\log (4|x|))^{\frac 1p}\notag\\
   & \geq C\,  \alpha \, \left(\frac{|x|^2}{t}\right)^{\frac{d}{2}} \, \, e^{-\frac{9}{4t}|x|^2} \,  (-\log (4|x|))^{\frac 1p}.
\end{align}
 Let $\tilde{\epsilon}=\min \{\epsilon, \rho^2\}.$ Then for any $0<t<\tilde{\epsilon}$, we have $B_{\sqrt{t}}(0) \subset B_{\rho}(0).$ Hence,
\begin{align} \label{E6.31}
   \int_0^{\epsilon} \int_{|x|<r} \exp (\lambda (e^{-tH}u_0)^p) \, dx dt
   &\geq \int_0^{\tilde{\epsilon}} \int_{\frac{\sqrt{t}}{2} <|x|<\sqrt{t}} \exp (\lambda (e^{-tH}u_0)^p) dx dt\notag\\
   & \geq \int_0^{\tilde{\epsilon}} \int_{\frac{\sqrt{t}}{2} <|x|<\sqrt{t}} \exp(-\lambda {C} \alpha^p \log (4|x|)) \, dx dt \notag\\
   &\geq {C_{\alpha,\lambda} \int_0^{\tilde{\epsilon}} t^{\frac{d}{2}-\frac{\lambda C \alpha^p}{2}} dt}=\infty,
\end{align}
for $\alpha\geq \alpha_0:=\left(\frac{d+2}{C \lambda}\right)^{\frac 1p}.$ This completes the proof of Lemma \ref{u-0}.
\end{proof}
\begin{proof}[Proof of Theorem \ref{Thne}]
Let us begin by noting that the function $u_0$ defined in \eqref{defuo} belongs to $\exp L^p(\mathbb{R}^d)$ for any $\alpha > 0$. To establish the desired result, we will proceed by assuming the contrary. Specifically, suppose that there exists $T > 0$ and a nonnegative classical solution $u \in C([0, T]; \exp L^p(\mathbb{R}^d))$ to the equation \eqref{hexp}. For any $t>0,~\tau>0,~t+\tau<T$, we have 
\[u(t+\tau)=e^{-(t+\tau)H} u_0+\int_0^{t+\tau} e^{-(t+\tau-s)H} \, f(u(s)) \, ds\geq e^{-tH}u(\tau),\]
since $u\in \exp L^p(\R^d)$ is a nonnegative classical solution to \eqref{hexp}.

Next we shall show that $u(t) \geq e^{-tH}u_0\geq 0.$ To prove that we first see that as $\tau \to 0,$ we have $u(t+\tau) \to u(t)$. Now, since 
\[e^{-\frac{|x-\cdot|^2}{2 \sinh 2t}} \in L^1({\R^d}) \cap L^{\infty}({\R^d}) \subset L^1(\log L)^{\frac 12}({\R^d}),\]
for every $x\in {\R^d}$ and $u(s)$ converges in weak$^{\star}$-topology to $u_0,$ we obtain that 
\[e^{-tH}u(s,x)= C_d(\sinh(2t))^{-\frac{d}{2}} \, e^{-\frac{\tanh t}{2}|x|^2} \int_{\R^d}\left(e^{-\frac{1}{2\sinh 2t}|x-y|^2} \,  e^{-\frac{\tanh t}{2}|\cdot|^2} u(s,y) \, \right)dy,\]
converges to
\[C_d(\sinh(2t))^{-\frac{d}{2}} \, e^{-\frac{\tanh t}{2}|x|^2} \int_{\R^d}\left(e^{-\frac{1}{2\sinh 2t}|x-y|^2} \,  e^{-\frac{\tanh t}{2}|\cdot|^2} u_0(y) \, \right)dy,\]
as $s\to 0.$ Since the initial data $u_0$ is nonnegative,  we obtain
\begin{align} \label{inq6.2}
  u(t) \geq e^{-tH}u_0\geq 0.  
\end{align}
Let us choose $\phi \in C_c^{\infty}(\R^d),~\phi\geq 0$ on $\R^d$ and $\phi\geq 1$ on $B_r(0).$ Since $u$ is a nonnegative classical solution to \eqref{hexp}, we obtain
\[\frac{d}{dt}\int_{\R^d} u\phi dx+\int_{\R^d} u(-H \phi) \, dx=\int_{\R^d} f(u) \, \phi \, dx\geq \int_{B_r(0)} f(u) \, dx.\]
Therefore integrating over $\tau\in [\sigma,T'],~0<\sigma <T'<T$, we obtain
\[\int_{\R^d} u(T')\phi dx-\int_{\R^d} u(\sigma)\phi dx+\int_{\sigma}^{T'} \int_{\R^d} u(-H \phi) \, dx d\tau\geq \int_{\sigma}^{T'} \int_{B_r(0)} f(u(\tau)) \, dx d\tau.\]
Since $u\in L^{\infty}(0,T';\exp L^2(\R^d))$ and ${u}(t) \to u_0$ in weak$^{\star}$ topology, by letting $\sigma \to 0$, we obtain
\[\int_{\R^d} u(T')\phi dx-\int_{\R^d} u_0\phi dx+\int_{0}^{T'} \int_{\R^d} u(-H \phi) \, dx d\tau\geq \int_{0}^{T'} \int_{B_r(0)} f(u(\tau)) \, dx d\tau.\]
Hence 
\begin{align} \label{E6.4}
    \int_0^{T'} \int_{B_r(0)}f(u(\tau)) \, dx d\tau <\infty.
\end{align}
Now, thanks to assumption \eqref{liminf}, there are some positive constants $C>0$ and $\eta_0>0$ such that
\begin{align}\label{E6.5}
    f(\eta) \geq C\,e^{\lambda \eta^p}, \;\;\;\forall\;\; \eta>\eta_0.
\end{align}
Let us choose  $\rho<r$ and $\tilde{\epsilon}<T'$ as in the proof of  Lemma \ref{u-0}. Referring back to the proof of the previous Lemma and utilizing \eqref{E6.1}, we can deduce the following result  
\begin{align}
    e^{-tH}u_0(x)\geq C\left(\log \frac{1}{4\sqrt{t}}\right)^{\frac 1p}\geq \eta_0
\end{align} \label{E6.6}
if $t>0$ is small enough and $x\in B_{\sqrt{t}}(0)\setminus B_{\sqrt{t}/2}(0)\subset B_{\rho}(0).$

Finally by \eqref{inq6.2}, \eqref{E6.5} and \eqref{E6.6}, we obtain that
\[ \int_0^{T'} \int_{B_r(0)}f(u(\tau)) \, dx d\tau  \geq C\,\int_0^{\tilde{\epsilon}} \int_{\sqrt{t}/2\leq |x| \leq \sqrt{t}}\exp\left(\lambda (e^{-tH}u_0)^p\right) \, dx d\tau ,\]
which contradicts \eqref{E6.4} and \eqref{E6.31} if $\alpha>0$ is sufficiently large. This finishes the proof of Theorem \ref{Thne}.
\end{proof}
\section{{Conclusion and Open Problems}}
\label{Conc}
Evolution partial differential equations with exponential nonlinearities arise naturally in a wide range of real-world applications, where the growth or decay of physical, biological, or chemical quantities is governed by nonlinear mechanisms. For detailed discussions, see, among others, \cite{Boyd, Biology1, Lam77, Biology, Williams}.

In this paper, we have investigated the Cauchy problem for a heat equation involving a fractional harmonic oscillator and an exponential nonlinearity. We established local well-posedness in appropriate Orlicz spaces and obtained global existence results for small initial data in these spaces. Additionally, we provided precise decay estimates for large time, highlighting the influence of the nonlinearity's behavior near the origin. Furthermore, we demonstrated that the existence of local nonnegative classical solutions is not guaranteed for certain nonnegative initial data in the appropriate Orlicz space.

Our results generalize earlier studies on heat equations with polynomial nonlinearities \cite{bhimani2022heat} to the case of exponential nonlinearities, which play a pivotal role in numerous physical models, particularly those describing self-trapped beams in plasma. By employing Orlicz spaces, we were able to effectively address the challenges introduced by the exponential terms and derive results that parallel those of the classical heat equation \cite{majdoub2018ade, majdoub2018picm, Mohamed2021}. This approach not only broadens the scope of existing theory but also provides a robust framework for analyzing exponential nonlinearities in similar contexts.

Despite these advancements, several open problems remain. The range of parameters for which the decay estimates hold could potentially be improved, and further investigation is needed to determine the optimal conditions under which these estimates are valid. While we focused on exponential nonlinearities, it would be interesting to explore the behavior of solutions for other types of nonlinearities, such as those with different growth rates or more complex structures. Extending our results to more general operators, including those with variable coefficients or broader classes of potentials, could provide deeper insights into the behavior of solutions in more complex and physically realistic settings.

Understanding the conditions under which solutions blow up in finite time, especially for large initial data, is an important direction for future research. This could involve studying the interplay between the nonlinearity and the fractional harmonic oscillator.

\end{document}